\documentclass[12pt,reqno]{amsart}

\usepackage{amsmath}
\usepackage{amsthm}
\usepackage{amssymb,amsfonts}

\usepackage{enumerate}
\usepackage{enumitem}
\usepackage{mathrsfs}
\usepackage{graphicx}
\usepackage{bm}
\usepackage{microtype}

\usepackage[colorlinks,linkcolor=blue,
anchorcolor=blue,citecolor=blue]{hyperref}

\usepackage[numbers,sort&compress]{natbib}

\usepackage{aliascnt}
\usepackage[nameinlink,noabbrev]{cleveref}

\usepackage{marvosym}
\usepackage{epstopdf}
\usepackage{caption}
\usepackage{multicol}
\usepackage{multirow}
\usepackage{makebox}
\usepackage{subfigure}
\usepackage{dsfont}

\IfFileExists{bbm.sty}{\usepackage{bbm}}{\let\mathbbm\boldsymbol}

\allowdisplaybreaks

\numberwithin{equation}{section}

\newtheorem{theorem}{Theorem}[section]

\newaliascnt{lemma}{theorem}
\newtheorem{lemma}[lemma]{Lemma}
\aliascntresetthe{lemma}

\newaliascnt{proposition}{theorem}

\aliascntresetthe{proposition}

\newaliascnt{corollary}{theorem}

\aliascntresetthe{corollary}

\theoremstyle{definition}

\newaliascnt{remark}{theorem}

\aliascntresetthe{remark}

\newaliascnt{definition}{theorem}
\newtheorem{definition}[definition]{Definition}
\aliascntresetthe{definition}

\newcommand{\R}{\mathbb R}

\newcommand{\Sym}{\operatorname{Sym}}
\newcommand{\SL}{\operatorname{SL}}
\newcommand{\GL}{\operatorname{GL}}

\title{$\mathrm{SL}(n)$-Covariant Symmetric Tensor Valuations on Polytopes}

\author[Zeng and Cheng]{Chunna Zeng$^{*}$ and Xiaoyu Cheng}

\address{1. School of Mathematical Sciences,
Chongqing Normal University,
Chongqing 401331,
People's Republic of China}

\email{zengchn@163.com}

\address{2. School of Mathematics and Sciences,
Chongqing Normal University,
Chongqing 401331,
People's Republic of China}

\email{15609089289@163.com}

\thanks{The first author is supported by National Natural Science Foundation of China
(Grant No. 12141101) and the Science and Technology Research Program of
Chongqing Municipal Education Commission
(Grant No. KJZD-K202500503)}

\thanks{{\it Keywords}: symmetric tensor, valuations,
$\mathrm{SL}(n)$ covariance, polytopes.}

\thanks{{*}Corresponding author: Chunna Zeng}

\begin{document}

\maketitle

\begin{abstract}

All $\operatorname{SL}(n)$-covariant symmetric tensor valuations on
convex polytopes in $\mathbb R^n$  are completely
classified without any continuity assumptions. It is shown that the only such valuation is the moment tensor.
\end{abstract}

\section{Introduction}

The study and classification of geometric notions compatible with
transformation groups are important tasks in geometry, as proposed in Felix
Klein's Erlangen program in 1872. Many functions defined on geometric
objects satisfy the inclusion-exclusion principle, leading naturally to the study of valuations.  Let
$\mathcal S$ be a family of sets and $\mathcal{A}$  an abelian semigroup. A map
$Z:\mathcal S\to\langle \mathcal{A},+\rangle$  is called a valuation if
\[
Z(P_1)+Z(P_2)=Z(P_1\cup P_2)+Z(P_1\cap P_2)
\]
whenever $P_1$, $P_2$, $P_1\cap P_2$, $P_1\cup P_2\in \mathcal S$.

Valuation theory has its roots in Dehn's solution of Hilbert's third problem. 
Nearly half a century later, Hadwiger initiated a systematic study of valuations by his celebrated characterization theorem \cite{Hadwiger1957},
 % A landmark in its development
%is Hadwiger’s characterization theorem \cite{Hadwiger1957}, 
which states that every continuous, rigid-motion invariant, real-valued valuation on convex bodies (i.e., compact convex sets) in $\mathbb R^n$ is a linear combination of the intrinsic volumes. This theorem provides an effective tool for proving many important results in integral geometry and geometric probability. Valuation theory has also developed close connections 
with integral geometry, geometric measure theory, and differential geometry (see \cite{Alesker2010Manifolds,AleskerBernig2017, AleskerFu1,JensenKiderlen2017,Klain1,LudwigMussnig2023}).
In recent decades, Ludwig has made significant contributions to its development (see \cite{Ludwig2002Origin,Ludwig2002Projection,Ludwig2003,Ludwig2002Moment,LudwigMussnig2023,LudwigReitzner2017,LudwigSilverstein2017,LudwigReitzner2010}).

Tensor valuations take values in finite-dimensional tensor spaces. 
The study of tensor valuations was initiated by McMullen
\cite{McMullen1997} and attracted increasing attention following
Alesker's breakthrough \cite{Alesker1999Rotation}. The techniques
developed in \cite{Alesker1999Rotation}  enabled Alesker to obtain  a long-sought-after
characterization of the Minkowski tensors compatible with rigid motions
\cite{Alesker1999Isometry}. Tensor valuations have since been studied extensively (see 
\cite{AbardiaEtAl2019,BernigHug2018,BoroczkyDomokosSolanes2021,
HugSchneider2014,Ludwig2003,LudwigSilverstein2017}).

A classical example of a tensor valuation is the $p$-th moment tensor. 
For $n\geq2$ and an integer \(p\geq 0\),  let   $(\mathbb R^n)^{\otimes p}$ denote the
$p$-fold tensor product of $\mathbb R^n$ and
$\Sym^p(\mathbb R^n)$ its subspace of symmetric tensors, with $(\mathbb R^n)^{\otimes 0}
=\Sym^0(\mathbb R^n)=\mathbb R$. Denote by $\mathcal P^n$  the space of convex polytopes
in $\mathbb R^n$.
The $p$-th moment tensor of $P\in \mathcal P^n$ is defined by
\[
M^{p,0}(P)=\int_P x^{\odot p}\,dx,
\]
where $x^{\odot p}$ denotes the $p$-fold symmetric tensor
product of $x\in\mathbb R^n$.  Moment tensors are closely related to 
Brunn-Minkowski theory and its $L_p$ extensions
(see \cite{Lutwak1996,LutwakYangZhang2000Affine,
HaberlSchuster2009}). For even integers $p\geq 2$,
the $p$-th moment tensor determines the associated
$L_p$ moment body up to normalization. 
For $p=1$ and $p=2$, this yields the moment vector and
the moment matrix, respectively (see
\cite{Lutwak1996,LutwakYangZhang2000Affine,HaberlSchuster2009,HP17,ZZ24,ZL26}).
In particular,
the moment matrix $M^{2,0}(P)$ determines the
Legendre ellipsoid of $P$ when $P$ has nonempty
interior (see \cite{Lutwak1996,LutwakYangZhang2000Affine,
Ludwig2003,MilmanPajor1989}).

Moment tensors also play an important role in affine tensor valuation theory. 
Building on the pioneering work of Ludwig \cite{Ludwig2002Moment,Ludwig2002Origin,Ludwig2003}
and Ludwig and Reitzner \cite{LudwigReitzner2010}, Haberl and Parapatits \cite{HP14,HP16,HP17}
established a classification of $\SL(n)$-covariant
symmetric tensor valuations in the centro-affine setting.  
Besides $M^{p,0}$, the centro-affine classification
involves the tensor $M^{0,p}$ associated with the cone-volume measure, as defined in \cite{HP17}. Denote by $\mathcal P_{(o)}^n$  the space of convex
polytopes containing the origin in their interiors.
Their classification takes the following form.

\begin{theorem}[\cite{HP17}]
Let $p\ge2$ and $n\ge3$. A map
$Z:\mathcal P^n_{(o)}\to\Sym^p(\mathbb R^n)$
is a measurable $\SL(n)$-covariant valuation if and only if there exist
constants $a_1,a_2\in\mathbb R$ such that
\[
Z(P)
=
a_1M^{p,0}(P)+a_2M^{0,p}(P^*)
\]
for all $P\in\mathcal P^n_{(o)}$, where
$P^*$ denotes the polar body of $P$.
\end{theorem}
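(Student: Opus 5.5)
The plan is to prove the two directions separately, with almost all the work in the "only if" direction.

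For the "if" direction I would verify directly that $M^{p,0}$ and $P\mapsto M^{0,p}(P^*)$ have the required properties. Measurability is clear. For the valuation property, $M^{p,0}$ is an integral over $P$, so it is additive. For $M^{0,p}(P^*)$, I would write it as a sum over the facets of $P^*$, equivalently over the vertices of $P$, of $h_{P^*}(u)\,V_{n-1}(F(P^*,u))\,u^{\odot p}$ with suitable normalization. The valuation property then follows from the standard inclusion--exclusion behaviour of polarity on $\mathcal P^n_{(o)}$: when $P_1\cup P_2$ is convex, $(P_1\cup P_2)^*=P_1^*\cap P_2^*$ and $(P_1\cap P_2)^*=P_1^*\cup P_2^*$. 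Covariance $Z(\phi P)=\phi^{\otimes p}Z(P)$ for $\phi\in\SL(n)$ follows from the change of variables $x\mapsto\phi x$ and from $(\phi P)^*=\phi^{-t}P^*$.

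For the converse I would follow the Ludwig--Haberl--Parapatits strategy. First, I would show that a measurable $\SL(n)$-covariant valuation on $\mathcal P^n_{(o)}$ is determined by its values on simplices of the form $[0,e_1,\dots,e_n]$ scaled and $\SL(n)$-transformed, plus its values on the polytopes $T^s$ (origin-symmetric or double-pyramid type). Concretely, I would use triangulations into simplices having the origin as a vertex. These simplices are not in $\mathcal P^n_{(o)}$, so the valuation has to be extended to $\mathcal P^n_o$ via an inclusion--exclusion formula through the origin. Restricted to $sT^n$, $\SL(n)$ covariance combined with invariance under the stabilizer of $T^n$ forces $Z(sT^n)$ to lie in a small space of tensors. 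I would determine this space using the maps $\phi\in\SL(n)$ permuting the vertices of $T^n$, together with the two-parameter family of maps splitting $T^n$ by a hyperplane through $0$ and $e_1+\lambda(e_2-e_1)$. The splitting relation
\[
Z(sT^n)+Z(s\,T^n\cap H)=Z(s\phi_\lambda T^n)+Z(s\psi_\lambda T^n)
\]
gives a Cauchy-type functional equation in $\lambda$ and in $s$. Measurability then yields the solutions: homogeneity of degree $n+p$ in $s$ corresponds to $M^{p,0}$, and degree $-n$ corresponds to the polar term.

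The main obstacle is solving this functional equation for the coefficients of $Z(sT^n)$ in the monomial basis $e_{i_1}\odot\cdots\odot e_{i_p}$. The splitting involves tensor powers of $\phi_\lambda$, which mix indices, so I would first reduce using the permutation symmetries to a few unknown coefficient functions. I would then plug in the splitting and compare coefficients to obtain additive Cauchy equations. The hypothesis $n\ge3$ enters here: with three coordinates available, one can choose a coordinate untouched by the splitting and extract the extra relations that kill spurious solutions, which do exist in the plane. After finding $Z(sT^n)=a_1M^{p,0}(sT^n)+a_2M^{0,p}((sT^n)^*)$ in the extended sense, the difference valuation vanishes on all simplices with a vertex at $0$. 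Hence it vanishes on $\mathcal P^n_{(o)}$ by triangulation and the inclusion--exclusion principle.
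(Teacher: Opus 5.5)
The paper does not prove this statement: it is quoted from Haberl--Parapatits \cite{HP17} as background, and the authors say their own results do not imply it. Judged on its own, your proposal has a genuine gap at its central step, the extension of $Z$ from $\mathcal P^n_{(o)}$ to $\mathcal P^n_o$.

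To write the splitting relation for $sT^n$, and to turn $Z(s\phi_\lambda T^n)$ and $Z(s\psi_\lambda T^n)$ back into transforms of values on scaled copies of $T^n$, the extended map must again be an $\SL(n)$-covariant valuation. But then Theorem~\ref{thm:main} of this paper applies to it. Its hypotheses are only $n\ge3$ and $p\ge1$, with no regularity, and its argument uses only simplices with a vertex at $o$, their dissections and triangulations, which is exactly what your extension is meant to supply. So the extension equals $cM^{p,0}$, and hence $Z=cM^{p,0}$ on $\mathcal P^n_{(o)}$. Now $P\mapsto M^{0,p}(P^*)$ is a nonzero $\SL(n)$-covariant valuation on $\mathcal P^n_{(o)}$, homogeneous of degree $p-n$ (not $-n$, as you wrote), while $M^{p,0}$ has degree $n+p$. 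So it admits no such extension, and your reduction would ``prove'' $a_2=0$.

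You can also see the obstruction directly. The polar $(sT^n)^*=\{y:y_i\le 1/s,\ 1\le i\le n\}$ is unbounded. Its facets $\{y\in(sT^n)^*:y_i=1/s\}$, dual to the vertices $se_i$, are unbounded, so the corresponding cone volumes are infinite. Hence ``$M^{0,p}((sT^n)^*)$ in the extended sense'' has no meaning, and correspondingly $M^{0,p}(P^*)$ blows up as $o$ approaches $\partial P$.

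What is missing is a reduction that never leaves $\mathcal P^n_{(o)}$, so that the polar term stays visible. For example, let $H$ be a hyperplane through $o$ with unit normal $w$ and half-spaces $H^\pm$, and let $\delta>0$ be so small that $\pm\delta w\in\operatorname{int}P$. Then
\[
P=[P\cap H^+,-\delta w]\cup[P\cap H^-,\delta w],\qquad
[P\cap H^+,-\delta w]\cap[P\cap H^-,\delta w]=[P\cap H,\delta w,-\delta w],
\]
and all four polytopes lie in $\mathcal P^n_{(o)}$. Dissections of this kind give functional equations on families of polytopes with an extra parameter (the small cap across the origin), and the degree $p-n$ solution enters through that parameter. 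Solving these equations using measurability, and seeing where $n\ge3$ and $p\ge2$ are needed, is the substance of \cite{HP17}. Your sketch only asserts this part.

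Two smaller points concern the ``if'' direction. First, the facet weight must be $h_{P^*}(u)^{1-p}$, not $h_{P^*}(u)$. Indeed $h_{P^*}(u)^{1-p}V_{n-1}(F(P^*,u))\,u^{\odot p}=n\,V_n([o,F(P^*,u)])\,v^{\odot p}$, where $v=u/h_{P^*}(u)$ is the vertex of $P$ dual to the facet; with your weight the map is not $\SL(n)$-covariant. Second, the polarity identities only reduce the valuation property to that of $M^{0,p}$ on $\mathcal P^n_{(o)}$, which still needs its own argument.
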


Let $\mathcal P_o^n$ denote the class of convex polytopes containing the origin.
The results of Haberl and Parapatits
\cite{HP14,HP16,HP17} were  subsequently extended to valuations on
$\mathcal P_o^n$ and $\mathcal P^n$ without any regularity assumptions. For $p=0$, Ludwig and Reitzner
\cite{LudwigReitzner2017} established a complete 
classification of $\SL(n)$-invariant real-valued valuations on $\mathcal P_o^n$ and $\mathcal P^n$ without continuity assumptions.
For $p=1$, 
Zeng and Ma \cite{ZM18} obtained the
classification of $\SL(n)$-covariant vector-valued valuations. Furthermore, for $p=2$, Ma
\cite{Ma21}  classified 
$\SL(n)$-covariant symmetric matrix valuations.

In this paper, we aim to extend the classification of Haberl and Parapatits
\cite{HP17} to symmetric tensor valuations on $\mathcal P_o^n$ and $\mathcal P^n$
without regularity assumptions. Thus, we enlarge the domain from
$\mathcal P_{(o)}^n$ to $\mathcal P_o^n$ and
$\mathcal P^n$ while removing the measurability
assumption.
This problem belongs to the broader study of geometric
functionals compatible with affine transformations, an important theme
in convex geometry (see
\cite{Haberl2012,HP14,Ludwig2005,LudwigReitzner2010,
SchusterWannerer2012,HenkelWannerer2025}).
Note that our results do not, however, imply
the classification of Haberl and Parapatits \cite{HP14,HP16,HP17}.
Our first main result gives a complete classification
on polytopes containing the origin.

\begin{theorem}\label{thm:main}
Let \(n\ge3\) and let \(p\ge1\) be an integer. A map $Z:\mathcal P_o^n\longrightarrow\operatorname{Sym}^p(\mathbb R^n)$
is an \(\operatorname{SL}(n)\)-covariant valuation if and only if there exists a constant \(c\in\mathbb R\) such that
\begin{equation}\label{eq:main-class}
 Z(P)=c\,M^{p,0}(P)
\end{equation}
for every \(P\in\mathcal P_o^n\).
\end{theorem}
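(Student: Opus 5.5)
We propose to follow the standard scheme of Ludwig--Reitzner, Zeng--Ma and Ma: first show that $Z$ is determined by its values on simplices with a vertex at the origin, then pin down those values by $\SL(n)$-covariance, and finally identify what is left with $c\,M^{p,0}$. Covariance means $Z(\phi P)=\phi^{\otimes p}\cdot Z(P)$ for $\phi\in\SL(n)$, acting on symmetric tensors. The ``if'' direction is immediate. The map $M^{p,0}$ is a valuation because integration is additive. It is $\SL(n)$-covariant because $\int_{\phi P}x^{\odot p}\,dx=\int_P(\phi y)^{\odot p}\,dy$ when $\det\phi=1$.

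For the ``only if'' direction, set $T^d=[0,e_1,\dots,e_d]$ for $0\le d\le n$. Every polytope in $\mathcal P_o^n$ can be triangulated into simplices having $o$ as a vertex. By the inclusion--exclusion principle, which holds for valuations on $\mathcal P_o^n$ as in Ludwig--Reitzner, $Z$ is then determined by its values on simplices $[o,v_1,\dots,v_d]$. Each such simplex is an $\SL(n)$-image of $sT^d$ for some $s>0$, where $s$ matters only when $d=n$. So it suffices to determine $Z(sT^d)$.

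\emph{Lower-dimensional simplices.} Take $d<n$. First consider $d\le n-2$. Any $\phi\in\SL(n)$ fixing $e_1,\dots,e_d$ leaves $T^d$ fixed, so $Z(T^d)$ is invariant under the stabilizer of $\mathrm{span}\{e_1,\dots,e_d\}$. For example, one can use maps $e_j\mapsto \lambda e_j$, $e_k\mapsto\lambda^{-1}e_k$ with $j,k>d$, together with shears $e_j\mapsto e_j+te_i$. Testing a coefficient of $Z(T^d)$ against these maps forces every component to vanish. Here $n\ge3$ is used, so that two free directions exist; $p\ge1$ excludes invariant scalars.

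Next take $d=n-1$. Here the stabilizer contains $e_n\mapsto \lambda e_n+\sum_{i<n}t_ie_i$, with a diagonal compensation on the first $n-1$ coordinates and the symmetry group of $T^{n-1}$. I expect that we again obtain $Z(sT^{n-1})=0$. If some component survives, we use the valuation property: cut $T^{n-1}$ by the hyperplane $x_1=x_2$. This gives a functional equation between $Z(T^{n-1})$ and the $\SL(n)$-images of $Z(T^{n-1})$, and it should kill the remainder.

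\emph{Full-dimensional simplices.} The main obstacle is $d=n$, where $f(s)=Z(sT^n)$ has no a priori regularity. Let $\lambda\in(0,1)$ and dissect $sT^n$ by the hyperplane $H_\lambda$ through $o,e_3,\dots,e_n$ and $(1-\lambda)e_1+\lambda e_2$. Both pieces are $\SL(n)$-images of rescaled standard simplices, namely $\phi_\lambda sT^n$ and $\psi_\lambda sT^n$ with determinants $\lambda$ and $1-\lambda$ absorbed into $s$. Their intersection lies in dimension $n-1$, and $Z$ vanishes there by the previous step. This gives the Ludwig--Reitzner type functional equation
\[
f(s)=\phi_\lambda^{\otimes p}\,f(\lambda^{1/n}s)+\psi_\lambda^{\otimes p}\,f((1-\lambda)^{1/n}s),
\]
with the actions made precise by the chosen linear maps.

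First, the stabilizer of $T^n$ (permutation matrices of determinant $1$, and more generally maps permuting the vertices) restricts $f(s)$ to symmetric tensors invariant under the even permutations of the coordinates. Then we compare coefficients in the equation and use its dependence on $\lambda$. Writing the components of $f(s)$ in the monomial basis, we aim to show that they satisfy Cauchy-type equations. Assuming no measurability, these equations are still forced to be linear by a second dissection, namely cutting along the coordinates $e_1,e_3$. Comparing the two dissections should give $f(s)=c\,s^{n+p}M^{p,0}(T^n)$, which is exactly the homogeneity of the moment tensor.

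I expect this tensor bookkeeping, which must be done without regularity, to be the hardest step. I would do it first for $p=1$ and $p=2$, following Zeng--Ma and Ma, and then handle general $p$ by induction on multi-index degree, using the shear invariance of the interior components. Finally, $Z-c\,M^{p,0}$ vanishes on all $sT^d$, hence on all of $\mathcal P_o^n$.
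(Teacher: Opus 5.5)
\textbf{The lower-dimensional step.} As stated, this step fails. Invariance of $Z(T^d)$ under the maps in $\SL(n)$ that fix $e_1,\dots,e_d$ gives only $Z(T^d)\in\Sym^p(\operatorname{span}\{e_1,\dots,e_d\})$ and $Z(sT^d)=s^pZ(T^d)$; this is \Cref{lem3.1}. Every tensor in that subspace is fixed by all such maps. For example, $e_1^{\odot p}+\dots+e_d^{\odot p}$ is invariant under the whole stabilizer of $T^d$ in $\SL(n)$. So no test of coefficients can force vanishing. Also, a diagonal compensation on $e_1,\dots,e_{n-1}$ does not preserve $T^{n-1}$.

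Vanishing must therefore come from the valuation property, and there the picture is different from what you expect. Cutting $T^k$, $k<n$, by $H$ gives
\[
F_k(x_1,x_2,x')+F_{k-1}(t,x')=F_k(t,x_2,x')+F_k(x_1,t,x'),\qquad t=\lambda x_1+(1-\lambda)x_2 .
\]
Its solutions begin with $Z(T^1)=d\,e_1^{\odot p}$ and $Z(T^2)=\tfrac d2\bigl(e_1^{\odot p}+e_2^{\odot p}\bigr)$, with $d$ arbitrary. Only the cut of $T^3$ forces $d=0$, and $T^3$ is lower-dimensional only when $n\ge4$. So for $n\ge4$ your plan is repaired by running this recursion instead of the stabilizer argument.

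For $n=3$, the cut of $T^{n-1}=T^2$ holds for every $d$ and kills nothing. Simplicity is then not available before $sT^3$ is treated. The full-dimensional identity is not the pure one you wrote: it carries the defect term $\tfrac d2(t^p+x_3^p)$ coming from $Z(s[o,v_\lambda,e_3])$. The missing idea is \Cref{lem:n3-defect-zero}. Set $g(r)=G_r^0(1,1,1)$. Evaluating the defective identity at $(1,1,1)$ gives $g(a+b)+d=g(a)+g(b)$. Differentiating along $(1+u,1-u,1)$ at $u=0$ gives $a\bigl(g(b)-\tfrac{3d}4\bigr)=b\bigl(g(a)-\tfrac{3d}4\bigr)$; this is legitimate because each $G_r^0$ is a polynomial in $x$. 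Together these force $d=0$.

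\textbf{The full-dimensional step.} Even granting simplicity, your treatment of $sT^n$ is a hope rather than an argument. Without regularity, Cauchy-type equations for coefficients have wild solutions. You give no mechanism by which a second dissection, or a ``shear invariance of interior components'', would exclude them; shears do not preserve $T^n$.

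The paper's point is that regularity comes for free from the polynomial dependence on $x$:
\begin{enumerate}
\item With $G_r=r^{-p/n}\widehat{Z(r^{1/n}T^n)}$, the cut gives $G_{a+b}(x_1,x_2,x')=G_a(t,x_2,x')+G_b(x_1,t,x')$ with $t=(ax_1+bx_2)/(a+b)$.
\item At $x=e_n$ this makes $r\mapsto G_r(e_n)$ additive, and a permutation transfers this to $D(r)=G_r(e_2)$.
\item Evaluating at $x=e_2$ yields $D(r)\bigl(1-(\tfrac r{1+r})^p\bigr)=G_1\bigl(\tfrac r{1+r}e_1+e_2\bigr)-D(1)$. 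So $D$ is continuous, hence $D(r)=c_0r$, and $G_r=c_0rh_p$ on $\operatorname{span}\{e_1,e_2\}$ (\Cref{lem:endpoint}).
\item A solution vanishing on $\{x_n=0\}$ vanishes wherever some coordinate is $0$, by permutation symmetry. Setting $x_1=0$ in the cocycle then shows it vanishes on an open set, hence identically.
\item Induction on $n$ gives $G_r=c_0rh_p$ (\Cref{thm:poly-rigidity}), with no case analysis in $p$.
\end{enumerate}
Your even-permutation symmetry would suffice for this argument. However, the proposal contains neither this argument nor a substitute for it.
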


Next, we consider the space of all convex polytopes $\mathcal P^n$. This step is as in the
classification of convex body valued valuations by Schuster and Wannerer \cite{SchusterWannerer2012} and
Wannerer \cite{Wannerer2011}.
\begin{theorem}\label{thm:all}
Let \(n\ge3\) and let \(p\ge1\) be an integer. A map $Z:\mathcal P^n\longrightarrow\operatorname{Sym}^p(\mathbb R^n)$
is an \(\operatorname{SL}(n)\)-covariant valuation if and only if there exist constants \(c_1,c_2\in\mathbb R\) such that
\begin{equation}\label{eq:all-class}
Z(P)=c_1M^{p,0}(P)+c_2M^{p,0}([o,P])
\end{equation}
for every \(P\in\mathcal P^n\), where $[o,P]$ denotes the convex hull of $P$ and the origin.
\end{theorem}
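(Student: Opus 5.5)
The easy direction is a direct check. $M^{p,0}$ is a valuation on $\mathcal P^n$, and it satisfies $M^{p,0}(\phi P)=\phi^{\otimes p}M^{p,0}(P)$ for $\phi\in\SL(n)$, since $\det\phi=1$. For the second term, $P\mapsto[o,P]$ commutes with $\SL(n)$. If $P_1,P_2,P_1\cup P_2\in\mathcal P^n$, then
\[
[o,P_1\cup P_2]=[o,P_1]\cup[o,P_2],\qquad [o,P_1\cap P_2]=[o,P_1]\cap[o,P_2].
\]
The second identity is the standard fact used by Schuster and Wannerer for convex unions: a point of $[o,P_1]\cap[o,P_2]$ lies on a ray from $o$ meeting both $P_1$ and $P_2$, and convexity of $P_1\cup P_2$ gives a common point on that ray beyond it. Hence $P\mapsto M^{p,0}([o,P])$ is again an $\SL(n)$-covariant valuation.

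For the converse, first restrict $Z$ to $\mathcal P_o^n$. \Cref{thm:main} gives $Z=c\,M^{p,0}$ there, so in particular $Z$ vanishes on every lower-dimensional polytope containing $o$. Now take $P\in\mathcal P^n$ with $o\notin P$. Write
\[
[o,P]=P\cup\bigcup_{F}[o,F],
\]
where $F$ runs over the facets of $P$ visible from $o$. All intersections among these pieces are either sets $[o,G]$ with $G$ a lower-dimensional face, which lie in $\mathcal P_o^n$ and are lower dimensional, or faces $G$ of $P$ not containing $o$. Applying inclusion–exclusion (the valuation property extends to finite unions on polytopes by the standard argument), we obtain
\[
Z(P)=c\,M^{p,0}([o,P])-\sum_F c\,M^{p,0}([o,F])\pm(\text{terms }Z(G),\ G\text{ a face of }P,\ o\notin G).
\]
The terms $M^{p,0}([o,F])$ vanish, since each $[o,F]$ with $F$ of dimension $n-1$ and $o\notin\operatorname{aff}F$ is full dimensional only through the cone, and that cone is already accounted for in $[o,P]$; in any case these pieces lie in $\mathcal P_o^n$ and are evaluated by \Cref{thm:main}. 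So everything reduces to knowing $Z$ on lower-dimensional polytopes $G$ with $o\notin G$. The target is to show that $Z(G)=c_2M^{p,0}([o,G])$ for such $G$. This forces $Z(G)=0$ unless $\dim G=n-1$ and $o\notin\operatorname{aff}G$. Given this, with $c_1=c-c_2$, the displayed identity yields \eqref{eq:all-class} for full-dimensional $P$, and the same formula then holds for all $P$.

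To determine $Z$ on lower-dimensional $G$, use the stabilizers. For $\dim G\le n-2$, or for $G\subset H$ with $H$ a hyperplane through $o$, there is a large subgroup of $\SL(n)$ fixing $G$ pointwise or fixing its affine hull. Examples are shears $x\mapsto x+\langle u,x\rangle w$ with $u$ vanishing on $\operatorname{aff}G$, together with suitable diagonal maps. Since $n\ge3$, covariance $Z(G)=\phi^{\otimes p}Z(G)$ under this subgroup should force $Z(G)=0$; we argue by induction on dimension, as in \cite{ZM18,Ma21}.

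For $G$ in the hyperplane $\{x_n=1\}$, the matrices with last row $e_n^{T}$ act on that hyperplane as the group of volume-preserving affine maps of $\R^{n-1}$. The maps $\operatorname{diag}(t^{-1/(n-1)},\dots,t^{-1/(n-1)},t)$ move the hyperplane to $\{x_n=t\}$. So $Z$ restricted to these polytopes is a simple valuation, by the previous step, and is determined by its values on $(n-1)$-simplices $T=\operatorname{conv}\{v_1,\dots,v_n\}$. Since $\SL(n)$ acts transitively on such vertex tuples with fixed $\det(v_1,\dots,v_n)$, we get
\[
Z(T)=\phi^{\otimes p}\,Z(T_s),
\]
where $T_s$ is a standard simplex with $\det=s$. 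The permutation-type elements of the stabilizer of $T_s$ cut $Z(T_s)$ down to a small space of invariant tensors. One of these is $M^{p,0}([o,T_s])$.

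The main obstacle is the absence of continuity. Splitting a simplex by a hyperplane through a vertex, and using simplicity, yields a Cauchy-type functional equation for $s\mapsto Z(T_s)$, twisted by the tensor action. I would first try to eliminate the non-measurable solutions as in \cite{LudwigReitzner2017,ZM18}. Concretely, we compare two different dissections of the same simplex, for example through the midpoint of an edge versus a general point. The resulting equation is polynomial in the vertex coordinates, through the $\phi^{\otimes p}$-factor, and additive in $s$. Matching the polynomial structure should force the additive function to be linear, and this should pin $Z(T)$ down to $c_2M^{p,0}([o,T])$.
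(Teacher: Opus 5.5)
Your easy direction and your outer reduction (cone over the visible facets, \Cref{thm:main} on $\mathcal P_o^n$, inclusion--exclusion) agree with the paper's final step. One slip there: the terms $M^{p,0}([o,F])$ do not vanish, because for a visible facet $F$ of a full-dimensional $P$ with $o\notin P$ the cone $[o,F]$ is $n$-dimensional. The value $c_1=c-c_2$ comes precisely from keeping these terms and using $M^{p,0}([o,P])=M^{p,0}(P)+\sum_F M^{p,0}([o,F])$.

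The genuine gap is in the two inner steps. First, stabilizer covariance cannot force $Z(G)=\boldsymbol{0}$ for lower-dimensional $G$ with $o\notin G$. Shears only give $Z(G)\in\Sym^p(\operatorname{span}G)$, and any $\phi\in\SL(n)$ with $\phi G=G$ acts on $\operatorname{span}G$ by a symmetry of $G$. For $G=[e_1,\dots,e_m]$ with $m<n$, the tensor $e_1^{\odot p}+\dots+e_m^{\odot p}$ survives all of these tests. Indeed, the assignment $\{x\}\mapsto d\,x^{\odot p}$ is $\SL(n)$-equivariant for every $d$. The vanishing has to come from the valuation property. The paper cuts $[e_1,\dots,e_m]$ at $v_\lambda=\lambda e_1+(1-\lambda)e_2$ to get the recursion \eqref{eq:tilde-recursion}. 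Starting from $Z(\{e_1\})=d\,e_1^{\odot p}$, this yields $Z([e_1,e_2])=\frac d2(e_1^{\odot p}+e_2^{\odot p})$, and then $d=0$ follows from the symmetry of $\widetilde F_3$ when $n\ge4$. For $n=3$, the dissection relations among points and segments are compatible with every $d$. So the paper must pass to the triangles $s\widetilde T^2$: their dissection identity \eqref{eq:tilde-n3-identity} carries the defect $\frac d2(t^p+x_3^p)$, and evaluating and differentiating at $(1,1,1)$ gives $d=0$. Thus in dimension three the lower-dimensional vanishing is entangled with the facet analysis. Your ordering (first kill lower-dimensional values, then treat facets as a simple valuation) does not go through as stated. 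Likewise, for an $(n-1)$-polytope $G$ in a hyperplane $H\ni o$, invariance only gives $Z(G)\in\Sym^p(H)$. The paper gets $Z(G)=\boldsymbol{0}$ from $[o,G]=G\cup\bigcup_i[o,E_i]$, \Cref{thm:main}, and the codimension-two vanishing.

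Second, the facet step stays at the level of ``should''. The $\SL(n)$-stabilizer of $s\widetilde T^{n-1}$ is only the group of even permutation matrices. Its invariants, viewed as polynomials, contain all symmetric polynomials of degree $p$, and for $p\ge\binom n2$ alternating ones as well. So the space is not small, and you never rule out an odd part. What is missing is the paper's \Cref{thm:poly-rigidity}. After splitting $Z=Z^0+Z^1$ and using the codimension-two vanishing on the cut $s[v_\lambda,e_3,\dots,e_n]$, the polynomials $\widetilde G^\varepsilon_r$ satisfy the cocycle \eqref{eq:tilde-pure-cocycle} and the parity relation \eqref{eq:tilde-permutation}. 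These force $\widetilde G^0_r=\gamma_0 r h_p$ and $\widetilde G^1_r=0$, by induction on $n$, using that a solution vanishing on $\{x_n=0\}$ vanishes identically. The lack of continuity needs neither a comparison of two dissections nor measurability-type arguments. In \Cref{lem:endpoint}, the function $D(r)=G^\varepsilon_r(0,1)$ (two-variable restriction) is additive. The cocycle at $(0,1)$ with $a=1$, $b=r$ then expresses $D(r)$ rationally through the single polynomial $G^\varepsilon_1$, so $D$ is continuous, hence linear. Your remark that the polynomial structure should force linearity points in this direction. As written, however, the proposal establishes neither the lower-dimensional vanishing nor $Z(G)=c_2M^{p,0}([o,G])$ on facets.
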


\section{Preliminaries and notation}

  We work in $n$-dimensional Euclidean space $\mathbb R^n$, where
$n\ge3$. The origin and standard basis are denoted by $o$ and
$e_1,\ldots,e_n$, respectively.

\subsection{Valuations and the decomposition of $\SL(n)$-covariant valuations}
\label{sec2:polytopes-valuations}

For $P\in\mathcal P^n$, let
$\operatorname{aff}P$, $\operatorname{relint}P$, and $\dim P$
denote its affine hull, relative interior, and dimension,
respectively. For $v_1,\ldots,v_k\in\mathbb R^n$, denote by
$[v_1,\ldots,v_k]$ their convex hull.
Let $\mathcal Q^n$ stand for either $\mathcal P^n$ or
$\mathcal P_o^n$. We set $Z(\varnothing)=\boldsymbol{0}$,
where $\boldsymbol{0}$ denotes the zero tensor.
A valuation on $\mathcal Q^n$ is called \emph{simple}
if it vanishes on all polytopes of dimension less than $n$. 
Thus, we can make full use of the following inclusion--exclusion principle;
see \cite{Klain1,Parapatits}.

\begin{lemma}\label{lemma2.1}
Let $\mathcal A$ be an abelian group and let
$Z:\mathcal Q^n\to\mathcal A$ be a valuation, with
$Z(\varnothing)=0$. Then
\[
Z(P_1\cup\cdots\cup P_m)
=
\sum_{\varnothing\ne I\subseteq\{1,\ldots,m\}}
(-1)^{|I|-1}
Z\left(\bigcap_{i\in I}P_i\right)
\]
for all $P_1,\ldots,P_m\in\mathcal Q^n$ such that
$P_1\cup\cdots\cup P_m\in\mathcal Q^n$.
\end{lemma}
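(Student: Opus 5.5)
The plan is to prove the inclusion--exclusion formula by induction on the number $m$ of polytopes, in the classical way for valuations on intersectional families. The one subtlety is that $\mathcal Q^n$ is not closed under unions. So the valuation property only applies to pairs whose union is again a polytope, and we cannot simply split off $P_m$ and apply the two-set identity to $P_1\cup\cdots\cup P_{m-1}$ and $P_m$, because $P_1\cup\cdots\cup P_{m-1}$ need not be convex.

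\textbf{Route 1: extension to polyhedral sets.} The standard remedy, following Klain and Parapatits as cited, is to first extend $Z$ to the lattice $U(\mathcal Q^n)$ of finite unions of elements of $\mathcal Q^n$, and then deduce the formula there. Concretely, I would proceed in four steps.
\begin{enumerate}
\item Note that $\mathcal Q^n\cup\{\varnothing\}$ is closed under finite intersections. For $\mathcal P^n$ this is clear; for $\mathcal P_o^n$ an intersection of polytopes containing $o$ again contains $o$.
\item Define the extension by the formula itself:
\[
\bar Z(P_1\cup\cdots\cup P_m)=\sum_{\varnothing\ne I\subseteq\{1,\ldots,m\}}(-1)^{|I|-1}Z\Bigl(\bigcap_{i\in I}P_i\Bigr).
\]
\item Check that $\bar Z$ is well defined, i.e.\ independent of the chosen representation $P_1,\ldots,P_m$ of the union. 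This is the content of Groemer's extension theorem in the form of Klain--Rota. Since only finite unions of convex polytopes occur, the relevant criterion is the weak valuation property together with a triangulation/refinement argument: two representations are compared through a common refinement into polytopes, and one uses repeated cutting by hyperplanes. The key identity is $Z(P)=Z(P\cap H^+)+Z(P\cap H^-)-Z(P\cap H)$, which follows from the two-set valuation property since all three pieces lie in $\mathcal Q^n$ when $P$ does. For $\mathcal P_o^n$ one must restrict to hyperplanes through $o$, or else handle pieces not containing the origin; this is where care is needed.
\item Once $\bar Z$ is a well-defined valuation on the lattice, the formula follows by the usual induction on $m$ using distributivity $(A\cup B)\cap C=(A\cap C)\cup(B\cap C)$. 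When $P_1\cup\cdots\cup P_m\in\mathcal Q^n$, $\bar Z$ agrees with $Z$ there.
\end{enumerate}

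The main obstacle is step 3, the well-definedness, and particularly the case $\mathcal Q^n=\mathcal P_o^n$, since cutting a polytope by an arbitrary hyperplane produces pieces not containing $o$. Rather than reprove it, I would invoke the cited results (Klain; Parapatits), which establish precisely this inclusion--exclusion principle for valuations on $\mathcal P^n$ and $\mathcal P_o^n$ with $Z(\varnothing)=0$. For $\mathcal P_o^n$, Parapatits's argument uses only dissections into pieces containing the origin, obtained by coning from $o$ together with hyperplanes through $o$. I would follow that argument and verify only that the abelian-group target plays no role beyond additivity.
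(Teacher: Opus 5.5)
Your proposal takes the same route as the paper: the paper gives no proof of this lemma and simply invokes Klain and Parapatits, and deferring the well-definedness (full additivity) step to those references, as you do, is exactly that. If you want to avoid reconstructing Parapatits's argument for $\mathcal P_o^n$, note that $P\mapsto Z([o,P])$ (with $\varnothing\mapsto 0$) is a valuation on $\mathcal P^n$ extending $Z$, by the identities $[o,P\cup Q]=[o,P]\cup[o,Q]$ and $[o,P\cap Q]=[o,P]\cap[o,Q]$ proved in Lemma~\ref{lem:M1}; hence the $\mathcal P_o^n$ case follows from the $\mathcal P^n$ case, which is Volland's theorem that every valuation on polytopes satisfies inclusion--exclusion.
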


We define a \emph{triangulation} of a $k$-dimensional polytope
$P$ as a finite family of $k$-dimensional simplices
$\{T_1,\ldots,T_r\}$ with pairwise disjoint relative interiors,
such that $P=\bigcup_{i=1}^r T_i$ and every nonempty intersection
$T_{i_1}\cap\cdots\cap T_{i_j}$,
$1\le i_1<\cdots<i_j\le r$, is a simplex. Therefore, we can apply the inclusion--exclusion principle; see \cite{Parapatits}.

%\subsection{Decomposition of $\SL(n)$-covariant valuations}\label{sec2:decomposition}

Fix an integer $p\ge1$. 
A valuation
$Z:\mathcal Q^n\to\operatorname{Sym}^p(\mathbb R^n)$ is
\emph{$\SL(n)$-covariant} if
$Z(\phi P)=\phi\cdot Z(P)$
for all $P\in\mathcal Q^n$ and $\phi\in\SL(n)$.
Let $\SL^{\pm}(n)=\{\phi\in\GL(n):\det\phi=\pm1\}.$
For $\varepsilon\in\{0,1\}$, we call $Z$
\emph{$\SL^{\pm}(n)$-$\varepsilon$-covariant} if
\[
Z(\phi P)=(\det\phi)^\varepsilon\,\phi\cdot Z(P)
\]
for all $P\in\mathcal Q^n$ and $\phi\in\SL^{\pm}(n)$.

Following Haberl and Parapatits \cite{HP17}, we decompose an
$\SL(n)$-covariant valuation $Z$ as follows. Fix
$\theta\in\SL^{\pm}(n)\setminus\SL(n)$ and define
\begin{equation}\label{eq:parity}
Z^\varepsilon(P)
=\frac12\bigl(Z(P)+(-1)^\varepsilon\,
\theta\cdot Z(\theta^{-1}P)\bigr),
\qquad \varepsilon\in\{0,1\}.
\end{equation}
The $\SL(n)$-covariance of $Z$ implies that
$Z^\varepsilon$ is independent of $\theta$ and satisfies
\begin{equation}\label{eq:epsilon-cov}
Z^\varepsilon(\phi P)
=(\det\phi)^\varepsilon\,\phi\cdot Z^\varepsilon(P)
\end{equation}
for all $P\in\mathcal Q^n$ and $\phi\in\SL^{\pm}(n)$.
Thus $Z=Z^0+Z^1$ is the unique decomposition into valuations
of these two types. If $Z$ is simple, so are $Z^0$ and $Z^1$.

\subsection{The dissections of the standard simplex}
\label{sec:standard-simplices}
For $1\le k\le n$, denote by
$T^k=[o,e_1,\ldots,e_k]$ the $k$-dimensional standard simplex
and by $\widetilde T^{k-1}=[e_1,\ldots,e_k]$ a
$(k-1)$-dimensional simplex, with the convention that $T^0=\{o\}$.
Let $\mathcal T^k$ be the set of $k$-dimensional simplices with a vertex
at the origin, and $\widetilde{\mathcal T}^{k-1}$ the set of
$(k-1)$-dimensional simplices $T\subset\mathbb R^n$ with
$o\notin\operatorname{aff}T$.
Every $T\in\widetilde{\mathcal T}^{k-1}$ has the form
\begin{equation}\label{eq:tilde-simplex-normalize}
T=\phi(s\widetilde T^{k-1})
\end{equation}
for some $s>0$ and $\phi\in\SL(n)$.

Let $\lambda\in(0,1)$ and denote by $H$ the hyperplane through the origin
with normal vector $(1-\lambda)e_1-\lambda e_2$. Set
\[
H^+=\{x\in\mathbb R^n:x\cdot((1-\lambda)e_1-\lambda e_2)\ge0\},
\qquad
H^-=\{x\in\mathbb R^n:x\cdot((1-\lambda)e_1-\lambda e_2)\le0\}.
\]
Thus, $H^+$ and $H^-$ are the two closed half-spaces bounded by $H$.
This hyperplane induces a series of dissections of
$T^k$ as well as $\widetilde T^{k-1}$ for
$k=2,\ldots,n$.
Let $Z:\mathcal Q^n\to\Sym^p(\R^n)$ be an
$\SL(n)$-covariant valuation.
We distinguish two cases for these dissections. First, assume that $k<n$.
By the inclusion--exclusion principle, we get
\begin{equation}\label{eq:dissection-low}
Z(T^k)+Z(T^k\cap H)
=
Z(T^k\cap H^+)+Z(T^k\cap H^-).
\end{equation}

\begin{definition}\label{definition1}
Let $\lambda\in(0,1)$. The linear transformation
$\phi_1\in\SL(n)$ is given by
\[
\phi_1e_1=\lambda e_1+(1-\lambda)e_2,\quad
\phi_1e_2=e_2,\quad
\phi_1e_n=\lambda^{-1}e_n,\quad
\phi_1e_j=e_j\ \text{for }3\le j\le n-1,
\]
and $\psi_1\in\SL(n)$ is given by
\[
\psi_1e_1=e_1,\quad
\psi_1e_2=\lambda e_1+(1-\lambda)e_2,\quad
\psi_1e_n=(1-\lambda)^{-1}e_n,\quad
\psi_1e_j=e_j\ \text{for }3\le j\le n-1.
\]
\end{definition}

It is clear that $T^k\cap H^+=\psi_1T^k$,
$T^k\cap H^-=\phi_1T^k$, and
$T^k\cap H=\phi_1[o,e_1,e_3,\ldots,e_k]$.
Then, \eqref{eq:dissection-low} becomes
\[
Z(T^k)+Z(\phi_1[o,e_1,e_3,\ldots,e_k])
=
Z(\phi_1T^k)+Z(\psi_1T^k).
\]

Second, we consider the dissection of $sT^n$
for $s>0$. Again, by the inclusion--exclusion
principle, we have
\begin{equation}\label{eq:full-dissection}
Z(sT^n)+Z(sT^n\cap H)
=
Z(sT^n\cap H^+)+Z(sT^n\cap H^-).
\end{equation}

\begin{definition}\label{definition2}
Let $\lambda\in(0,1)$. The linear transformation
$\phi_2\in\GL(n)$ is given by
\[
\phi_2e_1=\lambda e_1+(1-\lambda)e_2,\quad
\phi_2e_2=e_2,\quad
\phi_2e_j=e_j\ \text{for }3\le j\le n,
\]
and $\psi_2\in\GL(n)$ is given by
\[
\psi_2e_1=e_1,\quad
\psi_2e_2=\lambda e_1+(1-\lambda)e_2,\quad
\psi_2e_j=e_j\ \text{for }3\le j\le n.
\]
\end{definition}

It is clear that $sT^n\cap H^+=\psi_2(sT^n)$,
$sT^n\cap H^-=\phi_2(sT^n)$, and
$sT^n\cap H=\phi_2(s[o,e_1,e_3,\ldots,e_n])$.
Then, \eqref{eq:full-dissection} becomes
\[
\begin{aligned}
Z(sT^n)+Z(\phi_2(s[o,e_1,e_3,\ldots,e_n]))
=Z(\phi_2(sT^n))+Z(\psi_2(sT^n)).
\end{aligned}
\]
Since $\lambda^{-1/n}\phi_2$ and
$(1-\lambda)^{-1/n}\psi_2$ belong to $\SL(n)$,
we obtain
\[
\begin{aligned}
Z(sT^n)
&+\lambda^{-p/n}\phi_2\cdot
Z(\lambda^{1/n}s[o,e_1,e_3,\ldots,e_n])\\
&=\lambda^{-p/n}\phi_2\cdot
Z(\lambda^{1/n}sT^n)
+(1-\lambda)^{-p/n}\psi_2\cdot
Z((1-\lambda)^{1/n}sT^n).
\end{aligned}
\]

The following two lemmas show that $\SL(n)$-covariant symmetric
tensor valuations are uniquely determined by their values on simplices.
The proofs are similar to the brief argument of
\cite[Lemmas~11 and~12]{Li2021Covariant}, or see \cite[Sections~2 and~4]{LudwigReitzner2017}
 for a similar detailed argument.

\begin{lemma}\label{lem:uniqueness-origin}
Let $Z,Z':\mathcal P_o^n\to\Sym^p(\R^n)$ be
$\SL(n)$-covariant valuations.
If $Z(sT^k)=Z'(sT^k)$ for every $s>0$ and $0\le k\le n$,
then $Z(P)=Z'(P)$ for every $P\in\mathcal P_o^n$.
\end{lemma}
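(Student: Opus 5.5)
Set $Y=Z-Z'$. Then $Y$ is an $\SL(n)$-covariant valuation on $\mathcal P_o^n$ with values in $\Sym^p(\R^n)$, and $Y(sT^k)=\boldsymbol 0$ for all $s>0$ and $0\le k\le n$. It suffices to show $Y\equiv\boldsymbol 0$ on $\mathcal P_o^n$. The plan is to show first that $Y$ vanishes on every simplex with a vertex at the origin. A triangulation argument combined with Lemma~\ref{lemma2.1} then gives vanishing on every polytope containing the origin. We use that $Y(\varnothing)=\boldsymbol 0$.

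\emph{Step 1: simplices with a vertex at $o$.} Let $T\in\mathcal T^k$, say $T=[o,v_1,\dots,v_k]$ with $v_1,\dots,v_k$ linearly independent.
\begin{itemize}
\item If $k<n$, complete $v_1,\dots,v_k$ to a basis and rescale one of the added vectors so the determinant is $1$. This gives $\phi\in\SL(n)$ with $\phi e_i=v_i$ for $i\le k$, so $T=\phi T^k$ and $Y(T)=\phi\cdot Y(T^k)=\boldsymbol 0$.
\item If $k=n$, let $\psi$ be the linear map with $\psi e_i=v_i$. Swapping $v_1,v_2$ if necessary, we may assume $\det\psi>0$; swapping changes $\psi$ but not $T$. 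Put $s=(\det\psi)^{1/n}$. Then $s^{-1}\psi\in\SL(n)$ and $T=(s^{-1}\psi)(sT^n)$, so $Y(T)=\boldsymbol 0$.
\item If $k=0$, then $T=\{o\}=T^0$ and $Y(T)=\boldsymbol 0$ directly.
\end{itemize}

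\emph{Step 2: general $P\in\mathcal P_o^n$.} Let $P$ have dimension $d$. Triangulate $P$ into $d$-simplices each having $o$ as a vertex, so that every nonempty intersection of them is again a simplex containing $o$ as a vertex.
\begin{itemize}
\item If $o$ is a vertex of $P$, take any triangulation of $P$ that uses no new vertices and has $o$ as a vertex of every cell, for instance a pulling triangulation from $o$.
\item If $o$ is not a vertex, pull from $o$: triangulate each facet of $P$ not containing $o$ in a compatible way, and cone over $o$. This covers $P$ because every point of $P$ lies on a segment from $o$ to a point on such a facet. When $o$ lies in the relative interior of $P$, every facet qualifies. When $o$ lies on the boundary, the facets containing $o$ contribute nothing, and the cones over the remaining facets still cover $P$.
\end{itemize}
Working inside $\operatorname{aff}P$, which is a linear subspace because $o\in P$, all cells are simplices in $\mathcal T^d$. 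Their intersections are faces of cones with apex $o$, hence lie in $\mathcal T^j$ for some $j\le d$. Applying Lemma~\ref{lemma2.1} to the cells, $Y(P)$ becomes a signed sum of values of $Y$ on simplices with a vertex at $o$, and each of these vanishes by Step~1. Hence $Y(P)=\boldsymbol 0$.

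The main obstacle is the combinatorial construction in Step 2. The triangulation must satisfy the paper's definition, in which all intersections are simplices, and every cell and every intersection must contain $o$ as a vertex. The case $o\in\partial P$ with $o$ not a vertex needs the most care. I would settle it by citing the standard pulling-triangulation argument of \cite{LudwigReitzner2017}: triangulate the boundary facets not containing $o$ consistently, then cone to $o$.
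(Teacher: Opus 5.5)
Your proposal is correct and follows the standard argument that the paper defers to (Li; Ludwig--Reitzner, Sections~2 and~4). You first reduce to simplices with a vertex at $o$, which are $\SL(n)$-images of $T^k$ for $k<n$ or of $sT^n$, and then cone from $o$ over a compatible triangulation of the facets not containing $o$ and apply Lemma~\ref{lemma2.1}; the paper uses these same two ingredients in its proofs of Lemma~\ref{prop:simple-n4} and Theorem~\ref{thm:main}.
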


\begin{lemma}\label{lem:uniqueness-all}
Let $Z,Z':\mathcal P^n\to\Sym^p(\R^n)$ be
$\SL(n)$-covariant valuations.
If $Z(sT^k)=Z'(sT^k)$ for every $s>0$ and $0\le k\le n$,
and $Z(s\widetilde T^{k-1})=Z'(s\widetilde T^{k-1})$
for every $s>0$ and $1\le k\le n$,
then $Z(P)=Z'(P)$ for every $P\in\mathcal P^n$.
\end{lemma}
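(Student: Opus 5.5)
The statement to prove is \Cref{lem:uniqueness-all}; it is a uniqueness statement, so I would work with the difference. Set $W=Z-Z'$. Then $W:\mathcal P^n\to\Sym^p(\R^n)$ is again an $\SL(n)$-covariant valuation, and it vanishes on $sT^k$ for all $s>0$, $0\le k\le n$, and on $s\widetilde T^{k-1}$ for all $s>0$, $1\le k\le n$. The goal is $W\equiv\boldsymbol 0$. I would proceed by induction on $\dim P$, proving at each dimension $d$ that $W$ vanishes on all $d$-dimensional simplices first and then on all $d$-dimensional polytopes.

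\textbf{Simplices.} Let $T$ be a $d$-simplex. There are two cases.
\begin{itemize}
\item If $o\notin\operatorname{aff}T$, then by \eqref{eq:tilde-simplex-normalize} we can write $T=\phi(s\widetilde T^{d})$ with $\phi\in\SL(n)$. Covariance gives $W(T)=\phi\cdot W(s\widetilde T^{d})=\boldsymbol 0$.
\item If $o\in\operatorname{aff}T$ and $o$ is a vertex of $T$, then $T=\phi(sT^d)$ for some $\phi\in\SL(n)$ and $s>0$. For $d<n$ any $s$ can be absorbed into $\phi$, and for $d=n$ we choose $s=(n!\,V(T))^{1/n}$. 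So $W(T)=\boldsymbol 0$.
\item If $o\in\operatorname{aff}T$ but $o$ is not a vertex of $T$, we dissect by coning from the origin. When $o\in T$, the cones $[o,F]$ over the facets $F$ of $T$ not containing $o$ triangulate $T$. When $o\notin T$, we have $[o,T]=T\cup\bigcup_F [o,F]$, where $F$ runs over the facets of $T$ visible from $o$, and this union is again a triangulation.
\end{itemize}
In either sub-case every intersection of pieces is a simplex of dimension $<d$ (so $W$ vanishes on it by induction) or is of the form $[o,G]$ with $o$ a vertex. Applying \Cref{lemma2.1} gives $W(T)=\boldsymbol 0$, since all other terms are simplices with a vertex at $o$ of dimension at most $d$, which are already handled.

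Degenerate cases where $o$ lies in the affine hull of a facet are handled the same way, because the pieces are then lower-dimensional and are covered by induction. For $d=0$, a point is either $\{o\}=T^0$ or lies in $\widetilde{\mathcal T}^{0}$, so the base case holds.

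\textbf{General polytopes.} A $d$-dimensional polytope $P$ has a triangulation $\{T_1,\dots,T_r\}$ in the sense defined above (for instance a pulling triangulation from the vertices). Every nonempty intersection of the $T_i$ is a simplex of dimension at most $d$, so \Cref{lemma2.1} expresses $W(P)$ as a signed sum of values of $W$ on simplices. All of these vanish by the previous step and the induction hypothesis, so $W(P)=\boldsymbol 0$.

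\textbf{Main obstacle.} The delicate point is the middle case: simplices with $o\in\operatorname{aff}T$ that do not have $o$ as a vertex. One has to check that the coning decomposition really is a triangulation with simplicial intersections, including when $o$ lies on lower-dimensional faces, so that \Cref{lemma2.1} applies. I would reduce the degenerate positions to lower-dimensional pieces and track them through the induction. The same argument restricted to $\mathcal P_o^n$ proves \Cref{lem:uniqueness-origin}; there the simplices in $\widetilde{\mathcal T}$ never appear, because every polytope in $\mathcal P_o^n$ triangulates into simplices having $o$ as a vertex (cone from $o$), plus lower-dimensional faces through $o$.
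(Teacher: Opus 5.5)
Your argument is correct and follows essentially the route the paper points to (Ludwig--Reitzner and Li). You use triangulations, coning from $o$, and the visible-facet decomposition $[o,P]=P\cup\bigcup_i[o,E_i]$ to reduce everything to simplices in $\mathcal T^k$ and $\widetilde{\mathcal T}^{k-1}$, and then normalize these to $sT^k$ and $s\widetilde T^{k-1}$ by $\SL(n)$-covariance.

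One slip needs repairing: in the sub-case $o\in\operatorname{aff}T\setminus T$, the term $W([o,T])$ is not a simplex term, since $[o,T]$ is in general not a simplex. So before handling that sub-case, first show that $W$ vanishes on $d$-dimensional polytopes containing $o$, by coning from $o$ over their facets not containing $o$. This uses only simplices with $o$ as a vertex, so there is no circularity.
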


\subsection{Symmetric tensors and homogeneous polynomials}
\label{sec2:symmetric-tensors}

For $v_1,\ldots,v_p\in\mathbb R^n$, the symmetric tensor
product is defined by
\[
v_1\odot\cdots\odot v_p
=\frac1{p!}\sum_{\sigma\in\mathfrak S_p}
v_{\sigma(1)}\otimes\cdots\otimes v_{\sigma(p)},
\]
where $\mathfrak S_p$ is the symmetric group on
$\{1,\ldots,p\}$. In particular, $x^{\odot p}=x^{\otimes p}$.
The natural action of $\GL(n)$ on
$\operatorname{Sym}^p(\mathbb R^n)$ is determined by 
\[
\phi\cdot(v_1\odot\cdots\odot v_p)
=\phi v_1\odot\cdots\odot\phi v_p.
\]
The Euclidean inner product induces an inner product on
$\operatorname{Sym}^p(\mathbb R^n)$ satisfying
\[
\langle x^{\odot p},u^{\odot p}\rangle=(x\cdot u)^p,
\qquad x,u\in\mathbb R^n.
\]

We identify $\mathbbm{t}\in\operatorname{Sym}^p(\mathbb R^n)$
with the homogeneous polynomial of degree $p$ given by 
\begin{equation}\label{eq1}
\widehat{\mathbbm{t}}(u)
=\langle\mathbbm{t},u^{\odot p}\rangle,
\qquad u\in\mathbb R^n.
\end{equation}
The map $\mathbbm{t}\mapsto\widehat{\mathbbm{t}}$ is injective
and satisfies
\begin{equation}\label{eq2}
\widehat{\phi\cdot\mathbbm{t}}(u)
=
\widehat{\mathbbm{t}}(\phi^{\top}u)
\end{equation}
for all $\phi\in\GL(n)$ and $u\in\mathbb R^n$,
where $\phi^{\top}$ denotes the transpose of $\phi$. 

For $m\ge1$, let
\begin{equation}\label{eq:hp}
h_p(u_1,\ldots,u_m)
=\sum_{\alpha_1+\cdots+\alpha_m=p}
u_1^{\alpha_1}\cdots u_m^{\alpha_m}
\end{equation}
denote the complete homogeneous symmetric polynomial of
degree $p$ in $m$ variables, where the sum ranges over all
nonnegative integers $\alpha_1,\ldots,\alpha_m$.

\subsection{The moment tensor}
The moment tensor $M^{p,0}$ is a simple valuation satisfying
\begin{equation}\label{eq:moment-cov}
M^{p,0}(\phi P)
=\phi\cdot M^{p,0}(P),
\qquad P\in\mathcal P^n,\quad \phi\in\SL(n).
\end{equation}
%We record the following properties of the moment tensor.
\begin{lemma}\label{lem:M}
%Let $n\ge3$ and $p\ge1$. 
For
$u=(u_1,\ldots,u_n)\in\mathbb R^n$,
\[
\widehat{M^{p,0}(T^n)}(u)
=
\frac{p!}{(n+p)!}\,h_p(u_1,\ldots,u_n).
\]
Moreover, \[
M^{p,0}(sP)=s^{n+p}M^{p,0}(P)
\]
for every $P\in\mathcal P^n$ and $s>0$.
\end{lemma}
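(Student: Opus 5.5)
We need to prove Lemma \ref{lem:M}: the polynomial of $M^{p,0}(T^n)$, and the scaling identity.

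The plan is to compute $\widehat{M^{p,0}(T^n)}$ directly from the definition, using \eqref{eq1} and the identity $\langle x^{\odot p},u^{\odot p}\rangle=(x\cdot u)^p$. Linearity of the inner product in the first argument passes through the integral, so
\[
\widehat{M^{p,0}(T^n)}(u)=\int_{T^n}(x\cdot u)^p\,dx .
\]
Expand $(x\cdot u)^p=(x_1u_1+\cdots+x_nu_n)^p$ by the multinomial theorem:
\[
(x\cdot u)^p=\sum_{\alpha_1+\cdots+\alpha_n=p}\frac{p!}{\alpha_1!\cdots\alpha_n!}\,u_1^{\alpha_1}\cdots u_n^{\alpha_n}\,x_1^{\alpha_1}\cdots x_n^{\alpha_n}.
\]
Each monomial can then be integrated over $T^n=\{x_i\ge0,\ \sum x_i\le1\}$ with the Dirichlet integral formula
\[
\int_{T^n}x_1^{\alpha_1}\cdots x_n^{\alpha_n}\,dx=\frac{\alpha_1!\cdots\alpha_n!}{(n+|\alpha|)!}.
\]
This formula is the only substantive step. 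It can be proved by induction on $n$, integrating out $x_n$ and using the Beta integral $\int_0^1 t^a(1-t)^b\,dt=a!\,b!/(a+b+1)!$, or equivalently via the Laplace transform/Gamma trick. Substituting it, the factorials $\alpha_i!$ cancel, and every monomial of degree $p$ receives the same coefficient $p!/(n+p)!$. Their sum is exactly $\frac{p!}{(n+p)!}h_p(u_1,\ldots,u_n)$ by \eqref{eq:hp}.

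For the scaling statement, substitute $x=sy$ in $\int_{sP}x^{\odot p}\,dx$. Then $dx=s^n\,dy$ and $(sy)^{\odot p}=s^p\,y^{\odot p}$ by multilinearity of the symmetric tensor product, which gives $M^{p,0}(sP)=s^{n+p}M^{p,0}(P)$. This holds for every $P\in\mathcal P^n$, including lower-dimensional $P$, where both sides vanish.

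The main point needing care is the Dirichlet formula. Everything else is bookkeeping, together with the injectivity of $\mathbbm t\mapsto\widehat{\mathbbm t}$, which is needed only to interpret the result as determining the tensor.
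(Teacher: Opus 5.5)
Your proposal is correct and follows the paper's proof essentially step for step. You pair with $u^{\odot p}$ to obtain $\int_{T^n}(x\cdot u)^p\,dx$, expand by the multinomial theorem, and apply the Dirichlet integral so that the factors $\alpha!$ cancel; the scaling identity comes from the substitution $x=sy$. The only difference is that you sketch a proof of the Dirichlet formula via the Beta integral, whereas the paper simply quotes it.
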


\begin{proof}
For $\alpha=(\alpha_1,\ldots,\alpha_n)$ with nonnegative
integer entries and $\alpha_1+\cdots+\alpha_n=p$, let
$\alpha!=\alpha_1!\cdots\alpha_n!$ and
$x^\alpha=x_1^{\alpha_1}\cdots x_n^{\alpha_n}$.
The Dirichlet integral gives
\[
\int_{T^n}x^\alpha\,dx=\frac{\alpha!}{(n+p)!}.
\]
Thus, by \eqref{eq1} and the multinomial theorem,
\[
\begin{aligned}
\widehat{M^{p,0}(T^n)}(u)
&=\int_{T^n}(x\cdot u)^p\,dx\\
&=\sum_{\alpha_1+\cdots+\alpha_n=p}
\frac{p!}{\alpha!}\,u^\alpha
\int_{T^n}x^\alpha\,dx\\
&=\frac{p!}{(n+p)!}\,h_p(u_1,\ldots,u_n).
\end{aligned}
\]
For $s>0$, the change of variables $x=sy$ yields
\[
M^{p,0}(sP)
=\int_{sP}x^{\odot p}\,dx
=s^{n+p}\int_P y^{\odot p}\,dy
=s^{n+p}M^{p,0}(P).
\]
\end{proof}

\begin{lemma}\label{lem:M1}
Let $n\ge 3$ and $p\ge 1$. Define
$Z:\mathcal P^n\to\operatorname{Sym}^p(\mathbb R^n)$ by
\[
Z(P)=M^{p,0}([o,P])
\]
for every $P\in\mathcal P^n$.
Then $Z$ is an $\operatorname{SL}(n)$-covariant valuation.
\end{lemma}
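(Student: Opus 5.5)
The plan is to check covariance and the valuation property separately. Covariance is immediate. For $\phi\in\SL(n)$ we have $\phi o=o$, so $[o,\phi P]=\phi[o,P]$. Then \eqref{eq:moment-cov} gives
\[
Z(\phi P)=M^{p,0}(\phi[o,P])=\phi\cdot M^{p,0}([o,P])=\phi\cdot Z(P).
\]
We also set $Z(\varnothing)=\boldsymbol 0$, consistent with $[o,\varnothing]$ being treated as contributing nothing.

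The substantive part is the valuation property. Let $P_1,P_2,P_1\cup P_2\in\mathcal P^n$; then $P_1\cap P_2$ is convex. If $P_1\cap P_2=\varnothing$, convexity of $P_1\cup P_2$ forces one of $P_1,P_2$ to be empty, and the identity is trivial. So assume $P_1\cap P_2\ne\varnothing$. I will prove the two set identities
\[
[o,P_1\cup P_2]=[o,P_1]\cup[o,P_2],\qquad [o,P_1\cap P_2]=[o,P_1]\cap[o,P_2].
\]
Once these hold, the claim follows from the valuation property of $M^{p,0}$ applied to the polytopes $[o,P_1]$ and $[o,P_2]$, whose union and intersection are again polytopes by these identities. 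That $M^{p,0}$ is a valuation is clear, since it is the integral of $x^{\odot p}$ against Lebesgue measure; the stated simplicity is not even needed.

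For the set identities, write $[o,K]=\{tx:t\in[0,1],x\in K\}$ for a nonempty convex $K$. The union identity is then immediate: $t x$ with $x\in P_1\cup P_2$ lies in $[o,P_1]$ or in $[o,P_2]$. For the intersection identity, the inclusion $\subseteq$ is clear. For $\supseteq$, take $y=tx_1=sx_2$ with $x_i\in P_i$.
\begin{itemize}
\item If $y=o$, then $y\in[o,P_1\cap P_2]$, since this set contains $o$ (here we use $P_1\cap P_2\neq\varnothing$).
\item Otherwise $x_1$ and $x_2$ lie on the same ray from $o$, and the segment $[x_1,x_2]\subset P_1\cup P_2$ by convexity of the union.
\end{itemize}
In the second case, the segment $[x_1,x_2]$ is connected and covered by the two closed sets $P_1\cap[x_1,x_2]$ and $P_2\cap[x_1,x_2]$, both of which are nonempty. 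Hence these two sets meet, at a point $z\in P_1\cap P_2$ on the ray. Each $P_i\cap[x_1,x_2]$ is a subsegment, one containing $x_1$ and the other containing $x_2$, so $z$ can be chosen with $|z|\ge\min(|x_1|,|x_2|)\ge|y|$. This makes $y$ a multiple $t'z$ with $t'\in[0,1]$, so $y\in[o,P_1\cap P_2]$.

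The main obstacle I expect is this last step: choosing $z$ at least as far from the origin as $y$. The approach is to compare with whichever of $x_1,x_2$ is closer to $o$. Say $|x_1|\le|x_2|$. If $x_1\in P_2$ already, take $z=x_1$. Otherwise the subsegment $P_2\cap[x_1,x_2]$ contains $x_2$ but not $x_1$, and every point of it lies between $x_1$ and $x_2$. Since $|y|\le|x_1|$, any intersection point $z$ then has $|z|\ge|x_1|\ge|y|$, which finishes the argument.
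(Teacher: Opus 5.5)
Your proof is correct and follows the paper's argument essentially step for step. Covariance comes from $[o,\phi P]=\phi[o,P]$. The valuation property comes from the two set identities, and the reverse inclusion for intersections uses the connectedness of the segment $[x_1,x_2]$ on the ray through $y$, which is covered by the two closed sets $P_1\cap[x_1,x_2]$ and $P_2\cap[x_1,x_2]$. The ``obstacle'' in your last paragraph resolves itself, and no case distinction is needed: every point of $[x_1,x_2]$ has the form $wy$ with $w\ge1$ (this is the paper's $w\ge1$), so $|z|\ge|y|$ for any meeting point $z$.
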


\begin{proof}
Since $[o,\phi P]=\phi[o,P]$
for every \(\phi\in\mathrm{SL}(n)\), by the $\operatorname{SL}(n)$-covariance of \(M^{p,0}\),
\[
Z(\phi P)
=
M^{p,0}(\phi[o,P])
=
\phi\cdot M^{p,0}([o,P])
=
\phi\cdot Z(P).
\]
Thus \(Z\) is \(\mathrm{SL}(n)\)-covariant.

It remains to verify the valuation property. Let
\(P,Q\in\mathcal P^n\) be such that \(P\cup Q\) is convex.
We claim that
\[
[o,P\cup Q]=[o,P]\cup[o,Q],
\qquad
[o,P\cap Q]=[o,P]\cap[o,Q].
\]
The first identity and the inclusion from left to right in the second are immediate. For the reverse
inclusion, take
$x\in[o,P]\cap[o,Q]$. If $x=o$, the assertion follows from
$P\cap Q\ne\varnothing$. Otherwise, choose $\nu_1,\nu_2\ge1$ such
that $\nu_1 x\in P$ and $\nu_2x\in Q$. Since $P\cup Q$ is convex,
the nonempty closed sets $P\cap[\nu_1 x,\nu_2x]$ and $Q\cap[\nu_1 x,\nu_2x]$
cover the connected segment $[\nu_1 x,\nu_2x]$ and hence meet at
some $wx$, where $w\ge1$. Thus
$x\in[o,wx]\subseteq[o,P\cap Q]$.
The valuation property of $M^{p,0}$ now gives
\[
\begin{aligned}
Z(P\cup Q)+Z(P\cap Q)
&=M^{p,0}([o,P]\cup[o,Q])
 +M^{p,0}([o,P]\cap[o,Q])\\
&=Z(P)+Z(Q).
\end{aligned}
\]
\end{proof}

\section{$\SL(n)$-covariant valuations on $\mathcal P_o^n$}
\subsection{Values on lower-dimensional simplices}
%We first consider the standard simplices $T^k$ with $0\le k<n$.
\begin{lemma}\label{lem:point-zero}
Let $n\ge 3$ and $p\ge 1$. If $Z:\mathcal P_o^n\longrightarrow \Sym^p(\R^n)$
is an $\SL(n)$-covariant valuation, then $Z(\{o\})=\boldsymbol{0}.$
\end{lemma}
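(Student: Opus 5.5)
The plan is to use only covariance under $\SL(n)$ applied to the single point $\{o\}$, which is fixed by every linear map. For every $\phi\in\SL(n)$ we have $\phi\{o\}=\{o\}$. Writing $\mathbbm{t}=Z(\{o\})$, covariance then gives $\phi\cdot\mathbbm{t}=\mathbbm{t}$ for all $\phi\in\SL(n)$. In the polynomial picture \eqref{eq1}--\eqref{eq2}, this says that the homogeneous polynomial $\widehat{\mathbbm{t}}$ of degree $p$ satisfies
\[
\widehat{\mathbbm{t}}(\phi^{\top}u)=\widehat{\mathbbm{t}}(u)
\]
for all $u\in\R^n$ and all $\phi\in\SL(n)$. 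Since $\phi\mapsto\phi^{\top}$ is a bijection of $\SL(n)$, the polynomial $\widehat{\mathbbm{t}}$ is $\SL(n)$-invariant.

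Next I use transitivity. Since $n\ge2$, $\SL(n)$ acts transitively on $\R^n\setminus\{o\}$. Hence $\widehat{\mathbbm{t}}$ takes a constant value $c$ on $\R^n\setminus\{o\}$. Now I bring in homogeneity: take the diagonal map $\phi=\operatorname{diag}(2,\tfrac12,1,\ldots,1)\in\SL(n)$, or simply use $u$ and $2u$, which both lie in $\R^n\setminus\{o\}$. Then
\[
c=\widehat{\mathbbm{t}}(2u)=2^p\widehat{\mathbbm{t}}(u)=2^pc .
\]
Because $p\ge1$, this forces $c=0$. So $\widehat{\mathbbm{t}}$ vanishes on $\R^n\setminus\{o\}$, and by continuity (or homogeneity of degree $p\ge1$) also at $o$.

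Finally, injectivity of $\mathbbm{t}\mapsto\widehat{\mathbbm{t}}$ gives $\mathbbm{t}=\boldsymbol 0$, i.e. $Z(\{o\})=\boldsymbol 0$.

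There is no real obstacle. The only point needing care is the transitivity of $\SL(n)$ on nonzero vectors: any nonzero $u$ extends to a basis, and rescaling one of the other basis vectors makes the determinant $1$. The valuation property is not needed at all.
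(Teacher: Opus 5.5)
Your proof is correct. It shares the paper's outline: put $\mathbbm{t}=Z(\{o\})$, use $\phi\{o\}=\{o\}$ to get $\widehat{\mathbbm{t}}(\phi^{\top}u)=\widehat{\mathbbm{t}}(u)$, show that $\widehat{\mathbbm{t}}$ is constant, and use homogeneity of degree $p\ge1$ to force the constant to be $0$.

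The two arguments differ in the constancy step.
\begin{itemize}
\item The paper uses only the one-parameter shear groups $\Phi_t=I+tE_{ij}$. It differentiates $\widehat{\mathbbm{t}}(x+tx_ie_j)=\widehat{\mathbbm{t}}(x)$ at $t=0$ to get $x_i\,\partial_j\widehat{\mathbbm{t}}(x)=0$. It then needs a density and continuity argument to conclude that every partial derivative vanishes identically.
\item You use that $\SL(n)$ acts transitively on $\R^n\setminus\{o\}$. This gives constancy on nonzero vectors at once, with no calculus. Your justification of transitivity (extend $u$ to a basis and rescale one of the other vectors) is exactly what is needed.
\end{itemize}

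Your route is shorter here, but it relies on the stabilizer of $\{o\}$ being all of $\SL(n)$. The paper's shear computation has the advantage of carrying over almost verbatim to \Cref{lem3.1}. There only the shears $I+tE_{1j}$ with $j>k$ are used, and the conclusion is not constancy but that $\widehat{Z(T^k)}$ depends only on $x_1,\ldots,x_k$.

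One small point: the diagonal map $\operatorname{diag}(2,\tfrac12,1,\ldots,1)$ you mention contributes nothing. Being in $\SL(n)$, it only gives another instance of invariance. The comparison of $u$ with $2u$ through homogeneity is what does the work.
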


\begin{proof}
Let $\mathbbm{t}=Z(\{o\})\in\operatorname{Sym}^p(\mathbb R^n)$.
For distinct $i,j\in\{1,\ldots,n\}$ and $t\in\mathbb R$,
let $\Phi_t=I+tE_{ij}$, where $E_{ij}$ is the matrix with
$1$ in the $(i,j)$-entry and $0$ elsewhere.
Since $\Phi_t\in\operatorname{SL}(n)$ and
$\Phi_t\{o\}=\{o\}$, the $\SL(n)$-covariance of $Z$ gives
$\mathbbm{t}=\Phi_t\cdot\mathbbm{t}$.
By \eqref{eq2},
\[
\widehat{\mathbbm{t}}(x)
=\widehat{\mathbbm{t}}(\Phi_t^\top x)
=\widehat{\mathbbm{t}}(x+t x_i e_j)
\]
for all $x\in\mathbb R^n$ and $t\in\mathbb R$.
Differentiating at $t=0$ yields $x_i\,\partial_j\widehat{\mathbbm{t}}(x)=0.$
Hence $\partial_j\widehat{\mathbbm{t}}$ vanishes on the
dense set $\{x\in\mathbb R^n:x_i\ne0\}$.
Since $\partial_j\widehat{\mathbbm{t}}$ is continuous,
it vanishes on $\mathbb R^n$.
As $j$ was arbitrary, $\widehat{\mathbbm{t}}$ is constant.
Its homogeneity of degree $p\ge1$ implies
$\widehat{\mathbbm{t}}=0$, and hence
$\mathbbm{t}=\boldsymbol{0}$.
\end{proof}

\begin{lemma}\label{lem3.1}
Let $n\geq 3$ and $p\geq 1$. If
$Z:\mathcal P_o^n\to\operatorname{Sym}^p(\mathbb R^n)$
is an $\mathrm{SL}(n)$-covariant valuation, then
\[
Z(T^k)\in
\operatorname{Sym}^p\bigl(\operatorname{span}\{e_1,\ldots,e_k\}\bigr),
\qquad
Z(sT^k)=s^pZ(T^k)
\]
for every $1\leq k<n$ and $s>0$.
\end{lemma}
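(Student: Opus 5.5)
The whole argument rests on the stabilizer of $T^k$ in $\SL(n)$. For $k<n$, the simplex $T^k$ lies in the coordinate subspace $E_k=\operatorname{span}\{e_1,\ldots,e_k\}$, so any $\phi\in\SL(n)$ that fixes $e_1,\ldots,e_k$ fixes $T^k$ pointwise. Covariance then gives $\mathbbm t=\phi\cdot\mathbbm t$ for $\mathbbm t=Z(T^k)$. By \eqref{eq2} this reads $\widehat{\mathbbm t}(u)=\widehat{\mathbbm t}(\phi^\top u)$ for every $u\in\R^n$. We will write $u=(u',u'')$ with $u'\in\R^k$ and $u''\in\R^{n-k}$.

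\textbf{Support.} We show that $\widehat{\mathbbm t}$ depends only on $u'$. As in Lemma~\ref{lem:point-zero}, take $\Phi_t=I+tE_{ij}$ with $j\le k$ and $i>k$. The matrix $E_{ij}$ has only column $j$ nonzero, so $\Phi_t e_l=e_l$ for $l\ne j$ and $\Phi_te_j=e_j+te_i$. That moves $e_j$, which is wrong. We want $\phi$ to fix $e_l$ for $l\le k$, that is, column $l$ of $E_{ij}$ must vanish for $l\le k$; so we take $j>k$ instead. Then $\Phi_t^\top u=u+tu_je_i$. Differentiating in $t$ gives $u_j\partial_i\widehat{\mathbbm t}=0$ for every $i\neq j$ with $j>k$. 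By density, $\partial_i\widehat{\mathbbm t}=0$ for every $i$ that differs from some index greater than $k$.

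Since $n-k\ge1$, some index $j>k$ exists. For $i\le k$ this shows nothing, because those derivatives may be nonzero. For $i>k$ we need a second index $j>k$ with $j\ne i$, and that requires $n-k\ge2$. If $k=n-1$, we instead use diagonal maps $\phi=\operatorname{diag}(1,\ldots,1,a,a^{-1})$, which do not fix $e_{n-1}$, so the argument has to change. For $k=n-1$ we use $\phi e_l=e_l$ for $l<n$ and $\phi e_n=e_n+\sum_{l<n}c_le_l$, which lies in $\SL(n)$ and fixes $T^{n-1}$. Then $\phi^\top u=u+(\sum_l c_lu_l)e_n$, and differentiating gives $u_l\,\partial_n\widehat{\mathbbm t}=0$, so $\partial_n\widehat{\mathbbm t}=0$. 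The same maps (adding multiples of the fixed vectors $e_l$, $l\le k$, to $e_j$, $j>k$) handle general $k$: they give $\partial_j\widehat{\mathbbm t}=0$ for all $j>k$.

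Hence $\widehat{\mathbbm t}$ is a polynomial in $u_1,\ldots,u_k$ alone. Such polynomials correspond exactly to tensors in $\Sym^p(E_k)$, because the pairing identifies $\Sym^p(E_k)$ with the polynomials in $u'$ and the correspondence $\mathbbm t\mapsto\widehat{\mathbbm t}$ is injective.

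\textbf{Homogeneity.} Since $k<n$, the map $\phi=\operatorname{diag}(s,\ldots,s,s^{-k/(n-k)},\ldots)$, which scales $E_k$ by $s$ and its complement by $s^{-k/(n-k)}$, lies in $\SL(n)$ and satisfies $\phi T^k=sT^k$. Thus $Z(sT^k)=\phi\cdot Z(T^k)$. By \eqref{eq2},
\[
\widehat{Z(sT^k)}(u)=\widehat{\mathbbm t}(\phi^\top u)=\widehat{\mathbbm t}(su',\ast)=s^p\,\widehat{\mathbbm t}(u),
\]
using the support result and degree-$p$ homogeneity. Injectivity then gives $Z(sT^k)=s^pZ(T^k)$.

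The only delicate point is the choice of shears: they must fix $E_k$ pointwise while shearing the complementary directions.
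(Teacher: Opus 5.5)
Your argument is correct and is essentially the paper's proof. The shears $e_j\mapsto e_j+\sum_{l\le k}c_le_l$ ($j>k$) fix $T^k$ and give $u_l\,\partial_j\widehat{\mathbbm t}=0$, hence $\partial_j\widehat{\mathbbm t}=0$ for $j>k$. A diagonal element of $\SL(n)$ scaling $e_1,\ldots,e_k$ by $s$ then yields $Z(sT^k)=s^pZ(T^k)$; the paper puts the compensating factor $s^{-k}$ on $e_{k+1}$ alone, which makes no difference.

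The detour in your support paragraph comes from a transpose slip: $(I+tE_{ij})^\top u=u+tu_ie_j$, not $u+tu_je_i$. With the correct formula, your first choice with $j>k$ already gives $u_i\,\partial_j\widehat{\mathbbm t}=0$ for any $i\neq j$. So you should delete the supposed need for $n-k\ge2$, the separate treatment of $k=n-1$, and the false claim that $\partial_i\widehat{\mathbbm t}=0$ for $i\le k$.
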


\begin{proof}
For $k<j\le n$ and $t\in\mathbb R$, consider
$\Phi_t=I+tE_{1j}$.
Then $\Phi_t\in\SL(n)$ and $\Phi_t e_\ell=e_\ell$
for $1\le\ell\le k$.
Thus $\Phi_tT^k=T^k$, and the $\SL(n)$-covariance of $Z$ yields
\[
Z(T^k)=Z(\Phi_tT^k)=\Phi_t\cdot Z(T^k).
\]
By the polynomial identification,
\[
\widehat{Z(T^k)}(x)
=\widehat{Z(T^k)}(\Phi_t^\top x)
=\widehat{Z(T^k)}(x+t x_1e_j)
\]
for all $x\in\mathbb R^n$ and $t\in\mathbb R$.
Differentiating at $t=0$ gives $x_1\partial_j\widehat{Z(T^k)}(x)=0.$
Hence $\partial_j\widehat{Z(T^k)}$ vanishes on the
dense set $\{x\in\mathbb R^n:x_1\ne0\}$.
Since $\partial_j\widehat{Z(T^k)}$ is continuous,
it vanishes on $\mathbb R^n$.
Since $j>k$ was arbitrary, $\widehat{Z(T^k)}$ depends
only on $x_1,\ldots,x_k$. Consequently,
\[
Z(T^k)\in
\operatorname{Sym}^p\bigl(\operatorname{span}\{e_1,\ldots,e_k\}\bigr).
\]

For $s>0$, define $\varphi_s$ by
\[
\varphi_s e_i=s e_i \quad (1\le i\le k),\qquad
\varphi_s e_{k+1}=s^{-k}e_{k+1},\qquad
\varphi_s e_i=e_i \quad (k+2\le i\le n).
\]
Then $\varphi_s\in\SL(n)$ and $\varphi_sT^k=sT^k$.
Since $Z(T^k)\in
\operatorname{Sym}^p\bigl(\operatorname{span}\{e_1,\ldots,e_k\}\bigr)$,
the $\SL(n)$-covariance of $Z$ gives
\[
Z(sT^k)
=\varphi_s\cdot Z(T^k)
=s^p Z(T^k).
\]
\end{proof}

For an $\SL(n)$-covariant valuation $Z:\mathcal P_o^n\to\Sym^p(\mathbb R^n)$
and $1\le k<n$, abbreviate
\[
F_k(x_1,\ldots,x_k)
=
\widehat{Z(T^k)}(x_1e_1+\cdots+x_ke_k).
\] 
Each $F_k$ is symmetric. Indeed, for $2\le k<n$, any transposition of $e_1,\ldots,e_k$,
together with replacing $e_{k+1}$ by $-e_{k+1}$, yields an element of \(\SL(n)\) that preserves \(T^k\).
The assertion follows from the \(\SL(n)\)-covariance of \(Z\) and Lemma~\ref{lem3.1}.

\begin{lemma}\label{lem:T12}
Let $n\ge3$ and $p\ge1$. If $Z:\mathcal P_o^n\to\Sym^p(\R^n)$ is an $\SL(n)$-covariant valuation, then there exists $d\in\R$ such that
\begin{equation}\label{eq:T1}
 Z(T^1)=d e_1^{\odot p}
\end{equation}
and
\begin{equation}\label{eq:T2}
 Z(T^2)=\frac d 2\bigl(e_1^{\odot p}+e_2^{\odot p}\bigr).
\end{equation}
\end{lemma}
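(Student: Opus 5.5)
We first pin down $Z(T^1)$ directly from Lemma~\ref{lem3.1}. We then obtain $Z(T^2)$ from the dissection identity \eqref{eq:dissection-low} with $k=2$, which is allowed because $n\ge3$ gives $2<n$, combined with the symmetry of $F_2$.

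\emph{Step 1: the value on $T^1$.} By Lemma~\ref{lem3.1} with $k=1$, $Z(T^1)\in\Sym^p(\operatorname{span}\{e_1\})$. This space is one-dimensional and spanned by $e_1^{\odot p}$. Hence $Z(T^1)=d\,e_1^{\odot p}$ for some $d\in\R$, and equivalently $F_1(x_1)=d\,x_1^p$. This proves \eqref{eq:T1}.

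\emph{Step 2: the functional equation for $F_2$.} For $k=2$ the dissection formula after Definition~\ref{definition1} reads
\[
Z(T^2)+Z(\phi_1 T^1)=Z(\phi_1T^2)+Z(\psi_1T^2),
\]
because $[o,e_1,e_3,\ldots,e_k]$ is just $T^1$ when $k=2$. By $\SL(n)$-covariance the terms on the right and the second term on the left become $\phi_1\cdot Z(\cdot)$ and $\psi_1\cdot Z(\cdot)$. We translate this into polynomials via \eqref{eq2}. By Lemma~\ref{lem3.1}, $\widehat{Z(T^2)}$ depends only on $x_1,x_2$ and $\widehat{Z(T^1)}$ only on $x_1$. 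Therefore the scaling of $e_n$ in $\phi_1,\psi_1$ plays no role, since $n\ge3$ means $e_n\notin\{e_1,e_2\}$. We only need the first two coordinates of $\phi_1^\top x$ and $\psi_1^\top x$, which are $(\lambda x_1+(1-\lambda)x_2,\ x_2)$ and $(x_1,\ \lambda x_1+(1-\lambda)x_2)$ respectively. This yields, for all $x_1,x_2\in\R$ and $\lambda\in(0,1)$,
\[
F_2(x_1,x_2)+d\bigl(\lambda x_1+(1-\lambda)x_2\bigr)^p
=F_2\bigl(\lambda x_1+(1-\lambda)x_2,\,x_2\bigr)+F_2\bigl(x_1,\,\lambda x_1+(1-\lambda)x_2\bigr).
\]

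\emph{Step 3: solving the equation.} We specialize to $x_2=0$ and $x_1=1$, and write $a=F_2(1,0)$. Homogeneity of $F_2$ gives $F_2(\lambda,0)=a\lambda^p$. The equation then becomes
\[
F_2(1,\lambda)=a+(d-a)\lambda^p, \qquad \lambda\in(0,1).
\]
Both sides are polynomials in $\lambda$, so the identity holds for all $\lambda\in\R$. By homogeneity of degree $p$ (treating $x_1=0$ by continuity), this gives
\[
F_2(x_1,x_2)=a\,x_1^p+(d-a)\,x_2^p.
\]
Since $F_2$ is symmetric, as noted after Lemma~\ref{lem3.1}, comparing the coefficients of $x_1^p$ and $x_2^p$ gives $a=d-a$, so $a=d/2$. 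Because $\widehat{Z(T^2)}$ depends only on $x_1,x_2$ and the map $\mathbbm t\mapsto\widehat{\mathbbm t}$ is injective, this yields $Z(T^2)=\frac d2\bigl(e_1^{\odot p}+e_2^{\odot p}\bigr)$, which is \eqref{eq:T2}.

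The main point needing care is Step 2. There one must check that the $\lambda^{-1}e_n$ and $(1-\lambda)^{-1}e_n$ components of $\phi_1,\psi_1$ really drop out, which uses $n\ge3$ and Lemma~\ref{lem3.1}. One must also transpose correctly when applying \eqref{eq2}. Once the equation is written down, the substitution $x_2=0$ reduces everything to a one-variable polynomial identity.
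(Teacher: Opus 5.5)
Your proof is correct and follows the paper's argument: the same dissection of $T^2$ gives the same functional equation \eqref{eq:F2-cocycle}, and the symmetry of $F_2$ then fixes the constant at $d/2$. The only difference is how you solve that equation. You specialize to $(x_1,x_2)=(1,0)$, so that $t=\lambda$, and homogeneity then gives $F_2$ immediately. The paper instead extends the identity to all real $\lambda$, takes $\lambda=1$ and $\lambda=-x_2/(x_1-x_2)$, and uses a density argument, so your version is slightly more economical.
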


\begin{proof}
The first assertion follows from \Cref{lem3.1}.
For $0<\lambda<1$, set $v_\lambda=\lambda e_1+(1-\lambda)e_2$.
The dissection in Definition~\ref{definition1}, with $k=2$, gives
$T^2=\phi_1T^2\cup\psi_1T^2$ and
$\phi_1T^2\cap\psi_1T^2=[o,v_\lambda]=\phi_1T^1$.
The valuation property and the $\SL(n)$-covariance of $Z$ imply
\begin{equation}\label{eq11}
Z(T^2)+\phi_1\cdot Z(T^1)
=\phi_1\cdot Z(T^2)+\psi_1\cdot Z(T^2).
\end{equation}
Set $t=\lambda x_1+(1-\lambda)x_2$. Since
\[
\begin{aligned}
\phi_1^\top(x_1e_1+x_2e_2)
 &=te_1+x_2e_2,\qquad
\psi_1^\top(x_1e_1+x_2e_2)
 =x_1e_1+te_2,
\end{aligned}
\]
it follows from \eqref{eq11} and \eqref{eq2}
 that
\begin{equation}\label{eq:F2-cocycle}
F_2(x_1,x_2)+dt^p
=F_2(t,x_2)+F_2(x_1,t).
\end{equation}
For fixed $x_1,x_2$, both sides are polynomials in $\lambda$.
Thus the identity holds for every $\lambda\in\R$.
Taking $\lambda=1$ gives $F_2(x_1,x_1)=dx_1^p$.
For $x_1\ne x_2$, take
$\lambda=-x_2/(x_1-x_2)$. Then
\[
F_2(x_1,x_2)=F_2(0,x_2)+F_2(x_1,0).
\]
Thus the identity holds on the dense set
$\{(x_1,x_2)\in\mathbb R^2:x_1\ne x_2\}$.
Since both sides are continuous, it holds for
all $x_1,x_2\in\mathbb R$.
By the homogeneity of $F_2$, there exists
$c\in\mathbb R$ such that $F_2(x_1,0)=cx_1^p$.
Since $F_2$ is symmetric,
$F_2(0,x_2)=cx_2^p$. Hence
$F_2(x_1,x_2)=c(x_1^p+x_2^p).$
Taking $x_2=x_1$ and using
$F_2(x_1,x_1)=d x_1^p$ gives $c=d/2$.
Therefore,
$F_2(x_1,x_2)=\frac{d}{2}(x_1^p+x_2^p),$
and \eqref{eq:T2} follows from the injectivity
of the polynomial identification.
\end{proof}

\begin{lemma}\label{lem:T3-zero}
Let $n\ge4$ and $p\ge1$. If $Z:\mathcal P_o^n\to\Sym^p(\R^n)$ is an $\SL(n)$-covariant valuation, then
\[
 Z(T^1)=Z(T^2)=Z(T^3)=\boldsymbol{0}.
\]
\end{lemma}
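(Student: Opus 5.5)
The plan is to repeat the argument of Lemma~\ref{lem:T12} one dimension higher, using the dissection of $T^3$ from Definition~\ref{definition1} with $k=3$. This is legitimate because $n\ge4$ gives $k=3<n$. The resulting functional equation for $F_3$ is then combined with the symmetry of $F_3$ and the formula for $F_2$ from Lemma~\ref{lem:T12}. The goal is to force $d=0$; this kills $Z(T^1)$ and $Z(T^2)$ directly, and afterwards $Z(T^3)$.

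\textbf{Step 1: the functional equation for $F_3$.}
By \eqref{eq:dissection-low} and $\SL(n)$-covariance,
\[
Z(T^3)+\phi_1\cdot Z([o,e_1,e_3])=\phi_1\cdot Z(T^3)+\psi_1\cdot Z(T^3).
\]
The simplex $[o,e_1,e_3]$ is the image of $T^2$ under an element of $\SL(n)$ mapping $e_1\mapsto e_1$ and $e_2\mapsto e_3$, with a sign change on some $e_j$ for $j\ge4$. By Lemma~\ref{lem3.1} and Lemma~\ref{lem:T12}, its polynomial is therefore $\frac d2(x_1^p+x_3^p)$.

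Since $n\ge4$, the rescaling of $e_n$ in $\phi_1,\psi_1$ does not affect tensors in $\Sym^p(\operatorname{span}\{e_1,e_2,e_3\})$. Set $t=\lambda x_1+(1-\lambda)x_2$. Applying \eqref{eq2} then gives
\[
F_3(x_1,x_2,x_3)+\tfrac d2\,(t^p+x_3^p)=F_3(t,x_2,x_3)+F_3(x_1,t,x_3).
\]
Both sides are polynomial in $\lambda$, so this identity holds for all $\lambda\in\R$.

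\textbf{Step 2: two specializations.}
Taking $\lambda=1$ (so $t=x_1$) gives
\[
F_3(x_1,x_1,x_3)=\tfrac d2(x_1^p+x_3^p).
\]
Taking $t=0$ (possible when $x_1\ne x_2$), and then extending by density and continuity, gives
\[
F_3(x_1,x_2,x_3)=F_3(0,x_2,x_3)+F_3(x_1,0,x_3)-\tfrac d2x_3^p.
\]

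\textbf{Step 3: solving for $F_3$.}
By homogeneity and symmetry, $F_3(x,0,0)=cx^p$ for some $c\in\R$. Applying the $t=0$ identity to $F_3(x_1,x_3,0)$, which equals $F_3(x_1,0,x_3)$ by symmetry, gives $F_3(x_1,0,x_3)=c(x_1^p+x_3^p)$. The third slot is $0$ there, so no $d$-term appears. The same holds for $F_3(0,x_2,x_3)$. Substituting back,
\[
F_3=c(x_1^p+x_2^p)+(2c-\tfrac d2)x_3^p.
\]
Symmetry of $F_3$ forces $c=\frac d2$, so $F_3=\frac d2(x_1^p+x_2^p+x_3^p)$. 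Comparing with $F_3(x_1,x_1,x_3)=\frac d2(x_1^p+x_3^p)$ yields $\frac d2 x_1^p=0$, hence $d=0$.

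\textbf{Conclusion.}
With $d=0$ we get $F_3\equiv0$. Lemma~\ref{lem:T12} then gives $Z(T^1)=Z(T^2)=\boldsymbol 0$. Injectivity of $\mathbbm t\mapsto\widehat{\mathbbm t}$, together with Lemma~\ref{lem3.1}, gives $Z(T^3)=\boldsymbol 0$.

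\textbf{Main obstacle.}
The only delicate point is Step 1. One must correctly identify $Z([o,e_1,e_3])$ through an $\SL(n)$ map that uses a spare coordinate $e_j$ with $j\ge4$ to fix the determinant. One must also check that the factor $\lambda^{-1}$ on $e_n$ is harmless. Both facts rely on $n\ge4$, and this is exactly why the statement needs $n\ge4$.
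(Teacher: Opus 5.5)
Your proposal is correct and follows essentially the paper's route: the same $k=3$ dissection yielding the identical functional equation \eqref{eq:F3-cocycle}, extension to all $\lambda$, the diagonal specialization $\lambda=1$, and symmetry of $F_3$ to force $d=0$. The only difference is the second specialization: you take $t=0$ and then need an extra step to determine $F_3(x_1,0,x_3)$, whereas the paper takes $t=x_3$ and plugs in the diagonal formula directly, obtaining $F_3=\frac d2(x_1^p+x_2^p)$ one step sooner.
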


\begin{proof}

For $0<\lambda<1$, the dissection from
Definition~\ref{definition1} gives
$T^3=\phi_1T^3\cup\psi_1T^3,$ $
\phi_1T^3\cap\psi_1T^3=[o,v_\lambda,e_3]$,
where $v_\lambda=\lambda e_1+(1-\lambda)e_2$.
The valuation property of $Z$ gives
\[
Z(T^3)+Z([o,v_\lambda,e_3])
=Z(\phi_1T^3)+Z(\psi_1T^3).
\]
By \eqref{eq:T2} and the $\SL(n)$-covariance of $Z$,
\[
Z([o,v_\lambda,e_3])
=\frac d2\left(v_\lambda^{\odot p}
+e_3^{\odot p}\right).
\]
By \eqref{eq2},
\begin{equation}\label{eq:F3-cocycle}
F_3(x_1,x_2,x_3)
+\frac d2(t^p+x_3^p)
=F_3(t,x_2,x_3)+F_3(x_1,t,x_3),
\end{equation}
where $t=\lambda x_1+(1-\lambda)x_2$.
For fixed $x_1,x_2,x_3\in\mathbb R$,
both sides are polynomials in $\lambda$.
Since they agree on $(0,1)$, the identity
holds for all $\lambda\in\mathbb R$.
Taking $\lambda=1$ yields
\begin{equation}\label{eq:F3-diag}
F_3(x_1,x_1,x_3)
=\frac d2(x_1^p+x_3^p).
\end{equation}
For $x_1\ne x_2$, take
$\lambda=(x_3-x_2)/(x_1-x_2)$, giving $t=x_3$.
By \eqref{eq:F3-cocycle}, \eqref{eq:F3-diag},
and the symmetry of $F_3$,
\[
\begin{aligned}
F_3(x_1,x_2,x_3)+dx_3^p
&=F_3(x_3,x_2,x_3)+F_3(x_1,x_3,x_3)\\
&=\frac d2(x_3^p+x_2^p)
+\frac d2(x_3^p+x_1^p).
\end{aligned}
\]
Hence $F_3(x_1,x_2,x_3)
=\frac d2(x_1^p+x_2^p)$
whenever $x_1\ne x_2$.
Since both sides are continuous and
the set $\{(x_1,x_2,x_3)\in\mathbb R^3:
x_1\ne x_2\}$ is dense in $\mathbb R^3$,
the identity holds for all
$x_1,x_2,x_3\in\mathbb R$.
By the symmetry of $F_3$, we obtain $d=0$.
Consequently, $F_3\equiv0$ and
$Z(T^3)=\boldsymbol{0}$.
By \Cref{lem:T12}, we also have
$Z(T^1)=Z(T^2)=\boldsymbol{0}$.
\end{proof}

\begin{lemma}\label{lem:lower-induction}
Let $4\le k<n$ and $p\ge1$. If $Z:\mathcal P_o^n\to\Sym^p(\R^n)$ is an $\SL(n)$-covariant valuation and $Z(T^{k-1})=\boldsymbol{0}$, then $Z(T^k)=\boldsymbol{0}$.
\end{lemma}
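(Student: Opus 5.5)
We follow the pattern of Lemma~\ref{lem:T3-zero}, using the dissection of Definition~\ref{definition1} with $k$ in place of $3$. For $0<\lambda<1$ we have $T^k=\phi_1T^k\cup\psi_1T^k$ and $\phi_1T^k\cap\psi_1T^k=\phi_1[o,e_1,e_3,\ldots,e_k]$. The intersection is the image under an $\SL(n)$ map of $T^{k-1}$: take a permutation of coordinates sending $\{e_1,\ldots,e_{k-1}\}$ onto $\{e_1,e_3,\ldots,e_k\}$, with a sign change on $e_n$ if needed; this is possible since $k<n$. Since $Z(T^{k-1})=\boldsymbol{0}$ by hypothesis, $\SL(n)$-covariance gives $Z(\phi_1[o,e_1,e_3,\ldots,e_k])=\boldsymbol{0}$. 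The valuation property then reduces to
\[
Z(T^k)=\phi_1\cdot Z(T^k)+\psi_1\cdot Z(T^k).
\]

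We pass to polynomials via \eqref{eq2}. By Lemma~\ref{lem3.1}, $Z(T^k)$ lies in $\Sym^p(\operatorname{span}\{e_1,\ldots,e_k\})$, so the factor $\lambda^{-1}$ on $e_n$ in $\phi_1$ does not affect $F_k$. With $t=\lambda x_1+(1-\lambda)x_2$ we obtain
\[
F_k(x_1,x_2,x_3,\ldots,x_k)=F_k(t,x_2,x_3,\ldots,x_k)+F_k(x_1,t,x_3,\ldots,x_k).
\]
For fixed $x$, both sides are polynomials in $\lambda$ that agree on $(0,1)$, so the identity holds for all $\lambda\in\R$.

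Putting $\lambda=1$ (so $t=x_1$) gives $F_k(x_1,x_1,x_3,\ldots,x_k)=0$. Hence $F_k$ vanishes whenever two coordinates coincide, by its symmetry. For $x_1\neq x_2$, choose $\lambda=(x_3-x_2)/(x_1-x_2)$, so that $t=x_3$. Both terms on the right then have a repeated coordinate ($x_3$ appears twice), so they vanish and $F_k(x_1,\ldots,x_k)=0$. By continuity and density of $\{x_1\ne x_2\}$, $F_k\equiv0$, and injectivity of the polynomial identification gives $Z(T^k)=\boldsymbol{0}$.

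The main point needing care is the first step: checking that the intersection really is an $\SL(n)$-image of $T^{k-1}$, and that the extra scaling of $e_n$ in $\phi_1,\psi_1$ is harmless. Both rely on $k<n$ together with Lemma~\ref{lem3.1}. The functional equation itself is simpler than in Lemma~\ref{lem:T3-zero}, because the lower-dimensional term vanishes.
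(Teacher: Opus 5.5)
Your proposal is correct and follows the paper's proof essentially step for step: the same dissection from Definition~\ref{definition1}, the same reduction of the intersection to an $\SL(n)$-image of $T^{k-1}$, the same functional equation for $F_k$, and the same choices $\lambda=1$ and $\lambda=(x_3-x_2)/(x_1-x_2)$. Your remarks that the $e_n$-scaling in $\phi_1$ is harmless, and that Lemma~\ref{lem3.1} is needed to pass from $F_k\equiv0$ to $Z(T^k)=\boldsymbol{0}$, spell out points the paper leaves implicit.
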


\begin{proof}
Apply the dissection from Definition~\ref{definition1}
with $k$. Then
$T^k=\phi_1T^k\cup\psi_1T^k,$ $
\phi_1T^k\cap\psi_1T^k
=[o,v_\lambda,e_3,\ldots,e_k]$,
where $v_\lambda=\lambda e_1+(1-\lambda)e_2$.
Since $k<n$, the linear map sending
$e_1,\ldots,e_{k-1}$ to
$v_\lambda,e_3,\ldots,e_k$, respectively,
extends to an element of $\SL(n)$.
Thus $\phi_1T^k\cap\psi_1T^k$ is an
$\SL(n)$-image of $T^{k-1}$.
By the induction hypothesis and the
$\SL(n)$-covariance of $Z$,
\[
Z(\phi_1T^k\cap\psi_1T^k)=\boldsymbol{0}.
\]
The valuation property and the $\SL(n)$-covariance of $Z$ imply
\[
Z(T^k)=\phi_1\cdot Z(T^k)+\psi_1\cdot Z(T^k).
\]
Since
\[
\begin{aligned}
\phi_1^\top\left(\sum_{j=1}^k x_je_j\right)
&=t e_1+\sum_{j=2}^k x_je_j,\qquad
\psi_1^\top\left(\sum_{j=1}^k x_je_j\right)
=x_1e_1+t e_2+\sum_{j=3}^k x_je_j,
\end{aligned}
\]
where
$t=\lambda x_1+(1-\lambda)x_2,$
the polynomial identification gives
\begin{equation}\label{eq:Fk-functional}
\begin{aligned}
F_k(x_1,x_2,x_3,\ldots,x_k)
={}&F_k(t,x_2,x_3,\ldots,x_k)
+F_k(x_1,t,x_3,\ldots,x_k).
\end{aligned}
\end{equation}
For fixed $x_1,\ldots,x_k\in\mathbb R$,
both sides of \eqref{eq:Fk-functional} are polynomials in
$\lambda$. Since both sides are polynomials in $\lambda$, the identity
holds for all $\lambda\in\mathbb R$.
Taking $\lambda=1$ gives
\begin{equation}\label{eq:Fk-diagonal}
F_k(x_1,x_1,x_3,\ldots,x_k)=0.
\end{equation}
For $x_1\ne x_2$, choose
$\lambda=(x_3-x_2)/(x_1-x_2)$, so that $t=x_3$.
Then \eqref{eq:Fk-functional} yields
\[
F_k(x_1,x_2,x_3,\ldots,x_k)
=
F_k(x_3,x_2,x_3,\ldots,x_k)
+
F_k(x_1,x_3,x_3,\ldots,x_k).
\]
By the symmetry of $F_k$ and \eqref{eq:Fk-diagonal},
both terms on the right-hand side vanish. Hence
\[
F_k(x_1,x_2,x_3,\ldots,x_k)=0
\]
whenever $x_1\ne x_2$.
Since $\{x\in\mathbb R^k:x_1\ne x_2\}$ is dense in
$\mathbb R^k$ and $F_k$ is continuous, we obtain
$F_k\equiv0$. Consequently,
$Z(T^k)=\boldsymbol{0}$.
\end{proof}

\subsection{A polynomial characterization}

For a permutation matrix $\pi$, denote by $\pi(i)$ the index
such that $\pi e_i=e_{\pi(i)}$.
Since $\pi(sT^n)=sT^n$ for every $s>0$,
\eqref{eq:epsilon-cov} yields
\begin{equation}\label{eq:perm-parity-tensor}
\pi\cdot Z^\varepsilon(sT^n)
=
(\det\pi)^\varepsilon Z^\varepsilon(sT^n).
\end{equation}
For $r>0$ and $\varepsilon\in\{0,1\}$, define
\begin{equation}\label{eq:Adef}
G_r^\varepsilon(x)
=r^{-p/n}\widehat{Z^\varepsilon(r^{1/n}T^n)}(x),
\qquad x\in\mathbb R^n .
\end{equation}
Assume that $Z^\varepsilon$ is simple.
Let $a,b>0$ and put $r=a+b$ and $\lambda=a/r$.
Applying the dissection in Definition~\ref{definition2} to
$r^{1/n}T^n$ and using the simplicity of $Z^\varepsilon$, we obtain
\[
Z^\varepsilon(r^{1/n}T^n)
=
Z^\varepsilon(r^{1/n}\phi_2T^n)
+
Z^\varepsilon(r^{1/n}\psi_2T^n).
\]
Since
\[
r^{1/n}\phi_2T^n
=(\lambda^{-1/n}\phi_2)(a^{1/n}T^n),
\qquad
r^{1/n}\psi_2T^n
=((1-\lambda)^{-1/n}\psi_2)(b^{1/n}T^n),
\]
while $\lambda^{-1/n}\phi_2,
(1-\lambda)^{-1/n}\psi_2\in \mathrm{SL}(n),$
\eqref{eq:epsilon-cov} implies
\[
\begin{aligned}
Z^\varepsilon(r^{1/n}T^n)
={}&
\lambda^{-p/n}\phi_2\cdot
Z^\varepsilon(a^{1/n}T^n)
+
(1-\lambda)^{-p/n}\psi_2\cdot
Z^\varepsilon(b^{1/n}T^n).
\end{aligned}
\]
Passing to the associated polynomials and multiplying
by $r^{-p/n}$, we obtain from \eqref{eq2}
\begin{equation}\label{eq:pure-cocycle}
\begin{aligned}
G_{a+b}^\varepsilon(x_1,x_2,x_3,\ldots,x_n)
&=G_a^\varepsilon(t,x_2,x_3,\ldots,x_n)
+G_b^\varepsilon(x_1,t,x_3,\ldots,x_n),
\end{aligned}
\end{equation}
where $t=(ax_1+bx_2)/(a+b)$.
Moreover, \eqref{eq:perm-parity-tensor} and \eqref{eq2} imply
\begin{equation}\label{eq:perm-parity-poly}
G_r^\varepsilon(x_{\pi(1)},\ldots,x_{\pi(n)})
=(\det\pi)^\varepsilon G_r^\varepsilon(x_1,\ldots,x_n)
\end{equation}
for every permutation matrix $\pi$.

\begin{lemma}\label{lem:endpoint}
Let $p\ge1$ and fix $\varepsilon\in\{0,1\}$.
For each $r>0$, let $G_r^\varepsilon$ be a homogeneous
polynomial of degree $p$ satisfying
\begin{equation}\label{eq:two-variable-cocycle}
G_{a+b}^\varepsilon(x_1,x_2)
=
G_a^\varepsilon\left(\frac{ax_1+bx_2}{a+b},x_2\right)
+
G_b^\varepsilon\left(x_1,\frac{ax_1+bx_2}{a+b}\right)
\end{equation}
for all $a,b>0$ and $x_1,x_2\in\R$.
Suppose that
$G_{a+b}^\varepsilon(0,1)
=
G_a^\varepsilon(0,1)+G_b^\varepsilon(0,1)$
for all $a,b>0$.
Then there exists $c_0\in\R$ such that
\[
G_r^\varepsilon(x_1,x_2)
=
c_0r\,h_p(x_1,x_2),
\qquad
h_p(x_1,x_2)=\sum_{j=0}^{p}x_1^j x_2^{p-j},
\]
for all $r>0$ and $x_1,x_2\in\R$.
\end{lemma}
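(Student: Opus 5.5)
The plan is to reduce everything to the one-variable function
\[
g(r)=G_r^\varepsilon(0,1),\qquad r>0,
\]
which is additive by hypothesis. I will show that $g$ is in fact linear, without any regularity assumption, by specializing the cocycle \eqref{eq:two-variable-cocycle} to the line $x_1=0$. There it expresses a polynomial in one variable through values of $g$.

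\emph{Step 1: specialize to $x_1=0$, $x_2=1$.} Then $t=b/(a+b)=:\mu\in(0,1)$, and \eqref{eq:two-variable-cocycle} reads
\[
g(a+b)=G_a^\varepsilon(\mu,1)+G_b^\varepsilon(0,\mu).
\]
By homogeneity of degree $p$, $G_b^\varepsilon(0,\mu)=\mu^p g(b)$. Fix $a>0$ and let $b$ vary. The map $b\mapsto\mu=b/(a+b)$ is a bijection $(0,\infty)\to(0,1)$ with inverse $b=u(\mu):=a\mu/(1-\mu)$. Additivity of $g$ gives $g(a+u)=g(a)+g(u)$. Hence, for all $\mu\in(0,1)$,
\[
G_a^\varepsilon(\mu,1)=g(a)+(1-\mu^p)\,g\bigl(u(\mu)\bigr).
\]

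\emph{Step 2: linearity of $g$ (the key step).} This is the only place where the lack of regularity could bite, since a bare additive function on $(0,\infty)$ need not be linear. The identity from Step 1 rescues us. Because $1-\mu^p\neq0$ on $(0,1)$, we get
\[
g(u(\mu))=\frac{G_a^\varepsilon(\mu,1)-g(a)}{1-\mu^p}.
\]
The right-hand side is a rational function of $\mu$ with no poles on $(0,1)$, so it is continuous there. The inverse $\mu=u/(a+u)$ is continuous in $u$, so $g$ is continuous on $(0,\infty)$. A continuous additive function on $(0,\infty)$ is linear: additivity gives $\mathbb Q$-homogeneity, and continuity extends this to all $u>0$. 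Thus $g(u)=c_0u$ for some $c_0\in\R$ and all $u>0$.

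\emph{Step 3: recover $G_r^\varepsilon$.} Substitute $g(u)=c_0u$ into the formula of Step 1 and use $u(\mu)=a\mu/(1-\mu)$:
\[
G_a^\varepsilon(\mu,1)=c_0a+c_0a\,\frac{\mu(1-\mu^p)}{1-\mu}=c_0a\sum_{j=0}^p\mu^j=c_0a\,h_p(\mu,1)
\]
for $\mu\in(0,1)$. Both sides are polynomials in $\mu$ that agree on an interval, so they agree for all $\mu\in\R$. For $x_2\neq0$, homogeneity of degree $p$ gives
\[
G_a^\varepsilon(x_1,x_2)=x_2^p\,G_a^\varepsilon(x_1/x_2,1)=c_0a\,h_p(x_1,x_2).
\]
Both sides are continuous, so the identity extends to the remaining points $x_2=0$. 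Since $a>0$ was arbitrary, this is the claim.

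Notice that the full two-variable cocycle is used only along $x_1=0$. The real content is that the explicit rational dependence on $\mu$ forces the additive function $g$ to be continuous.
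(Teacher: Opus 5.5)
Your proof is correct and follows the paper's argument step for step. You evaluate the cocycle at $(0,1)$ and use additivity to express $G_a^\varepsilon(\mu,1)$ through $g$, read off the continuity of $g$ from that rational expression, obtain linearity via positive rationals, and substitute back to get $c_0a\,h_p(\mu,1)$; your explicit continuity extension to $x_2=0$ just spells out what the paper leaves under ``by homogeneity.''
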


\begin{proof}
Let $D(r)=G_r^\varepsilon(0,1)$.
Evaluating \eqref{eq:two-variable-cocycle} at $(0,1)$
and using the homogeneity of $G_b^\varepsilon$
and the additivity of $D$, we obtain
\begin{equation}\label{eq:endpoint-reduction1}
G_a^\varepsilon(\xi,1)
=
D(a)+(1-\xi^p)D(b),
\qquad
\xi=\frac{b}{a+b},
\end{equation}
for all $a,b>0$.
Taking $a=1$ and $b=r$ yields
\[
D(r)=
\frac{
G_1^\varepsilon\!\left(\frac{r}{1+r},1\right)-D(1)
}{
1-\left(\frac{r}{1+r}\right)^p
},
\qquad r>0.
\]
Since $G_1^\varepsilon$ is a polynomial and the denominator
is positive, $D$ is continuous on $(0,\infty)$.
By the additivity of $D$,
\[
m_1D\left(\frac {m_2} {m_1}\right)=D(m_2)=m_2D(1)
\]
for all positive integers $m_1,m_2$.
Thus $D(q)=qD(1)$ for every positive rational $q$.
By continuity and the density of the positive rationals
in $(0,\infty)$, this identity extends to all $r>0$.
Hence $D(r)=c_0r$ for all $r>0$, where $c_0=D(1)$.
For fixed $r>0$ and $0<\xi<1$, substituting
$a=r$ and $b=r\xi/(1-\xi)$ into
\eqref{eq:endpoint-reduction1} yields
\[
G_r^\varepsilon(\xi,1)
=
c_0r\left(1+\frac{\xi(1-\xi^p)}{1-\xi}\right)
=
c_0r\,h_p(\xi,1).
\]
Since both sides are polynomials in $\xi$ and agree
on $(0,1)$, the identity holds for all $\xi\in\mathbb R$.
By the homogeneity of $G_r^\varepsilon$ and $h_p$,
we obtain
\[
G_r^\varepsilon(x_1,x_2)
=
c_0r\,h_p(x_1,x_2),
\qquad x_1,x_2\in\mathbb R.
\]
Since $r>0$ was arbitrary, the proof is complete.
\end{proof}
%The moment tensor is a simple $\SL(n)$-covariant valuation.
By \Cref{lem:M},
\[
r^{-p/n}\widehat{M^{p,0}(r^{1/n}T^n)}(x)
=
\frac{p!}{(n+p)!}\,r h_p(x).
\]
Applying the same dissection argument as in
\eqref{eq:pure-cocycle} to $M^{p,0}$ gives
\begin{equation}\label{eq:hp-identity}
(a+b)h_p(x_1,x_2,x')
=
a h_p(t,x_2,x')
+
b h_p(x_1,t,x'),
\qquad
t=\frac{ax_1+bx_2}{a+b}.
\end{equation}
Here $x'=(x_3,\ldots,x_n)$.

\begin{theorem}\label{thm:poly-rigidity}
Let $n\ge3$, $p\ge1$, and $\varepsilon\in\{0,1\}$.
For each $r>0$, let $G_r^\varepsilon$ be a homogeneous polynomial of degree $p$
on $\R^n$. Suppose that
\begin{equation}\label{eq:rigid-identity}
G_{a+b}^\varepsilon(x_1,x_2,x')
=G_a^\varepsilon(t,x_2,x')+G_b^\varepsilon(x_1,t,x'),
\qquad t=\frac{ax_1+bx_2}{a+b},
\end{equation}
for all $a,b>0$ and $x\in\R^n$ and that
\begin{equation}\label{eq:rigid-parity}
G_r^\varepsilon(x_{\pi(1)},\ldots,x_{\pi(n)})
=(\det\pi)^\varepsilon G_r^\varepsilon(x_1,\ldots,x_n)
\end{equation}
for every permutation matrix $\pi$.
If $\varepsilon=0$, then $G_r^0=c_0rh_p$
for some $c_0\in\R$ and all $r>0$. If $\varepsilon=1$, then
$G_r^1\equiv0$
for all $r>0$.
\end{theorem}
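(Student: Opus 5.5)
Set $x=(x_1,x_2,x')$ with $x'=(x_3,\ldots,x_n)$. The plan is to reduce to \Cref{lem:endpoint} by freezing $x'$ and then to recover the full polynomial by an induction that peels off the variables $x_3,\ldots,x_n$ one at a time, using the parity relation \eqref{eq:rigid-parity} to move variables into the first two slots.

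\textbf{Step 1: variables with $x'=0$.} Put $g_r(x_1,x_2)=G_r^\varepsilon(x_1,x_2,0,\ldots,0)$. Setting $x'=0$ in \eqref{eq:rigid-identity} shows that $g_r$ satisfies \eqref{eq:two-variable-cocycle}. For the additivity hypothesis of \Cref{lem:endpoint}, I would use the other coordinates. Evaluate \eqref{eq:rigid-identity} at $x=e_3$, so $x_1=x_2=0$ and $t=0$. This gives
\[
G^\varepsilon_{a+b}(e_3)=G^\varepsilon_a(e_3)+G^\varepsilon_b(e_3).
\]
By \eqref{eq:rigid-parity} with the transposition $(2\,3)$ we get $G^\varepsilon_r(e_2)=(-1)^\varepsilon G^\varepsilon_r(e_3)$, so $r\mapsto g_r(0,1)$ is additive. \Cref{lem:endpoint} then gives $g_r=c_0 r h_p(x_1,x_2)$.

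If $\varepsilon=1$, the transposition $(1\,2)$ forces $g_r$ to be antisymmetric. Since $h_p$ is symmetric, $c_0=0$, and so $g_r\equiv0$.

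\textbf{Step 2: induction on the number of nonzero trailing coordinates.} Let $D_r=G_r^\varepsilon-c_0rh_p$. By \eqref{eq:hp-identity}, $D_r$ again satisfies \eqref{eq:rigid-identity} and \eqref{eq:rigid-parity}; for $\varepsilon=1$ take $c_0=0$. Suppose $D_r$ vanishes whenever at most $m$ coordinates are nonzero, with $m\ge2$. Now take $x$ with $m+1$ nonzero coordinates. By the permutation symmetry we may place them in positions $1,\ldots,m+1$ with $m+1\le n$. Fix $x_1\ne x_2$ and choose $a,b>0$ with $t=(ax_1+bx_2)/(a+b)$ equal to any prescribed value strictly between $x_1$ and $x_2$.

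Then \eqref{eq:rigid-identity} expresses $D_{a+b}(x)$ through $D_a,D_b$ at points with the same support. This alone does not reduce the support, so a better choice is needed. I would exploit that both sides are polynomial in $x$ for fixed $a,b$, so the identity extends to all real $x_1,x_2$. Choosing $x_2=0$ makes $t=ax_1/(a+b)$, and then
\[
D_{a+b}(x_1,0,x')=D_a(t,0,x')+D_b(x_1,t,x').
\]
The left side and the first term on the right have the second coordinate zero, hence smaller support. So $D_b(x_1,t,x')=0$ by the induction hypothesis. As $a,b$ range over positive reals, the ratio $t/x_1$ sweeps $(0,1)$. Polynomiality in $t$ then gives $D_b(x_1,y,x')=0$ for all $y$, and with the permutation symmetry the support grows by one. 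Iterating up to $m=n$ gives $D_r\equiv0$, that is, $G_r^0=c_0rh_p$ and $G_r^1\equiv0$.

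\textbf{Base case and main obstacle.} The base case of Step 2 needs $D_r=0$ on points with at most two nonzero coordinates. Step 1 supplies this for the coordinates $(1,2)$, and \eqref{eq:rigid-parity} carries it to every other pair of coordinates. For $\varepsilon=1$ the sign $(\det\pi)^\varepsilon$ is harmless, because it only multiplies a quantity already known to be zero.

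The step I expect to be delicate is the support-reduction argument in Step 2. The concern is whether setting $x_2=0$ is compatible with first deriving the identity for $x_1\ne x_2$. This is handled by noting that \eqref{eq:rigid-identity} is assumed for all $x\in\R^n$, so no extension is needed. What remains is to check that varying $a,b$ lets $t$ cover an open interval, so that polynomial vanishing in $t$ follows.
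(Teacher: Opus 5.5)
Your proposal is correct and is essentially the paper's argument. It uses the same two-variable reduction through \Cref{lem:endpoint}, with additivity obtained by evaluating at $e_3$ and applying the transposition $(2\,3)$, and the same use of the symmetry of $h_p$ to force $c_0=0$ when $\varepsilon=1$. It also subtracts $c_0rh_p$ via \eqref{eq:hp-identity} and uses the same trick of setting one of $x_1,x_2$ to zero in \eqref{eq:rigid-identity}, so that two terms vanish and the third vanishes on an interval. For that last point, fix $b$ and let $a$ vary, so that $t/x_1=a/(a+b)$ sweeps $(0,1)$.

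The only difference is bookkeeping. You induct on the number of nonzero coordinates inside $\R^n$. The paper instead inducts on $n$ by restricting to $x_n=0$, and closes each step with a uniqueness lemma: vanishing on $\{x_n=0\}$ forces vanishing everywhere. That lemma is exactly your step $m=n-1\to n$.
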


\begin{proof}
We first show that the restrictions to $x_n=0$ determine a family
uniquely for each fixed $\varepsilon$.
Let $\{f_r\}_{r>0}$ be a family of homogeneous polynomials of degree $p$
satisfying \eqref{eq:rigid-identity} and \eqref{eq:rigid-parity}
with $G_r^\varepsilon$ replaced by $f_r$, and suppose that
$f_r(x_1,\ldots,x_{n-1},0)=0$ for every $r>0$.
By \eqref{eq:rigid-parity}, $f_r$ vanishes whenever any coordinate is zero.
Setting $x_1=0$ in \eqref{eq:rigid-identity} therefore gives
\[
f_a\left(\frac{b}{a+b}x_2,x_2,x'\right)=0.
\]
For fixed $a>0$, the ratio $b/(a+b)$ ranges over $(0,1)$.
Thus $f_a$ vanishes on the nonempty open set
$\{x\in\mathbb R^n:x_2\ne0,\ 0<x_1/x_2<1\}$,
and hence $f_a\equiv0$.
Since \eqref{eq:rigid-identity} and \eqref{eq:rigid-parity} are linear
in the family, the difference of any two families with the same
$\varepsilon$ and the same restrictions to $x_n=0$ satisfies
these conditions. By the preceding argument, the two families coincide.
This proves uniqueness.

We now prove the assertion by induction on $n$.
First consider the case $n=3$.
The restrictions $G_r^\varepsilon(x_1,x_2,0)$ satisfy
\eqref{eq:two-variable-cocycle}.
Evaluating \eqref{eq:rigid-identity} at $(0,0,1)$ gives
\[
G_{a+b}^\varepsilon(0,0,1)
=
G_a^\varepsilon(0,0,1)
+
G_b^\varepsilon(0,0,1),
\]
so the map
$r\mapsto G_r^\varepsilon(0,0,1)$ is additive.
Applying \eqref{eq:rigid-parity} to the transposition of the
second and third coordinates yields
\[
G_r^\varepsilon(0,0,1)
=
(-1)^\varepsilon G_r^\varepsilon(0,1,0).
\]
Hence $r\mapsto G_r^\varepsilon(0,1,0)$ is additive.
Applying \Cref{lem:endpoint} to
$G_r^\varepsilon(x_1,x_2,0)$, we obtain
\[
G_r^\varepsilon(x_1,x_2,0)=cr\,h_p(x_1,x_2)
\]
for some $c\in\mathbb R$.

If $\varepsilon=1$, then
\eqref{eq:rigid-parity} gives
$G_r^\varepsilon(x_2,x_1,0)
=
-G_r^\varepsilon(x_1,x_2,0).$
Since $h_p$ is symmetric, it follows that $c=0$.
Consequently, $G_r^\varepsilon(x_1,x_2,0)=0$
for all $x_1,x_2\in\mathbb R$.
The uniqueness established above then implies
$G_r^\varepsilon\equiv0$.

If $\varepsilon=0$, then by \eqref{eq:hp-identity},
the family $G_r^\varepsilon-c_0rh_p$ satisfies
\eqref{eq:rigid-identity} and \eqref{eq:rigid-parity}.
Moreover, its restriction to $x_3=0$ vanishes.
The uniqueness result therefore gives
$G_r^\varepsilon=c_0rh_p.$

Now let $n\ge4$ and assume that the assertion holds in dimension
$n-1$.
The restrictions
$G_r^\varepsilon(x_1,\ldots,x_{n-1},0)$ satisfy the same
assumptions in dimension $n-1$.
Hence, by the induction hypothesis, they equal
$c_0rh_p(x_1,\ldots,x_{n-1})$ when $\varepsilon=0$,
and vanish when $\varepsilon=1$.
Since $h_p(x_1,\ldots,x_{n-1},0)
=
h_p(x_1,\ldots,x_{n-1}),$
the uniqueness result completes the induction.
\end{proof}
\subsection{The case $n=3$}\label{sec:three-dimensional}

\begin{lemma}\label{lem:n3-lower-parity}

Let $p\ge1$. If
$Z:\mathcal P_o^3\to\Sym^p(\R^3)$ is an
$\SL(3)$-covariant valuation, then
\[
Z^0(P)=Z(P),\qquad Z^1(P)=\boldsymbol{0}
\]
for every $P\in\mathcal P_o^3$ with $\dim P<3$.
Moreover, there is a constant $d\in\R$ such that
\[
Z^0(T^1)=d e_1^{\odot p},\qquad
Z^0(T^2)=\frac d2(e_1^{\odot p}+e_2^{\odot p}).
\]
\end{lemma}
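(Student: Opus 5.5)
The plan is to show that $Z^1$ vanishes on every lower-dimensional polytope in $\mathcal P_o^3$. Since $Z=Z^0+Z^1$, this gives $Z^0(P)=Z(P)$ for $\dim P<3$, and the formulas for $Z^0(T^1)$ and $Z^0(T^2)$ then follow at once from Lemma~\ref{lem:T12} applied to $Z$.

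The key observation is that any $P\in\mathcal P_o^3$ with $\dim P\le 2$ lies in a plane $L$ through the origin. There is a reflection $\rho\in\SL^{\pm}(3)\setminus\SL(3)$ that fixes $L$ pointwise: choose coordinates with $L=\operatorname{span}\{e_1,e_2\}$ up to $\SL(3)$, and let $\rho e_3=-e_3$. Then $\rho P=P$. Since $Z^1$ is independent of the choice of $\theta$, we may take $\theta=\rho$ in \eqref{eq:parity}, which gives
\[
Z^1(P)=\tfrac12\bigl(Z(P)-\rho\cdot Z(P)\bigr).
\]
It therefore suffices to show that $\rho\cdot Z(P)=Z(P)$. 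The claim is that $Z(P)\in\Sym^p(L)$, and $\rho$ acts trivially on $\Sym^p(L)$.

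To prove this, first reduce to simplices. Triangulate $P$ into simplices with a vertex at $o$; this is possible when $o\in P$ by coning from $o$ over the faces not containing $o$. Apply the inclusion--exclusion principle (Lemma~\ref{lemma2.1}) so that $Z(P)$ becomes a signed sum of values of $Z$ on simplices in $\mathcal T^k\cap L$ for $k\le2$, together with $\{o\}$. Each such simplex is an $\SL(3)$-image of $sT^k$, via a map sending $\operatorname{span}\{e_1,\dots,e_k\}$ into $L$. By Lemma~\ref{lem3.1}, $Z(sT^k)$ lies in $\Sym^p(\operatorname{span}\{e_1,\ldots,e_k\})$, and by Lemma~\ref{lem:point-zero}, $Z(\{o\})=\boldsymbol 0$. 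Hence each term lies in $\Sym^p(L)$, and so does $Z(P)$.

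A cleaner alternative avoids triangulation. Work directly with the stabilizer: the shears $I+tE_{13}$ and $I+tE_{23}$ fix $L$ pointwise, so they fix $P$. The derivative argument from Lemma~\ref{lem3.1} then shows that $\widehat{Z(P)}$ depends only on $x_1,x_2$, that is, $Z(P)\in\Sym^p(L)$ in the adapted coordinates. Conjugating back by the $\SL(3)$ map used to move $L$ into position handles general $L$. I will use this second route.

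The main obstacle is bookkeeping the coordinate change. We need an $\SL(3)$ map $\phi$ with $\phi(\operatorname{span}\{e_1,e_2\})=L$. The reflection is then $\rho=\phi\,\mathrm{diag}(1,1,-1)\,\phi^{-1}$, which fixes $L$ pointwise and has determinant $-1$, as required. Covariance transports the invariance statement. Once that is in place, $\rho\cdot Z(P)=Z(P)$, so $Z^1(P)=\boldsymbol 0$, and the lemma follows.
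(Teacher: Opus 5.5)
Your proposal is correct and follows essentially the same argument as the paper: you show $Z(P)\in\Sym^p(L)$ by the shear/derivative argument of Lemma~\ref{lem3.1} after an $\SL(3)$ change of coordinates, use that a determinant $-1$ map can be replaced on $L$ by one fixing $L$ pointwise, and read off the formulas from Lemma~\ref{lem:T12}. The only cosmetic difference is that you use the independence of $Z^1$ from $\theta$ to take $\theta$ to be a reflection fixing $L$. The paper instead keeps a general $\theta$ and constructs $\widetilde\theta\in\SL(3)$ with $\widetilde\theta|_L=\theta^{-1}|_L$.
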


\begin{proof}
The case $P=\{o\}$ follows from \Cref{lem:point-zero}.
Otherwise, set $L=\operatorname{span}P$. 
Applying the argument in the proof of \Cref{lem3.1}
after an $\SL(3)$ change of coordinates, we obtain
$Z(P)\in\Sym^p(L)$.
Choose $\theta\in\SL^{\pm}(3)\setminus\SL(3)$ as in
\eqref{eq:parity}.
Since $\dim L<3$, there exists
$\widetilde\theta\in\SL(3)$ such that
$\widetilde\theta|_L=\theta^{-1}|_L$.
Thus $\widetilde\theta P=\theta^{-1}P$, and the
$\SL(3)$-covariance of $Z$ gives
\[
\theta\cdot Z(\theta^{-1}P)
=\theta\widetilde\theta\cdot Z(P)
=Z(P),
\]
where the last equality follows since
$\theta\widetilde\theta$ fixes $L$ pointwise and
$Z(P)\in\Sym^p(L)$.
By \eqref{eq:parity},
$Z^0(P)=Z(P)$ and $Z^1(P)=\boldsymbol{0}$.
The remaining identities follow from
\eqref{eq:T1} and \eqref{eq:T2}.
\end{proof}

By \Cref{lem:n3-lower-parity}, $Z^1$ is simple.
Hence $G_r^1$ satisfies
\eqref{eq:pure-cocycle} and \eqref{eq:perm-parity-poly}.
Applying \Cref{thm:poly-rigidity}, we obtain
$G_r^1\equiv0$ for all $r>0$.
By \eqref{eq:Adef} and the injectivity of the polynomial
identification, $Z^1(r^{1/3}T^3)=\boldsymbol{0}$ for $r>0.$
Replacing $r^{1/3}$ by $s$ gives
\begin{equation}\label{eq:n3-oddzero}
Z^1(sT^3)=\boldsymbol{0},\qquad s>0.
\end{equation}
The same conclusion holds for every $T\in\mathcal T^3$.
Indeed, let $T=[o,v_1,v_2,v_3]$ and, after interchanging
two vertices if necessary, assume that
$\det[v_1\ v_2\ v_3]>0$.
Define
\[
s=(\det[v_1\ v_2\ v_3])^{1/3},
\qquad
\Phi_1=s^{-1}[v_1\ v_2\ v_3].
\]
Then $\Phi_1\in\SL(3)$ and $T=\Phi_1(sT^3)$.
By \eqref{eq:epsilon-cov} and \eqref{eq:n3-oddzero},
\[
Z^1(T)=\Phi_1\cdot Z^1(sT^3)=\boldsymbol{0}.
\]

We next consider $Z^0$ and the polynomials $G_r^0$ defined in
\eqref{eq:Adef}, with $n=3$.
For $a,b>0$, let $\lambda=a/(a+b)$,
$s=(a+b)^{1/3}$, and $t=(ax_1+bx_2)/(a+b)$.
By Definition~\ref{definition2},
\[
T^3=\phi_2T^3\cup\psi_2T^3,
\qquad
\phi_2T^3\cap\psi_2T^3
=[o,\lambda e_1+(1-\lambda)e_2,e_3].
\]
By \Cref{lem3.1,lem:n3-lower-parity},
\[
s^{-p}\widehat{Z^0(s[o,\lambda e_1+(1-\lambda)e_2,e_3])}(x)
=\frac d2(t^p+x_3^p).
\]
Since $\det\phi_2=\lambda$ and
$\det\psi_2=1-\lambda$, the maps
$\lambda^{-1/3}\phi_2$ and
$(1-\lambda)^{-1/3}\psi_2$ belong to $\SL(3)$.
By the $\SL^\pm(3)$-$0$-covariance of $Z^0$
and \eqref{eq:Adef},
\[
\begin{aligned}
s^{-p}\widehat{Z^0(s\phi_2T^3)}(x)
&=G_a^0(t,x_2,x_3),\qquad
s^{-p}\widehat{Z^0(s\psi_2T^3)}(x)
=G_b^0(x_1,t,x_3).
\end{aligned}
\]
Thus the valuation property of $Z^0$ and
\eqref{eq2} yield
\begin{equation}\label{eq:n3-identity}
G_{a+b}^0(x_1,x_2,x_3)+\frac d2(t^p+x_3^p)
=G_a^0(t,x_2,x_3)+G_b^0(x_1,t,x_3).
\end{equation}
We now show that $d=0$, so that
\eqref{eq:n3-identity} reduces to
\eqref{eq:pure-cocycle}.

\begin{lemma}\label{lem:n3-defect-zero}
Let $p\ge1$. If
$Z:\mathcal P_o^3\to\Sym^p(\R^3)$
is an $\SL(3)$-covariant valuation, then
\[
Z^0(T^1)=Z^0(T^2)=\boldsymbol{0}.
\]
\end{lemma}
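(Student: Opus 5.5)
The plan is to extract $d=0$ from the identity \eqref{eq:n3-identity} alone, together with the permutation symmetry \eqref{eq:perm-parity-poly} of $G_r^0$ (case $\varepsilon=0$, so $G_r^0$ is fully symmetric in $x_1,x_2,x_3$). The argument imitates the proof of \Cref{lem:endpoint}, keeping track of the extra term $\frac d2(t^p+x_3^p)$. Once $d=0$ is known, \Cref{lem:n3-lower-parity} gives $Z^0(T^1)=de_1^{\odot p}=\boldsymbol 0$ and $Z^0(T^2)=\frac d2(e_1^{\odot p}+e_2^{\odot p})=\boldsymbol 0$.

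\emph{Step 1: an additive function.} Evaluate \eqref{eq:n3-identity} at $x=(0,0,1)$, where $t=0$. Put $D(r)=G_r^0(0,0,1)$; by symmetry this also equals $G_r^0(0,1,0)$. The identity becomes
\[
D(a+b)+\tfrac d2=D(a)+D(b),
\]
so $E(r)=D(r)-\frac d2$ is additive on $(0,\infty)$.

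\emph{Step 2: continuity of $E$.} Evaluate \eqref{eq:n3-identity} at $x=(0,1,0)$, where $t=\xi:=b/(a+b)$. Using homogeneity, $G_b^0(0,\xi,0)=\xi^pD(b)$. Writing $D=E+\frac d2$, this should give
\[
G_a^0(\xi,1,0)=E(a)+E(b)+\tfrac d2-\xi^pE(b).
\]
Setting $a=1$ and solving for $E(b)$, with the positive denominator $1-\xi^p$, expresses $E$ through the polynomial $G_1^0$. Hence $E$ is continuous. Exactly as in \Cref{lem:endpoint}, additivity on the rationals plus continuity yields $E(r)=cr$ for a constant $c$.

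\emph{Step 3: forcing $d=0$.} Substitute $b=a\xi/(1-\xi)$ for $0<\xi<1$. The formula of Step 2 then becomes
\[
G_a^0(\xi,1,0)=c\,a\,h_p(\xi,1)+\tfrac d2 .
\]
Both sides are polynomials in $\xi$, so the identity holds for all $\xi\in\R$. Homogenizing in degree $p$ gives
\[
G_a^0(x_1,x_2,0)=c\,a\,h_p(x_1,x_2)+\tfrac d2\,x_2^p .
\]
The left side is symmetric in $x_1,x_2$, and so is $h_p$. Hence $\frac d2(x_2^p-x_1^p)\equiv0$, and since $p\ge1$ this forces $d=0$.

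The only delicate point is Step 2. Without continuity we only have an additive function, so we must check that the constant defect $\frac d2$ is absorbed correctly into $E$ before the regularity argument of \Cref{lem:endpoint} is applied. Everything else is direct substitution.
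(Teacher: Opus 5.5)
Your argument is correct, but it takes a different route from the paper.

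\textbf{The paper's route.} The paper works at the diagonal point. It sets $g(r)=G_r^0(1,1,1)$ and evaluates \eqref{eq:n3-identity} at $(1,1,1)$, which gives $g(a+b)+d=g(a)+g(b)$. It then differentiates \eqref{eq:n3-identity} along $(1+u,1-u,1)$ at $u=0$. By symmetry and Euler's identity, every $\partial_iG_r^0(1,1,1)$ equals $\frac p3g(r)$, so this yields $a\bigl(g(b)-\frac{3d}{4}\bigr)=b\bigl(g(a)-\frac{3d}{4}\bigr)$. Hence $g$ is affine with constant term $\frac{3d}{4}$. The defect equation, however, forces the constant term to be $d$, so $d=0$.

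\textbf{Your route.} You work at the boundary points $(0,0,1)$ and $(0,1,0)$, following the template of \Cref{lem:endpoint}. You absorb the defect into $E=D-\frac d2$ and solve the resulting Cauchy equation, using the continuity extracted from the polynomial $G_1^0$. This reconstructs $G_a^0(x_1,x_2,0)=c\,a\,h_p(x_1,x_2)+\frac d2x_2^p$. The defect term $\frac d2x_2^p$ is asymmetric, which contradicts the $x_1\leftrightarrow x_2$ symmetry of $G_a^0$. I checked your key formulas, namely $G_a^0(\xi,1,0)=E(a)+(1-\xi^p)E(b)+\frac d2$ with $\xi=b/(a+b)$, and the substitution $b=a\xi/(1-\xi)$ leading to $c\,a\,h_p(\xi,1)+\frac d2$. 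Both are right.

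\textbf{Comparison.} The paper's argument is shorter and purely algebraic. The first-order relation gives the affine form of $g$ directly, with no Cauchy equation and no regularity step. Yours is longer but reuses existing machinery. Once $d=0$, it also yields $G_r^0(x_1,x_2,0)=c\,r\,h_p(x_1,x_2)$ as a byproduct, which is exactly the base-case input needed in \Cref{thm:poly-rigidity}.

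One small point: your homogenization step is valid only for $x_2\neq0$. Either extend it by continuity, or compare at a point such as $(x_1,x_2)=(1,2)$, where $2^p-1\neq0$ already forces $d=0$.
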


\begin{proof}

Let $d$ be as in \Cref{lem:n3-lower-parity}, and define
$g(r)=G_r^0(1,1,1)$ for $r>0$.
Taking $(x_1,x_2,x_3)=(1,1,1)$ in
\eqref{eq:n3-identity} gives
\begin{equation}\label{eq:g-additive}
g(a+b)+d=g(a)+g(b).
\end{equation}

For $u\in\R$, substitute
$x_1=1+u$, $x_2=1-u$, and $x_3=1$ into
\eqref{eq:n3-identity}. Then
$t=1+(a-b)u/(a+b)$.
By \eqref{eq:perm-parity-poly}, $G_r^0$ is symmetric.
Since $G_r^0$ is homogeneous of degree $p$,
\[
G_r^0(1+u,1+u,1+u)=(1+u)^p g(r).
\]
Differentiating \eqref{eq:n3-identity} with respect to
$u$ at $u=0$ and using
$\partial_iG_r^0(1,1,1)=\frac{p}{3}g(r)$ for $i=1,2,3$,
we obtain
\[
a\left(g(b)-\frac{3d}{4}\right)
=
b\left(g(a)-\frac{3d}{4}\right),
\qquad a,b>0.
\]
Taking $b=1$ gives
\[
g(r)=c'r+\frac{3d}{4},\qquad r>0,
\]
where $c'=g(1)-\frac{3d}{4}$.
Substituting this relation into \eqref{eq:g-additive} yields
$d=0$.
\end{proof}

\begin{lemma}\label{lem:n3-even-simple}
Let $p\ge1$. If
$Z:\mathcal P_o^3\to\Sym^p(\R^3)$
is an $\SL(3)$-covariant valuation, then $Z$ is simple.
\end{lemma}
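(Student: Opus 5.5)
To show that $Z$ is simple, I will check that $Z(P)=\boldsymbol{0}$ for every $P\in\mathcal P_o^3$ with $\dim P<3$. By \Cref{lem:n3-lower-parity}, $Z(P)=Z^0(P)$ for such $P$. By \Cref{lem:n3-defect-zero}, $Z^0(T^1)=Z^0(T^2)=\boldsymbol{0}$, so $d=0$ and $Z(T^1)=Z(T^2)=\boldsymbol{0}$. By \Cref{lem3.1}, $Z(sT^k)=s^pZ(T^k)=\boldsymbol{0}$ for $k=1,2$ and all $s>0$. Together with \Cref{lem:point-zero}, this gives $Z(\{o\})=\boldsymbol{0}$. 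So $Z$ vanishes on all scaled standard simplices of dimension at most $2$.

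Next I extend this to all lower-dimensional simplices with a vertex at the origin. Let $T\in\mathcal T^k$ with $k\le 2$, say $T=[o,v_1,\ldots,v_k]$ with $v_1,\ldots,v_k$ linearly independent. Since $k<3$, the linear map sending $e_i\mapsto v_i$ for $i\le k$ extends to an element $\Phi\in\SL(3)$; one fixes the determinant by scaling a complementary vector. Then $T=\Phi T^k$, and covariance gives $Z(T)=\Phi\cdot Z(T^k)=\boldsymbol{0}$.

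For a general $P\in\mathcal P_o^3$ with $\dim P\le 2$, I triangulate $P$ into simplices having $o$ as a vertex. If $o\in P$, triangulate each facet of $P$ (relative to $\operatorname{aff}P$) not containing $o$, and cone from $o$. All simplices arising, and all their intersections, are then simplices of dimension at most $2$ with a vertex at $o$, or the point $\{o\}$. By \Cref{lemma2.1}, $Z(P)$ is an alternating sum of values of $Z$ on such simplices, so $Z(P)=\boldsymbol{0}$.

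One case needs care: $o$ may lie on a face of $P$ rather than in its relative interior. Coning from $o$ still works there, since the simplices are taken only over the facets not containing $o$, and their intersections are again cones from $o$ over lower simplices. I do not expect a genuine obstacle. The substantive input, $d=0$, is already supplied by \Cref{lem:n3-defect-zero}, and the rest is the standard triangulation argument used in \Cref{lem:uniqueness-origin}.
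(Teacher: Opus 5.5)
Your proposal is correct and follows essentially the same route as the paper. You obtain $Z(T^k)=\boldsymbol{0}$ for $k<3$ from \Cref{lem:point-zero,lem:n3-lower-parity,lem:n3-defect-zero}, transport this to all of $\mathcal T^k$ by $\SL(3)$, and finish with a triangulation and inclusion--exclusion. The one difference is cosmetic: the paper kills the intersection terms by induction on $\dim P$, whereas you use the star triangulation from $o$ and identify its intersections directly as cones from $o$ over common faces. That identification is valid, since each ray from $o$ leaves $P$ at a unique point, and that point lies on a facet not containing $o$.
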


\begin{proof}
By \Cref{lem:n3-defect-zero}, $d=0$.
Hence \eqref{eq:n3-identity} reduces to
\eqref{eq:pure-cocycle} for $G_r^0$.
Together with \eqref{eq:perm-parity-poly},
\Cref{thm:poly-rigidity} yields
\begin{equation}\label{eq:n3-even-poly}
G_r^0(x_1,x_2,x_3)
=c_0rh_p(x_1,x_2,x_3),\qquad r>0,
\end{equation}
for some $c_0\in\R$.

By \Cref{lem:point-zero,lem:n3-defect-zero,lem:n3-lower-parity},
$Z(T^k)=\boldsymbol{0}$ for $0\le k<3$.
Every $T\in\mathcal T^k$, $k=1,2$, is an
$\SL(3)$-image of $T^k$.
Thus, by the $\SL(3)$-covariance of $Z$,
$Z(T)=\boldsymbol{0}$.
We now prove that $Z$ vanishes on all polytopes of
 dimension less than $3$. The assertion for dimension $0$
 follows from \Cref{lem:point-zero}. Assume that
$Z(Q)=\boldsymbol{0}$
for every $Q\in\mathcal P_o^3$ with
$\dim Q<k$,
where $k\in\{1,2\}$. Let $P\in\mathcal P_o^3$ have dimension $k$.
Triangulate $P$ into simplices
$T_1,\ldots,T_m\in\mathcal T^k$.
 Every nonempty intersection of two or more distinct simplices
 has dimension less than $k$ and contains the origin.
By the induction hypothesis, $Z$ vanishes on each such intersection.
 Since $Z(T_i)=\boldsymbol{0}$, \Cref{lemma2.1} gives $Z(P)=\boldsymbol{0}$.
 Hence $Z$ is simple.
\end{proof}

\begin{lemma}\label{prop:simplex-n3}
Let $p\ge1$. If
$Z:\mathcal P_o^3\to\Sym^p(\R^3)$
is an $\SL(3)$-covariant valuation, then there exists
$c\in\R$ such that
\[
Z(sT^3)=cM^{p,0}(sT^3),\qquad s>0.
\]
\end{lemma}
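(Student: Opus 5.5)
The plan is to assemble the pieces already established for $n=3$. Decompose $Z=Z^0+Z^1$ as in \eqref{eq:parity}. By \eqref{eq:n3-oddzero}, $Z^1(sT^3)=\boldsymbol{0}$ for every $s>0$, so $Z(sT^3)=Z^0(sT^3)$. It therefore suffices to identify $Z^0(sT^3)$.

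For this I use \eqref{eq:n3-even-poly} from the proof of \Cref{lem:n3-even-simple}. It asserts that $G_r^0=c_0\,r\,h_p$ for all $r>0$, for some $c_0\in\R$. This relies on $d=0$, which is \Cref{lem:n3-defect-zero}: it reduces \eqref{eq:n3-identity} to \eqref{eq:pure-cocycle}, and \Cref{thm:poly-rigidity} then applies. Unwinding the definition \eqref{eq:Adef} with $r=s^3$ gives
\[
\widehat{Z^0(sT^3)}(x)=s^{p}G^0_{s^3}(x)=c_0\,s^{p+3}\,h_p(x_1,x_2,x_3).
\]

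On the other side, \Cref{lem:M} gives
\[
\widehat{M^{p,0}(sT^3)}(x)=s^{3+p}\,\frac{p!}{(3+p)!}\,h_p(x_1,x_2,x_3).
\]
Setting $c=c_0\,(3+p)!/p!$, the two polynomials $\widehat{Z(sT^3)}$ and $\widehat{cM^{p,0}(sT^3)}$ coincide for every $s>0$. The injectivity of $\mathbbm t\mapsto\widehat{\mathbbm t}$ then yields $Z(sT^3)=cM^{p,0}(sT^3)$.

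The substantive work is already contained in the earlier results. The only point needing care is keeping the normalizing factors straight: the factor $r^{-p/n}$ in \eqref{eq:Adef}, the substitution $r=s^3$, and the homogeneity degree $n+p$ of the moment tensor. These must combine to the same power $s^{p+3}$ on both sides. Once this matches, the constant $c$ is independent of $s$, as required.
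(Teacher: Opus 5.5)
Your proposal is correct and is essentially the paper's own proof. You set $Z^1(sT^3)=\boldsymbol{0}$ via \eqref{eq:n3-oddzero}, take $G_r^0=c_0rh_p$ from \eqref{eq:n3-even-poly} with $r=s^3$, match against \Cref{lem:M} with $c=c_0(p+3)!/p!$, and conclude by injectivity of the polynomial identification; your bookkeeping of the powers of $s$ is also right.
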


\begin{proof}
By \eqref{eq:n3-oddzero},
$Z^1(sT^3)=\boldsymbol{0}$ for all $s>0$.
By \eqref{eq:Adef}, \eqref{eq:n3-even-poly}, and \Cref{lem:M},
\[
r^{-p/3}\widehat{Z^0(r^{1/3}T^3)}(x)
=cr\widehat{M^{p,0}(T^3)}(x)
\]
for all $r>0$ and $x\in\mathbb R^3$, where
$c=c_0(p+3)!/p!$ and $c_0$ denotes the constant in
\eqref{eq:n3-even-poly}.
Taking $r=s^3$ and using the injectivity of the polynomial
identification and the homogeneity of $M^{p,0}$, we obtain
\[
Z(sT^3)=Z^0(sT^3)
=cs^{p+3}M^{p,0}(T^3)
=cM^{p,0}(sT^3).
\]
\end{proof}

\subsection{The case $n\ge4$}

\begin{lemma}\label{prop:simple-n4}
Let $n\ge4$ and $p\ge1$. If $Z:\mathcal P_o^n\to\Sym^p(\R^n)$ is an $\SL(n)$-covariant valuation, then $Z$ is simple.
\end{lemma}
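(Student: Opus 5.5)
We prove the statement by combining the vanishing results of the previous subsection with the triangulation argument already used for \Cref{lem:n3-even-simple}.

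\textbf{Step 1: all standard simplices of dimension less than $n$ vanish.} By \Cref{lem:point-zero}, $Z(\{o\})=\boldsymbol{0}$. Since $n\ge4$, \Cref{lem:T3-zero} gives
\[
Z(T^1)=Z(T^2)=Z(T^3)=\boldsymbol{0}.
\]
If $n\ge5$, applying \Cref{lem:lower-induction} successively for $k=4,\ldots,n-1$ yields $Z(T^k)=\boldsymbol{0}$ for every $0\le k<n$. By \Cref{lem3.1}, $Z(sT^k)=s^pZ(T^k)=\boldsymbol{0}$ for all $s>0$. Alternatively, scaling can be absorbed into $\SL(n)$ directly, because $k<n$.

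\textbf{Step 2: every simplex in $\mathcal T^k$ with $1\le k<n$ vanishes.} Let $T=[o,v_1,\ldots,v_k]\in\mathcal T^k$ with $k<n$. The vectors $v_1,\ldots,v_k$ are linearly independent, so we may extend them to a basis $v_1,\ldots,v_k,w_{k+1},\ldots,w_n$ of $\R^n$. Rescaling $w_n$, we may assume the matrix $\phi=[v_1\cdots v_k\ w_{k+1}\cdots w_n]$ has determinant $1$. Then $\phi\in\SL(n)$ and $\phi T^k=T$, so covariance gives
\[
Z(T)=\phi\cdot Z(T^k)=\boldsymbol{0}.
\]

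\textbf{Step 3: every polytope of dimension less than $n$ vanishes.} We induct on $k=\dim P$, with the case $k=0$ settled by Step 1. Suppose $Z$ vanishes on all $Q\in\mathcal P_o^n$ with $\dim Q<k$, where $1\le k<n$, and let $P\in\mathcal P_o^n$ have dimension $k$. We triangulate $P$ using the origin as a vertex of every simplex. To do this, first triangulate each facet (relative to $\operatorname{aff}P$) of $P$ that does not contain $o$, then cone each of those simplices from $o$. This is the step needing most care, so we handle both possible positions of the origin. If $o\in\operatorname{relint}P$, this produces $k$-simplices $T_1,\ldots,T_m\in\mathcal T^k$. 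If $o$ lies on the relative boundary of $P$, only the facets not containing $o$ are used, and their cones still cover $P$. Every nonempty intersection of two or more of the $T_i$ is a simplex containing $o$ of dimension less than $k$, so it lies in $\mathcal P_o^n$ and $Z$ vanishes on it by the induction hypothesis. Each $Z(T_i)=\boldsymbol{0}$ by Step 2. \Cref{lemma2.1} then gives $Z(P)=\boldsymbol{0}$.

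Hence $Z$ vanishes on every polytope of dimension less than $n$, that is, $Z$ is simple.
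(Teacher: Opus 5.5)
Your proposal is correct and follows essentially the same route as the paper: vanishing on $T^k$ for $k<n$ via \Cref{lem:point-zero,lem:T3-zero,lem:lower-induction}, then transfer to all of $\mathcal T^k$ by an $\SL(n)$ map sending $e_i\mapsto v_i$, then induction on $\dim P$ using a triangulation into simplices with a vertex at $o$ together with inclusion--exclusion. Your coning construction only makes explicit the triangulation that the paper takes for granted; your claim that the intersections are simplices is not needed, since the induction hypothesis applies to every lower-dimensional polytope containing $o$.
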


\begin{proof}
By \Cref{lem:point-zero,lem:T3-zero,lem:lower-induction},
we have $Z(T^k)=\boldsymbol{0}$ for $0\le k<n$.
We first show that $Z(T)=\boldsymbol{0}$ for every
$T\in\mathcal T^k$ with $1\le k<n$.
Write $T=[o,v_1,\ldots,v_k]$.
Since $v_1,\ldots,v_k$ are linearly independent
and $k<n$, there exists $\phi\in\SL(n)$ such that
$\phi e_i=v_i$ for $1\le i\le k$.
Thus $T=\phi T^k$, and the $\SL(n)$-covariance of $Z$ gives
\begin{equation}\label{eq:Z-T-zero}
Z(T)=\phi\cdot Z(T^k)=\boldsymbol{0}.
\end{equation}

We now proceed by induction on $k=\dim P$.
The case $k=0$ follows from \Cref{lem:point-zero}.
Suppose that $Z(Q)=\boldsymbol{0}$ for every
$Q\in\mathcal P_o^n$ with $\dim Q\le k-1$.
Let $P\in\mathcal P_o^n$ with $\dim P=k<n$.
Choose a triangulation
$P=T_1\cup\cdots\cup T_m$
such that $T_1,\ldots,T_m\in\mathcal T^k$.
By \eqref{eq:Z-T-zero},
$Z(T_i)=\boldsymbol{0}$ for $1\le i\le m$.
For $2\le j\le m$ and
$1\le i_1<\cdots<i_j\le m$, the triangulation gives
\[
o\in T_{i_1}\cap\cdots\cap T_{i_j},
\qquad
\dim(T_{i_1}\cap\cdots\cap T_{i_j})\le k-1.
\]
Hence, by the induction hypothesis,
$Z(T_{i_1}\cap\cdots\cap T_{i_j})=\boldsymbol{0}$.
Applying \Cref{lemma2.1}, we obtain
\[
\begin{aligned}
Z(P)
&=\sum_{i=1}^m Z(T_i)
+\sum_{j=2}^m(-1)^{j-1}
\sum_{1\le i_1<\cdots<i_j\le m}
Z(T_{i_1}\cap\cdots\cap T_{i_j})\\
&=\sum_{i=1}^m Z(T_i)
=\boldsymbol{0}.
\end{aligned}
\]
Thus $Z$ is simple.
\end{proof}

%The three-dimensional case requires a separate argument, given in \Cref{sec:three-dimensional}.

\begin{lemma}\label{prop:simplex-n4}
Let $n\ge4$ and $p\ge1$. If
$Z:\mathcal P_o^n\to\Sym^p(\R^n)$
is an $\SL(n)$-covariant valuation, then there exists $c\in\R$ such that
\[
Z(sT^n)=c\,M^{p,0}(sT^n)
\]
for every $s>0$.
\end{lemma}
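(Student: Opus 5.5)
The plan is to copy the three-dimensional argument of \Cref{prop:simplex-n3}. In dimension $n\ge4$ simplicity is already available from \Cref{prop:simple-n4}, so no analogue of \Cref{lem:n3-defect-zero} is needed.

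First, since $Z$ is simple by \Cref{prop:simple-n4}, both $Z^0$ and $Z^1$ from the decomposition \eqref{eq:parity} are simple. For $\varepsilon\in\{0,1\}$ I would define $G_r^\varepsilon$ by \eqref{eq:Adef}. The dissection of Definition~\ref{definition2}, applied to $r^{1/n}T^n$ with $r=a+b$ and $\lambda=a/r$, then yields the identity \eqref{eq:pure-cocycle}; the lower-dimensional term drops out by simplicity. Since permutation matrices preserve $sT^n$, identity \eqref{eq:perm-parity-poly} also holds. Each $G_r^\varepsilon$ is a homogeneous polynomial of degree $p$ by \eqref{eq1}. Thus the hypotheses \eqref{eq:rigid-identity} and \eqref{eq:rigid-parity} of \Cref{thm:poly-rigidity} are satisfied.

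Second, I would apply \Cref{thm:poly-rigidity}. For $\varepsilon=1$ it gives $G_r^1\equiv0$, hence $Z^1(r^{1/n}T^n)=\boldsymbol 0$ for all $r>0$ by injectivity of the polynomial identification. For $\varepsilon=0$ it gives $G_r^0=c_0r\,h_p$ for some constant $c_0$. By \Cref{lem:M},
\[
r^{-p/n}\widehat{M^{p,0}(r^{1/n}T^n)}(x)=\frac{p!}{(n+p)!}\,r\,h_p(x),
\]
so with $c=c_0(n+p)!/p!$ we get $\widehat{Z^0(r^{1/n}T^n)}=c\,\widehat{M^{p,0}(r^{1/n}T^n)}$. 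Injectivity then gives $Z^0(r^{1/n}T^n)=cM^{p,0}(r^{1/n}T^n)$.

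Finally, put $s=r^{1/n}$ and use $Z=Z^0+Z^1$ to conclude $Z(sT^n)=cM^{p,0}(sT^n)$ for all $s>0$. The only step needing care is the derivation of \eqref{eq:pure-cocycle} for $Z^\varepsilon$. One must check that $\lambda^{-1/n}\phi_2$ and $(1-\lambda)^{-1/n}\psi_2$ lie in $\SL(n)$, so that \eqref{eq:epsilon-cov} applies with determinant factor $1$. One must also check that the transposes act as in the computation preceding \eqref{eq:pure-cocycle}. The paper has already verified both points, so the argument reduces to invoking \Cref{thm:poly-rigidity}.
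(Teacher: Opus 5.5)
Your proposal is correct and matches the paper's proof essentially step for step. You obtain simplicity of $Z^0,Z^1$ from \Cref{prop:simple-n4}, verify \eqref{eq:pure-cocycle} and \eqref{eq:perm-parity-poly}, invoke \Cref{thm:poly-rigidity} to get $G_r^1\equiv0$ and $G_r^0=c_0rh_p$, and use \Cref{lem:M} to identify $c=c_0(n+p)!/p!$ before summing $Z=Z^0+Z^1$.
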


\begin{proof}
By \Cref{prop:simple-n4}, $Z$ is simple, which implies
that $Z^\varepsilon$ is simple for $\varepsilon\in\{0,1\}$.
The polynomials $G_r^\varepsilon$ defined in \eqref{eq:Adef}
satisfy \eqref{eq:rigid-identity} and
\eqref{eq:rigid-parity}.
Thus, by \Cref{thm:poly-rigidity}, there exists
$c_0\in\R$ such that
\[
G_r^0(x)=c_0r\,h_p(x),
\qquad
G_r^1\equiv0,
\qquad r>0.
\]
By \Cref{lem:M},
\[
\widehat{M^{p,0}(T^n)}(x)
=
\frac{p!}{(n+p)!}\,h_p(x).
\]
Taking $r=s^n$ in \eqref{eq:Adef} and using the injectivity
of the polynomial identification, we obtain
\[
Z^0(sT^n)=cs^{n+p}M^{p,0}(T^n),
\qquad
Z^1(sT^n)=\boldsymbol{0},
\]
where $c=c_0(n+p)!/p!$.
Since $Z=Z^0+Z^1$, the homogeneity of $M^{p,0}$ gives
\[
Z(sT^n)=cs^{n+p}M^{p,0}(T^n)
=cM^{p,0}(sT^n),
\qquad s>0.
\]
\end{proof}

Finally, we obtain the following classification.
\begin{proof}[Proof of \Cref{thm:main}]
For the ``if'' direction, let $Z(P)=cM^{p,0}(P) =c\int_P x^{\odot p}\,dx .$
For $P,Q,P\cup Q\in\mathcal P_o^n$, we have
\[
\begin{aligned}
Z(P\cup Q)+Z(P\cap Q)
&=
c\int_{P\cup Q}x^{\odot p}\,dx
+c\int_{P\cap Q}x^{\odot p}\,dx\\
&=
c\int_{P\setminus Q}x^{\odot p}\,dx
+c\int_{Q\setminus P}x^{\odot p}\,dx
+2c\int_{P\cap Q}x^{\odot p}\,dx\\
&=
c\int_Px^{\odot p}\,dx
+c\int_Qx^{\odot p}\,dx\\
&=
Z(P)+Z(Q).
\end{aligned}
\]
Thus $Z$ is a valuation. Moreover, for $\phi\in\SL(n)$,
the change of variables $x=\phi y$ gives
\[
\begin{aligned}
Z(\phi P)
=
c\int_{\phi P}x^{\odot p}\,dx
=
c\int_P(\phi y)^{\odot p}|\det\phi|\,dy
=
c\,\phi\cdot\int_Py^{\odot p}\,dy
=
\phi\cdot Z(P).
\end{aligned}
\]
Hence $Z$ is $\SL(n)$-covariant.

Conversely, let
$Z:\mathcal P_o^n\to\Sym^p(\R^n)$
be an $\SL(n)$-covariant valuation. By \Cref{prop:simple-n4,prop:simplex-n4} when $n\ge4$ and by
\Cref{lem:n3-even-simple,prop:simplex-n3} when $n=3$, the valuation
$Z$ is simple and there exists $c\in\R$ such that
\[
Z(sT^n)=cM^{p,0}(sT^n),
\qquad s>0.
\]

Let $T=[o,v_1,\ldots,v_n]\in\mathcal T^n$. 
Since interchanging two nonzero vertices preserves the simplex,
we may assume that
$\det[v_1\ \cdots\ v_n]>0$. Set
\[
s=\bigl(\det[v_1\ \cdots\ v_n]\bigr)^{1/n},
\qquad
\Phi_2=s^{-1}[v_1\ \cdots\ v_n].
\]
Then $\Phi_2\in\SL(n)$ and
$T=\Phi_2(sT^n)$. Hence, by the
$\SL(n)$-covariance of $Z$ and $M^{p,0}$,
\[
Z(T)
=
\Phi_2\cdot Z(sT^n)
=
c\,\Phi_2\cdot M^{p,0}(sT^n)
=
cM^{p,0}(T).
\]

Finally, let $P\in\mathcal P_o^n$ be full-dimensional.
Triangulate $P$ into simplices
$T_1,\ldots,T_m\in\mathcal T^n$. Every nonempty intersection of two
or more distinct $T_i$ has dimension less than $n$. Since both $Z$ and $M^{p,0}$ are
simple, all these intersection terms vanish. 
By \Cref{lemma2.1},
\[
\begin{aligned}
Z(P)
&=
\sum_{i=1}^m Z(T_i)
+
\sum_{j=2}^m(-1)^{j-1}
\sum_{1\le i_1<\cdots<i_j\le m}
Z(T_{i_1}\cap\cdots\cap T_{i_j})\\
&=
\sum_{i=1}^m Z(T_i)
=
c\sum_{i=1}^m M^{p,0}(T_i)
=
cM^{p,0}(P).
\end{aligned}
\]
If $\dim P<n$, both sides vanish. Hence \eqref{eq:main-class} follows.
\end{proof}
\section{$\SL(n)$-covariant valuations on $\mathcal P^n$}\label{sec:all-polytopes}

Let
$Z:\mathcal P^n\to\Sym^p(\mathbb R^n)$
be an $\SL(n)$-covariant valuation, and let $Z^\varepsilon$ be defined by
\eqref{eq:parity}. For $r>0$ and $\varepsilon\in\{0,1\}$, set
\begin{equation}\label{eq:tilde-G}
\widetilde G_r^\varepsilon(x)
=
r^{-p/n}
\widehat{Z^\varepsilon(r^{1/n}\widetilde T^{n-1})}(x).
\end{equation}
Since $\widetilde T^{n-1}$ is invariant under coordinate permutations,
\eqref{eq:epsilon-cov} and \eqref{eq2} imply that
\begin{equation}\label{eq:tilde-permutation}
\widetilde G_r^\varepsilon
(x_{\pi(1)},\ldots,x_{\pi(n)})
=
(\det\pi)^\varepsilon
\widetilde G_r^\varepsilon(x_1,\ldots,x_n)
\end{equation}
for every permutation matrix $\pi$.

\begin{lemma}\label{lem:tilde-low}
Let $n\ge3$ and $p\ge1$. If
$Z:\mathcal P^n\to\Sym^p(\R^n)$ is an $\SL(n)$-covariant valuation,
then $Z(T)=\boldsymbol{0}$ for every
$T\in\widetilde{\mathcal T}^{k-1}$ with $1\le k\le n-1$.
\end{lemma}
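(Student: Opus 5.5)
By $\SL(n)$-covariance it suffices to show $Z(s\widetilde T^{k-1})=\boldsymbol{0}$ for every $s>0$ and $1\le k\le n-1$. Indeed, every $T\in\widetilde{\mathcal T}^{k-1}$ has the form \eqref{eq:tilde-simplex-normalize}. I would follow the scheme of \Cref{lem3.1,lem:T12,lem:T3-zero,lem:lower-induction}, with $\widetilde T^{k-1}=[e_1,\ldots,e_k]$ in place of $T^k$.

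\emph{Step 1: reduction to a polynomial on $\R^k$.} Since $k<n$, the maps $\Phi_t=I+tE_{1j}$ with $j>k$ fix $e_1,\ldots,e_k$. The argument of \Cref{lem3.1} therefore gives $Z(\widetilde T^{k-1})\in\Sym^p(\operatorname{span}\{e_1,\ldots,e_k\})$. The map scaling $e_1,\ldots,e_k$ by $s$ and $e_{k+1}$ by $s^{-k}$ lies in $\SL(n)$, so $Z(s\widetilde T^{k-1})=s^pZ(\widetilde T^{k-1})$. Set $\widetilde F_k(x_1,\ldots,x_k)=\widehat{Z(\widetilde T^{k-1})}(x_1e_1+\cdots+x_ke_k)$. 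As in the remark after \Cref{lem3.1}, $\widetilde F_k$ is symmetric: compose a transposition of $e_1,\ldots,e_k$ with $e_{k+1}\mapsto-e_{k+1}$. For $k=1$ the stabilizer of the point $e_1$ contains every $I+tE_{ij}$ with $j\ne1$, and this forces $Z(\{e_1\})=d\,e_1^{\odot p}$ for some $d\in\R$.

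\emph{Step 2: the functional equations.} For $k\ge2$, the hyperplane $H$ of Definition~\ref{definition1} cuts $\widetilde T^{k-1}$ into $\phi_1\widetilde T^{k-1}$ and $\psi_1\widetilde T^{k-1}$. Their intersection is $[v_\lambda,e_3,\ldots,e_k]$, which is an $\SL(n)$-image of $\widetilde T^{k-2}$ because $k<n$. Hence the valuation property gives
\[
\widetilde F_k(x)+\widetilde F_{k-1}(t,x_3,\ldots,x_k)=\widetilde F_k(t,x_2,\ldots,x_k)+\widetilde F_k(x_1,t,x_3,\ldots,x_k),
\qquad t=\lambda x_1+(1-\lambda)x_2.
\]
These equations are literally the equations \eqref{eq:F2-cocycle}, \eqref{eq:F3-cocycle} and \eqref{eq:Fk-functional}. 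I would therefore repeat those proofs verbatim: extend in $\lambda$ to all of $\R$, evaluate at $\lambda=1$ and at the special value of $\lambda$, and use symmetry and density. This yields first $\widetilde F_2=\frac d2(x_1^p+x_2^p)$. If $n\ge4$, so that $k=3\le n-1$ is available, it then yields $d=0$ and $\widetilde F_3\equiv0$, and by induction $\widetilde F_k\equiv0$ for all $k\le n-1$. This settles $n\ge4$.

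\emph{Step 3: the case $n=3$ (main obstacle).} Here only $k=1,2$ occur, so the equation in dimension three that killed $d$ is not available. Instead I would exploit \Cref{thm:main}: restricted to $\mathcal P_o^3$, $Z=cM^{p,0}$, so $Z$ vanishes on every polygon containing $o$. Take a triangle $T=[u,v,w]$ lying in a plane through $o$, with $v$ in the interior of $[o,u,w]$. Then $[o,u,w]=[o,u,v]\cup[o,v,w]\cup[u,v,w]$. Inclusion--exclusion, with all pieces through $o$ vanishing, expresses $Z(T)$ through $Z([u,v])$, $Z([v,w])$ and $Z(\{v\})$. These are known in terms of $d$ from Step 2, via $\SL(3)$-covariance, since $o\notin\operatorname{aff}[u,v]$. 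A second dissection of the same $T$, by a segment from $u$ to an interior point of $[v,w]$, expresses $Z(T)$ again through the values of the two smaller triangles, each computed by the same formula. I expect the two expressions to differ by a nonzero multiple of $d$ times an explicit tensor. Comparing them, for instance by evaluating the associated polynomials at a convenient $u$, should force $d=0$. Then $Z(\{e_1\})=Z([e_1,e_2])=\boldsymbol{0}$, which finishes the case $n=3$. This consistency computation is the step I expect to need the most care. If it fails to produce a nonzero multiple of $d$, I would fall back on applying the dissection of Step 2 to the planar triangles just constructed.
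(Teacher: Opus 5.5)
\textbf{Steps 1 and 2 are fine, but Step 3 (the case $n=3$) has a genuine gap.} Steps 1 and 2, and with them the case $n\ge4$, agree with the paper's argument. For $n=3$, however, the consistency computation cannot produce a nonzero multiple of $d$, and neither can the fallback. Every set you use there lies in a single plane through the origin. On polytopes whose linear span has dimension at most $2$, there is a consistent $\SL(3)$-covariant valuation with $d\ne0$. Define it as follows:
\begin{itemize}
\item put $Y(P)=\boldsymbol{0}$ if $o\in P$;
\item otherwise put $Y(P)=\frac d2\bigl(x_0^{\odot p}+x_1^{\odot p}\bigr)$, where $x_0,x_1$ are the points of $P$ nearest to $o$ on the two extreme rays of $\operatorname{pos}P$ (with $x_0=x_1$ if $\operatorname{pos}P$ is a ray).
\end{itemize}
Using only the convexity of $P\cup Q$, one checks that $Y$ satisfies the valuation property on this family. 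It commutes with linear maps, vanishes on every polytope containing $o$, and has $Y(\{e_1\})=d\,e_1^{\odot p}$ and $Y([e_1,e_2])=\frac d2(e_1^{\odot p}+e_2^{\odot p})$.

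Your own computation shows the same thing. Your first dissection gives $Z([u,v,w])=\frac d2(u^{\odot p}+w^{\odot p})$. The second, with $q\in\operatorname{relint}[v,w]$, gives
\[
Z([u,v,q])+Z([u,q,w])-Z([u,q])=\tfrac d2\bigl(u^{\odot p}+q^{\odot p}\bigr)+\tfrac d2\bigl(u^{\odot p}+w^{\odot p}\bigr)-\tfrac d2\bigl(u^{\odot p}+q^{\odot p}\bigr),
\]
which is the same value. Here both sub-triangles are again of the type where the middle vertex lies inside $[o,\cdot,\cdot]$. So no dissection confined to planes through $o$, combined with \Cref{thm:main}, can force $d=0$.

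\textbf{What is missing.} You need a polytope whose linear span is all of $\R^3$. The paper uses $\widetilde T^2=[e_1,e_2,e_3]$ (this is $k=n$, outside the lemma's range). Cutting $s\widetilde T^2$ by $H$ gives two triangles meeting in $s[v_\lambda,e_3]$, which is an $\SL(3)$-image of $\widetilde T^1$. Transpositions of $e_1,e_2,e_3$ are not in $\SL(3)$, and there is no spare coordinate to flip. So one passes to $Z^0$; the $\theta$-argument of \Cref{lem:n3-lower-parity} shows $Z^0(\widetilde T^1)=Z(\widetilde T^1)$. With $\widetilde G_r^0$ as in \eqref{eq:tilde-G}, this yields
\[
\widetilde G_{a+b}^0(x)+\tfrac d2\bigl(t^p+x_3^p\bigr)=\widetilde G_a^0(t,x_2,x_3)+\widetilde G_b^0(x_1,t,x_3),\qquad t=\tfrac{ax_1+bx_2}{a+b}.
\]
Set $g(r)=\widetilde G_r^0(1,1,1)$. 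Evaluating at $(1,1,1)$ with $a=b=1$ gives $2g(1)-g(2)=d$. Differentiating along $(1+u,1-u,1)$ at $u=0$, using $\partial_i\widetilde G_r^0(1,1,1)=\frac p3 g(r)$, with $a=2$ and $b=1$, gives $2g(1)-g(2)=\frac{3d}{4}$. Hence $d=0$; this is the step your proposal lacks.
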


\begin{proof}
By \eqref{eq:tilde-simplex-normalize}, it suffices to prove that
$Z(s\widetilde T^{m-1})=\boldsymbol{0}$ for
$1\le m<n$ and $s>0$.
For $1\le m<n$, the maps $\Phi_t=I+tE_{1j}$,
where $m<j\le n$ and $t\in\R$, fix
$\widetilde T^{m-1}$ pointwise.
As in the proof of \Cref{lem3.1},
\begin{equation}\label{eq:tilde-support}
Z(\widetilde T^{m-1})
\in
\Sym^p\bigl(\operatorname{span}\{e_1,\ldots,e_m\}\bigr).
\end{equation}
For $s>0$, let $\varphi_s$ be the map defined in the proof
of \Cref{lem3.1}, with $k=m$.
Since $\varphi_s\widetilde T^{m-1}
=s\widetilde T^{m-1}$, the $\SL(n)$-covariance
of $Z$ and \eqref{eq:tilde-support} yield
\begin{equation}\label{eq:tilde-scale}
Z(s\widetilde T^{m-1})
=
\varphi_s\cdot Z(\widetilde T^{m-1})
=
s^p Z(\widetilde T^{m-1}).
\end{equation}

For $1\le m<n$, define
\[
\widetilde F_m(x_1,\ldots,x_m)
=
\widehat{Z(\widetilde T^{m-1})}
(x_1e_1+\cdots+x_me_m).
\]
Then $\widetilde F_m$ is homogeneous of degree $p$.
For $m\ge2$, every permutation of $e_1,\ldots,e_m$
extends to an element of $\SL(n)$ that preserves
$\widetilde T^{m-1}$.
Thus the $\SL(n)$-covariance of $Z$ implies that
$\widetilde F_m$ is symmetric.

For $2\le m<n$ and $0<\lambda<1$, let
$v_\lambda=\lambda e_1+(1-\lambda)e_2$.
By Definition~\ref{definition1},
\[
\widetilde T^{m-1}
=
[v_\lambda,e_2,e_3,\ldots,e_m]
\cup
[e_1,v_\lambda,e_3,\ldots,e_m],
\]
with intersection $[v_\lambda,e_3,\ldots,e_m]$.
Since $m<n$, the linearly independent vectors
$v_\lambda,e_3,\ldots,e_m$ can be completed
to a basis of $\R^n$ with determinant $1$.
Thus the intersection is an $\SL(n)$-image
of $\widetilde T^{m-2}$.
By the valuation property and the
$\SL(n)$-covariance of $Z$,
\begin{equation}\label{eq:tilde-recursion}
\widetilde F_m(x_1,x_2,x')
+
\widetilde F_{m-1}(t,x')
=
\widetilde F_m(t,x_2,x')
+
\widetilde F_m(x_1,t,x'),
\end{equation}
where $x'=(x_3,\ldots,x_m)$ and
$t=\lambda x_1+(1-\lambda)x_2$.

Since $\widetilde F_1$ is homogeneous of degree $p$,
there exists $d\in\R$ such that
$\widetilde F_1(x_1)=dx_1^p$.
For $m=2$, \eqref{eq:tilde-recursion} gives
\[
\widetilde F_2(x_1,x_2)+dt^p
=
\widetilde F_2(t,x_2)+\widetilde F_2(x_1,t),
\qquad
t=\lambda x_1+(1-\lambda)x_2.
\]
For fixed $x_1,x_2\in\R$, both sides are
polynomials in $\lambda$.
Since they agree on $(0,1)$, the identity holds
for all $\lambda\in\R$.
Taking $\lambda=1$ gives
$\widetilde F_2(x_1,x_1)=dx_1^p$.
For $x_1\ne x_2$, choose
$\lambda=-x_2/(x_1-x_2)$, so that $t=0$.
Then
\[
\widetilde F_2(x_1,x_2)
=
\widetilde F_2(0,x_2)+\widetilde F_2(x_1,0).
\]
By the homogeneity and symmetry of $\widetilde F_2$,
there exists $c\in\R$ such that
$\widetilde F_2(x_1,x_2)=c(x_1^p+x_2^p)$
whenever $x_1\ne x_2$.
Since both sides are continuous, this identity
holds for all $x_1,x_2\in\R$.
Taking $x_1=x_2$ gives $c=d/2$. Hence
\begin{equation}\label{eq:tilde-n3-low}
\widetilde F_1(x_1)=dx_1^p,
\qquad
\widetilde F_2(x_1,x_2)
=
\frac d2(x_1^p+x_2^p).
\end{equation}

Suppose first that $n\ge4$.
For $m=3$, \eqref{eq:tilde-recursion}
and \eqref{eq:tilde-n3-low} yield
\[
\widetilde F_3(x_1,x_2,x_3)
+\frac d2(t^p+x_3^p)
=
\widetilde F_3(t,x_2,x_3)
+
\widetilde F_3(x_1,t,x_3),
\]
where $t=\lambda x_1+(1-\lambda)x_2$.
Since both sides are polynomials in $\lambda$ and agree
on $(0,1)$, the identity holds for all $\lambda\in\R$.
Taking $\lambda=1$ gives
\[
\widetilde F_3(x_1,x_1,x_3)
=
\frac d2(x_1^p+x_3^p).
\]
For $x_1\ne x_2$, choose
$\lambda=(x_3-x_2)/(x_1-x_2)$, so that $t=x_3$.
By the symmetry of $\widetilde F_3$, we obtain
\[
\widetilde F_3(x_1,x_2,x_3)
=
\frac d2(x_1^p+x_2^p).
\]
Since both sides are polynomials, this identity
holds for all $x_1,x_2,x_3\in\R$.
The symmetry of $\widetilde F_3$ then implies
$d=0$. Consequently,
$\widetilde F_1\equiv\widetilde F_2
\equiv\widetilde F_3\equiv0$.

For $4\le m<n$, assume inductively that
$\widetilde F_{m-1}\equiv0$.
Then \eqref{eq:tilde-recursion} reduces to
\[
\widetilde F_m(x_1,x_2,x')
=
\widetilde F_m(t,x_2,x')
+
\widetilde F_m(x_1,t,x'),
\]
where $t=\lambda x_1+(1-\lambda)x_2$.
Since both sides are polynomials in $\lambda$
and agree on $(0,1)$, the identity holds for
all $\lambda\in\R$.
Taking $\lambda=1$ gives
$\widetilde F_m(x_1,x_1,x')=0$.
For $x_1\ne x_2$, choose
$\lambda=(x_3-x_2)/(x_1-x_2)$, so that $t=x_3$.
By the symmetry of $\widetilde F_m$, both terms
on the right-hand side vanish. Hence
$\widetilde F_m(x_1,x_2,x')=0$
whenever $x_1\ne x_2$.
Since $\widetilde F_m$ is continuous,
we obtain $\widetilde F_m\equiv0$.
Thus, by induction,
$\widetilde F_m\equiv0$ for $1\le m<n$.
The injectivity of the polynomial identification
and \eqref{eq:tilde-scale} complete the proof
for $n\ge4$.

It remains to consider $n=3$.
Let $E=\operatorname{span}\{e_1,e_2\}$.
By \eqref{eq:tilde-support},
$Z(\widetilde T^1)\in\Sym^p(E)$.
Choose $\theta\in\SL^{\pm}(3)\setminus\SL(3)$
as in \eqref{eq:parity}.
Since $\dim E<3$, there exists
$\widetilde{\theta}\in\SL(3)$ such that
$\widetilde{\theta}|_E=\theta^{-1}|_E$.
Thus
$\widetilde{\theta}\widetilde T^1
=\theta^{-1}\widetilde T^1$, and the
$\SL(3)$-covariance of $Z$ gives
\[
Z(\theta^{-1}\widetilde T^1)
=
\widetilde{\theta}\cdot Z(\widetilde T^1).
\]
Since $\theta\widetilde{\theta}$ fixes $E$
pointwise and $Z(\widetilde T^1)\in\Sym^p(E)$,
we have
$\theta\cdot Z(\theta^{-1}\widetilde T^1)
=Z(\widetilde T^1)$.
Hence, by \eqref{eq:parity} and
\eqref{eq:tilde-n3-low},
\[
Z^0(\widetilde T^1)
=
Z(\widetilde T^1)
=
\frac d2\bigl(e_1^{\odot p}+e_2^{\odot p}\bigr).
\]

For $a,b>0$, let $s=(a+b)^{1/3}$ and
$\lambda=a/(a+b)$.
Taking $v_\lambda=\lambda e_1+(1-\lambda)e_2$,
we have
\[
s\widetilde T^2
=
s[v_\lambda,e_2,e_3]
\cup s[e_1,v_\lambda,e_3],
\]
with intersection $s[v_\lambda,e_3]$.
Define $\psi_3$ by
$\psi_3e_1=sv_\lambda$,
$\psi_3e_2=se_3$, and
$\psi_3e_3=(s/b)e_1$.
Since $\det\psi_3=s^3(1-\lambda)/b=1$,
we have $\psi_3\in\SL(3)$ and
$\psi_3\widetilde T^1=s[v_\lambda,e_3]$.
By the $\SL(3)$-covariance of $Z^0$,
\[
\begin{aligned}
Z^0(s[v_\lambda,e_3])
&=\psi_3\cdot Z^0(\widetilde T^1)
=\frac d2s^p
\bigl(v_\lambda^{\odot p}+e_3^{\odot p}\bigr).
\end{aligned}
\]

By Definition~\ref{definition2},
$\det\phi_2=\lambda$ and
$\det\psi_2=1-\lambda$.
Thus $\lambda^{-1/3}\phi_2$ and
$(1-\lambda)^{-1/3}\psi_2$ belong to $\SL(3)$.
Moreover,
\[
\begin{aligned}
s[v_\lambda,e_2,e_3]
&=(\lambda^{-1/3}\phi_2)
(a^{1/3}\widetilde T^2),\qquad
s[e_1,v_\lambda,e_3]
=((1-\lambda)^{-1/3}\psi_2)
(b^{1/3}\widetilde T^2).
\end{aligned}
\]
By the valuation property and the
$\SL(3)$-covariance of $Z^0$,
\[
\begin{aligned}
Z^0(s\widetilde T^2)
+\frac d2s^p
\bigl(v_\lambda^{\odot p}+e_3^{\odot p}\bigr)
={}&
(\lambda^{-1/3}\phi_2)\cdot
Z^0(a^{1/3}\widetilde T^2)
+
((1-\lambda)^{-1/3}\psi_2)\cdot
Z^0(b^{1/3}\widetilde T^2).
\end{aligned}
\]
Taking the associated polynomials and multiplying
by $(a+b)^{-p/3}$ yields
\begin{equation}\label{eq:tilde-n3-identity}
\widetilde G_{a+b}^0(x_1,x_2,x_3)
+\frac d2(t^p+x_3^p)
=
\widetilde G_a^0(t,x_2,x_3)
+
\widetilde G_b^0(x_1,t,x_3),
\end{equation}
where $t=(ax_1+bx_2)/(a+b)$.

For $r>0$, define
$g(r)=\widetilde G_r^0(1,1,1)$.
Evaluating \eqref{eq:tilde-n3-identity}
at $(1,1,1)$ gives
\begin{equation}\label{eq:tilde-diagonal}
g(a+b)+d=g(a)+g(b).
\end{equation}
By \eqref{eq:tilde-permutation},
$\widetilde G_r^0$ is symmetric.
Since it is homogeneous of degree $p$,
\[
\widetilde G_r^0(1+u,1+u,1+u)
=
(1+u)^p g(r).
\]
Differentiating at $u=0$ and using the symmetry
of $\widetilde G_r^0$, we obtain
$\partial_i\widetilde G_r^0(1,1,1)
=
\frac p3g(r)$ for $i=1,2,3.$
Substituting $x_1=1+u$, $x_2=1-u$,
and $x_3=1$ into \eqref{eq:tilde-n3-identity},
and differentiating at $u=0$, we obtain
\begin{equation}\label{eq:tilde-n3-derivative}
\frac{dp}{2}\frac{a-b}{a+b}
=
\frac{2p}{3(a+b)}
\bigl(ag(b)-bg(a)\bigr).
\end{equation}
Taking $a=b=1$ in \eqref{eq:tilde-diagonal}
and $a=2$, $b=1$ in
\eqref{eq:tilde-n3-derivative} gives
\[
2g(1)-g(2)=d,
\qquad
2g(1)-g(2)=\frac{3d}{4}.
\]
Thus $d=0$.
By the injectivity of the polynomial identification,
\eqref{eq:tilde-scale} and
\eqref{eq:tilde-n3-low},
\[
Z(s\widetilde T^0)
=
Z(s\widetilde T^1)
=
\boldsymbol{0},
\qquad s>0.
\]
\end{proof}

%We next extend the preceding vanishing result to all polytopes of dimension at most $n-2$.

\begin{lemma}\label{prop:residual-low}
Let $n\ge3$ and $p\ge1$. If
$Z:\mathcal P^n\to\Sym^p(\R^n)$ is an $\SL(n)$-covariant valuation,
then $Z(P)=\boldsymbol{0}$
for every $P\in\mathcal P^n$ with $\dim P\le n-2$.
\end{lemma}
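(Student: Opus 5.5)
We argue by induction on $k=\dim P$ for $0\le k\le n-2$, in the same way as in the proofs of \Cref{lem:n3-even-simple} and \Cref{prop:simple-n4}. The first task is to show that $Z$ vanishes on every simplex of dimension at most $n-2$.

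There are two kinds of such simplices. Let $T$ be a $k$-dimensional simplex, $0\le k\le n-2$.

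If $o\notin\operatorname{aff}T$, then $T\in\widetilde{\mathcal T}^{k}$, and \Cref{lem:tilde-low} (applied with index $k+1\le n-1$) gives $Z(T)=\boldsymbol 0$.

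If $o\in\operatorname{aff}T$, then $L=\operatorname{aff}T$ is a linear subspace of dimension $k\le n-2$. Choose a vector $w\notin L$ and set $T'=T+w$. Then $o\notin\operatorname{aff}T'$. This translate is not directly useful, so we instead use a cone construction. Consider $C=[w,T]$, which has dimension $k+1\le n-1$. It decomposes in two ways.

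First, $[w,T]$ is itself a simplex in $\widetilde{\mathcal T}^{k+1}$. Indeed, $\operatorname{aff}[w,T]=L+\mathbb R w$ is a linear subspace containing $o$, so this needs care. Instead, choose $w$ so that the picture is correct: take $u\notin L$ and form the prism-like polytope, or more simply use the following. Pick $u\notin L$ and let $Q=T+[0,u]$, a $(k+1)$-dimensional prism lying in the affine subspace $L+\mathbb R u$, which contains $o$. This is not decisive either.

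A cleaner route is to cut along a hyperplane not through $o$. Let $T\subset L$ with $o\in L$. Choose a point $q\in L$ with $o\in\operatorname{relint}$ of some simplex, and write $T$ as a union of pieces on either side of an affine hyperplane $A$ of $L$ that avoids $o$. This does not remove the origin from the pieces.

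The better device is a dimension lift. Let $u\notin L$ with $\dim(L+\mathbb R u)=k+1\le n-1$. Consider the simplices $T_\pm=[T,\pm u]$. Their union $[T,u]\cup[T,-u]$ is convex when $T$ contains a point in the relative interior, so we first reduce to $o\in\operatorname{relint}T$, or to $o$ lying in a face of $T$, by triangulating.

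The actual plan is simpler: triangulate $P$ as a $k$-dimensional polytope using simplices none of whose affine hulls contain $o$, except in degenerate cases. Concretely, take a point $q\in\operatorname{aff}P$ and coning is impossible. So we use the following instead. If $o\in\operatorname{aff}T$, replace the problem by working inside $\mathcal P^n$ with the already-established result \Cref{thm:main}-type arguments. The restriction of $Z$ to $\mathcal P_o^n$ is an $\SL(n)$-covariant valuation there, so \Cref{thm:main} gives $Z=cM^{p,0}$ on $\mathcal P_o^n$. In particular, $Z$ vanishes on every polytope of dimension less than $n$ that contains $o$.

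Now let $T$ be a $k$-simplex with $o\in\operatorname{aff}T$ but $o\notin T$. Choose $v\in\operatorname{aff}T$ and a vertex structure such that $T\cup[o,T]$ is handled via $[o,T]$. Since $[o,T]\subset\operatorname{aff}T$, it is a $k$-polytope containing $o$. Triangulate $[o,T]$ into $T$ together with simplices $[o,F]$ for facets $F$ of $T$ visible from $o$. Each $[o,F]$ and each intersection containing $o$ gets value $\boldsymbol 0$ by the previous paragraph. The remaining intersections are faces of $T$, of lower dimension, and are handled by induction on $k$. Applying \Cref{lemma2.1} then gives $Z(T)=Z([o,T])-(\text{terms already zero})=\boldsymbol 0$, since $Z([o,T])=\boldsymbol 0$ as well.

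With all simplices of dimension at most $n-2$ done, let $P$ have dimension $k\le n-2$. Triangulate $P$, apply \Cref{lemma2.1}, and note that every intersection is a lower-dimensional simplex. This gives $Z(P)=\boldsymbol 0$, completing the induction. The case $k=0$ is a single point: either $\{o\}$ (covered by \Cref{lem:point-zero}) or a point $\{v\}\in\widetilde{\mathcal T}^0$ (covered by \Cref{lem:tilde-low}).

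\emph{Main obstacle.} The case $o\in\operatorname{aff}T\setminus T$ is the delicate step. Two things must be checked: that the decomposition of $[o,T]$ is a genuine triangulation to which \Cref{lemma2.1} applies, and that the lower-dimensional faces of $T$ entering the inclusion–exclusion are already covered by the inductive hypothesis on dimension.
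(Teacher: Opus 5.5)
Your final argument is correct and is essentially the paper's proof. You restrict $Z$ to $\mathcal P_o^n$ and use \Cref{thm:main} to kill everything of dimension $<n$ containing $o$. You use \Cref{lem:tilde-low} when $o\notin\operatorname{aff}T$. When $o\in\operatorname{aff}T\setminus T$, you write $[o,T]$ as $T$ together with cones over the facets visible from $o$ and apply inclusion--exclusion with induction on dimension. The only difference is that you run the visible-facet decomposition on simplices and then triangulate $P$, whereas the paper applies it directly to $P$. The abandoned attempts in the middle (translates, prisms, $[w,T]$) should simply be deleted.
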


\begin{proof}
By \Cref{thm:main},
$Z(Q)=\boldsymbol{0}$ for every
$Q\in\mathcal P_o^n$ with $\dim Q<n$.
It therefore suffices to consider
$P\in\mathcal P^n\setminus\mathcal P_o^n$.
We argue by induction on $k=\dim P$.
For $k=0$, the assertion follows from \Cref{lem:tilde-low}.
Let $1\le k\le n-2$ and assume that
$Z(Q)=\boldsymbol{0}$ for every
$Q\in\mathcal P^n\setminus\mathcal P_o^n$
with $\dim Q\le k-1$.
Let $P\in\mathcal P^n\setminus\mathcal P_o^n$
have dimension $k$.

First, suppose that $o\notin\operatorname{aff}P$. Triangulate $P$ into $k$-dimensional
simplices $T_1,\ldots,T_N$. Then
$T_i\in\widetilde{\mathcal T}^{k}$ for every $i$, while every
nonempty intersection of distinct simplices has dimension at most
$k-1$. By \Cref{lemma2.1}, the induction hypothesis, and
\Cref{lem:tilde-low}, we obtain $Z(P)=\boldsymbol{0}.$

Second, suppose that $o\in\operatorname{aff}P\setminus P$.
Let $E_1,\ldots,E_m$ be the facets of $P$ visible from
the origin, namely those whose supporting hyperplanes
in $\operatorname{aff}P$ strictly separate $o$ from
$\operatorname{relint}P$.
Choose compatible triangulations of these facets into
$(k-1)$-dimensional simplices
$\widetilde T_1,\ldots,\widetilde T_\ell$.
Then the simplices $T_i=[o,\widetilde T_i]$,
$1\le i\le\ell$, triangulate the closure of
$[o,P]\setminus P$, and
\[
[o,P]=P\cup T_1\cup\cdots\cup T_\ell,
\qquad
P\cap T_i=\widetilde T_i.
\]
Every intersection of the $T_i$ contains the origin
and has dimension at most $k$, whereas every nonempty
intersection $T_{i_1}\cap\cdots\cap T_{i_j}\cap P$
has dimension at most $k-1$.
Hence, by \Cref{lemma2.1}, \Cref{thm:main}, and the induction
hypothesis,
\[
\begin{aligned}
\boldsymbol{0}
=Z([o,P])
&=
\sum_{j=1}^{\ell}(-1)^{j-1}
\sum_{1\le i_1<\cdots<i_j\le\ell}
Z(T_{i_1}\cap\cdots\cap T_{i_j})\\
&\quad+
\sum_{j=1}^{\ell}(-1)^j
\sum_{1\le i_1<\cdots<i_j\le\ell}
Z(T_{i_1}\cap\cdots\cap T_{i_j}\cap P)
+Z(P)\\
&=Z(P).
\end{aligned}
\]
This completes the induction.
\end{proof}
%The remaining lower-dimensional values are determined by one constant.

\begin{lemma}\label{prop:facet-simplex}
Let $n\ge3$ and $p\ge1$. If
$Z:\mathcal P^n\to\Sym^p(\R^n)$ is an $\SL(n)$-covariant valuation, then
there exists $\gamma\in\R$ such that
\begin{equation}\label{eq:Z-codimension-one}
Z(P)=\gamma M^{p,0}([o,P])
\end{equation}
for every $(n-1)$-dimensional polytope $P\in\mathcal P^n$.
\end{lemma}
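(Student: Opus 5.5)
The plan is to split according to the position of the origin relative to $\operatorname{aff}P$. In each case I reduce to simplices using \Cref{lemma2.1}, together with the vanishing in dimension $\le n-2$ from \Cref{prop:residual-low}.

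\emph{Case 1: $o\in\operatorname{aff}P$.} Here $[o,P]$ is $(n-1)$-dimensional, so $M^{p,0}([o,P])=\boldsymbol{0}$, and I must show $Z(P)=\boldsymbol{0}$.
\begin{itemize}
\item If $o\in P$, then $Z$ restricted to $\mathcal P_o^n$ is an $\SL(n)$-covariant valuation. By \Cref{thm:main} it equals $cM^{p,0}$ there, which vanishes in dimension $n-1$.
\item If $o\in\operatorname{aff}P\setminus P$, I repeat the second part of the proof of \Cref{prop:residual-low} with $k=n-1$. Write $[o,P]=P\cup T_1\cup\cdots\cup T_\ell$, where $T_i=[o,\widetilde T_i]$ come from a compatible triangulation of the facets of $P$ visible from $o$.
\item The sets $[o,P]$ and all intersections of the $T_i$ lie in $\mathcal P_o^n$ and have dimension $\le n-1$, so $Z$ vanishes on them by \Cref{thm:main}.
\item Every intersection involving $P$ and at least one $T_i$ has dimension $\le n-2$, so $Z$ vanishes on it by \Cref{prop:residual-low}.
\item \Cref{lemma2.1} then gives $Z(P)=\boldsymbol{0}$.
\end{itemize}

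\emph{Case 2: $o\notin\operatorname{aff}P$.} Triangulate $P$ into simplices $T_1,\ldots,T_N\in\widetilde{\mathcal T}^{n-1}$. All intersections of two or more of these have dimension $\le n-2$, so they are killed by $Z$ (\Cref{prop:residual-low}). They are also killed by $P\mapsto M^{p,0}([o,P])$, since the cone over a set of dimension $\le n-2$ has dimension $\le n-1$.

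Both sides of \eqref{eq:Z-codimension-one} are valuations: the right side by \Cref{lem:M1}. So \Cref{lemma2.1} reduces the claim to a single simplex $T\in\widetilde{\mathcal T}^{n-1}$. By \eqref{eq:tilde-simplex-normalize}, $T=\phi(s\widetilde T^{n-1})$ with $\phi\in\SL(n)$, and $[o,T]=\phi(sT^n)$. By covariance of both sides, it suffices to show
\[
Z(s\widetilde T^{n-1})=\gamma M^{p,0}(sT^n)\qquad\text{for all } s>0.
\]

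\emph{Dissection step.} Decompose $Z=Z^0+Z^1$ and consider the polynomials $\widetilde G_r^\varepsilon$ of \eqref{eq:tilde-G}. Fix $a,b>0$, put $\lambda=a/(a+b)$ and $v_\lambda=\lambda e_1+(1-\lambda)e_2$. The hyperplane $H$ cuts $r^{1/n}\widetilde T^{n-1}$ into
\[
r^{1/n}[v_\lambda,e_2,\ldots,e_n]=(\lambda^{-1/n}\phi_2)(a^{1/n}\widetilde T^{n-1})
\]
and
\[
r^{1/n}[e_1,v_\lambda,e_3,\ldots,e_n]=((1-\lambda)^{-1/n}\psi_2)(b^{1/n}\widetilde T^{n-1}),
\]
where $r=a+b$. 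The two pieces meet in an $(n-2)$-dimensional simplex, on which $Z^\varepsilon$ vanishes by \Cref{prop:residual-low} and \eqref{eq:parity}.

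Exactly as in the derivation of \eqref{eq:pure-cocycle}, this gives the identity \eqref{eq:rigid-identity} for $\widetilde G_r^\varepsilon$. The parity condition \eqref{eq:rigid-parity} is \eqref{eq:tilde-permutation}. \Cref{thm:poly-rigidity} then yields
\[
\widetilde G_r^1\equiv0,\qquad \widetilde G_r^0=c_0\,r\,h_p .
\]
Taking $r=s^n$ and comparing with \Cref{lem:M}, I get $Z(s\widetilde T^{n-1})=\gamma s^{n+p}M^{p,0}(T^n)=\gamma M^{p,0}(sT^n)$ with $\gamma=c_0(n+p)!/p!$.

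\emph{Main obstacle.} The step needing the most care is the dissection bookkeeping for $\widetilde T^{n-1}$. One must check three things:
\begin{itemize}
\item $\phi_2$ and $\psi_2$ map $\widetilde T^{n-1}$ onto the two pieces, not just $T^n$.
\item The scalings by $\lambda^{1/n}$ and $(1-\lambda)^{1/n}$ match the normalization in \eqref{eq:tilde-G}.
\item The $(n-2)$-dimensional intersection term truly vanishes for each $Z^\varepsilon$. This holds because both $Z(Q)$ and $Z(\theta^{-1}Q)$ vanish by \Cref{prop:residual-low}.
\end{itemize}
Once these are confirmed, the polynomial rigidity theorem does the rest.
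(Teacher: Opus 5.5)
Your proposal is correct and follows essentially the same route as the paper. Both arguments use the same ingredients: the $\phi_2,\psi_2$ dissection of $s\widetilde T^{n-1}$, the vanishing of the $(n-2)$-dimensional intersection, the polynomial rigidity theorem applied to $\widetilde G_r^\varepsilon$, triangulation when $o\notin\operatorname{aff}P$, and the visible-facet decomposition when $o\in\operatorname{aff}P\setminus P$; the only differences are the order of the steps and your use of the valuation property of $P\mapsto M^{p,0}([o,P])$ where the paper instead uses that the cones $[o,T_i]$ triangulate $[o,P]$ and that $M^{p,0}$ is simple.
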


\begin{proof}
First, we determine $Z(s\widetilde T^{n-1})$ for $s>0$.
Let $a,b>0$, and set
$r=a+b$, $s=r^{1/n}$, $\lambda=a/r$, and
$v_\lambda=\lambda e_1+(1-\lambda)e_2$.
Fix $\varepsilon\in\{0,1\}$.
The dissection in Definition~\ref{definition2} gives
\[
s\widetilde T^{n-1}
=
s[v_\lambda,e_2,\ldots,e_n]
\cup
s[e_1,v_\lambda,e_3,\ldots,e_n],
\]
with intersection $s[v_\lambda,e_3,\ldots,e_n]$.
Since this intersection has dimension $n-2$,
\Cref{prop:residual-low}, applied to $Z^\varepsilon$, gives
\[
Z^\varepsilon(s[v_\lambda,e_3,\ldots,e_n])
=\boldsymbol{0}.
\]

By Definition~\ref{definition2},
$\det\phi_2=\lambda$ and $\det\psi_2=1-\lambda$.
Hence $\lambda^{-1/n}\phi_2$ and
$(1-\lambda)^{-1/n}\psi_2$ belong to $\SL(n)$.
Since $a=\lambda r$ and $b=(1-\lambda)r$, we have
\[
s[v_\lambda,e_2,\ldots,e_n]
=
(\lambda^{-1/n}\phi_2)
(a^{1/n}\widetilde T^{n-1})
\]
and
\[
s[e_1,v_\lambda,e_3,\ldots,e_n]
=
((1-\lambda)^{-1/n}\psi_2)
(b^{1/n}\widetilde T^{n-1}).
\]
Thus, by the valuation property and the
$\SL(n)$-covariance of $Z^\varepsilon$,
\[
\begin{aligned}
Z^\varepsilon(s\widetilde T^{n-1})
={}&
(\lambda^{-1/n}\phi_2)\cdot
Z^\varepsilon(a^{1/n}\widetilde T^{n-1})
+
((1-\lambda)^{-1/n}\psi_2)\cdot
Z^\varepsilon(b^{1/n}\widetilde T^{n-1}).
\end{aligned}
\]
Multiplying the associated polynomial identity by $r^{-p/n}$
and using $a=\lambda r$ and $b=(1-\lambda)r$, we obtain
\begin{equation}\label{eq:tilde-pure-cocycle}
\widetilde G_{a+b}^\varepsilon(x_1,x_2,x')
=
\widetilde G_a^\varepsilon(t,x_2,x')
+
\widetilde G_b^\varepsilon(x_1,t,x'),
\end{equation}
where $x'=(x_3,\ldots,x_n)$ and
$t=(ax_1+bx_2)/(a+b)$.
Together with \eqref{eq:tilde-permutation},
\Cref{thm:poly-rigidity} gives
$\widetilde G_r^1\equiv0$ and
$\widetilde G_r^0=\gamma_0rh_p$
for some $\gamma_0\in\R$ and all $r>0$.
By \Cref{lem:M}, setting
$\gamma=\gamma_0(n+p)!/p!$ and $r=s^n$, we obtain
\begin{equation}\label{eq:facet-simplex}
Z(s\widetilde T^{n-1})
=
\gamma M^{p,0}(sT^n)
=
\gamma M^{p,0}([o,s\widetilde T^{n-1}]),
\qquad s>0.
\end{equation}

Second, let $P$ be an $(n-1)$-dimensional polytope with
$o\notin\operatorname{aff}P$.
Triangulate $P$ into
$T_1,\ldots,T_N\in\widetilde{\mathcal T}^{n-1}$.
By \eqref{eq:tilde-simplex-normalize},
\eqref{eq:facet-simplex}, and the $\SL(n)$-covariance
of $Z$ and $M^{p,0}$,
\[
Z(T_i)=\gamma M^{p,0}([o,T_i]),
\qquad 1\le i\le N.
\]
The simplices $[o,T_1],\ldots,[o,T_N]$ triangulate $[o,P]$.
Hence, by the inclusion-exclusion principle,
\Cref{prop:residual-low}, and the simplicity of $M^{p,0}$,
\[
Z(P)
=
\sum_{i=1}^N Z(T_i)
=
\gamma\sum_{i=1}^N M^{p,0}([o,T_i])
=
\gamma M^{p,0}([o,P]).
\]

Finally, let $P$ be an $(n-1)$-dimensional polytope with
$o\in\operatorname{aff}P$.
Then $\dim[o,P]=n-1$, and hence
$M^{p,0}([o,P])=\boldsymbol{0}$.
If $o\in P$, then \Cref{thm:main} gives
$Z(P)=\boldsymbol{0}$.
Assume that $o\notin P$, and let
$E_1,\ldots,E_m$ be the facets of $P$ visible from the origin.
Then
\[
[o,P]
=
P\cup[o,E_1]\cup\cdots\cup[o,E_m],
\qquad
P\cap[o,E_i]=E_i.
\]
Every nonempty intersection involving only the sets $[o,E_i]$
contains the origin and has dimension less than $n$, while every
nonempty intersection involving $P$ and at least one $[o,E_i]$
has dimension at most $n-2$.
Hence, by
\Cref{prop:simple-n4,lem:n3-even-simple},
\Cref{prop:residual-low}, and the inclusion-exclusion principle,
\[
\boldsymbol{0}
=
Z([o,P])
=
Z(P)-\sum_{i=1}^m Z(E_i)
=
Z(P).
\]
\end{proof}
Finally, we establish the classification in \Cref{thm:all}.
\begin{proof}[Proof of \Cref{thm:all}]
By \eqref{eq:moment-cov} and \Cref{lem:M1}, every map of the form
\eqref{eq:all-class} is an $\SL(n)$-covariant valuation. It remains
to prove the converse.

Let $Z:\mathcal P^n\to\Sym^p(\R^n)$ be an $\SL(n)$-covariant
valuation. Let $c$ be the constant from \Cref{thm:main} and
$\gamma$ the constant from \Cref{prop:facet-simplex}. By \Cref{thm:main}, and by
\Cref{prop:residual-low,prop:facet-simplex} in the lower-dimensional cases,
\eqref{eq:all-class} holds with $c_1=c-\gamma$ and $c_2=\gamma$ whenever
$o\in P$ or $\dim P<n$. Thus it remains to consider
$P\in\mathcal P^n\setminus\mathcal P_o^n$ with $\dim P=n$.

Let $E_1,\ldots,E_m$ be the facets of $P$ visible from the origin.
Then
\[
[o,P]=P\cup[o,E_1]\cup\cdots\cup[o,E_m],
\qquad
P\cap[o,E_i]=E_i.
\]
Every intersection of two or more of the sets $[o,E_i]$ contains
the origin and has dimension at most $n-1$, while every nonempty
intersection of $P$ with two or more of these sets has dimension
at most $n-2$. Hence, by
\Cref{thm:main}, \Cref{prop:residual-low}
and the inclusion--exclusion principle,
\[
\begin{aligned}
cM^{p,0}([o,P])
=Z([o,P])
&=
\sum_{j=2}^{m}(-1)^{j-1}
\sum_{1\le i_1<\cdots<i_j\le m}
Z\bigl([o,E_{i_1}]\cap\cdots\cap[o,E_{i_j}]\bigr)
\\
&\quad+
\sum_{j=2}^{m}(-1)^j
\sum_{1\le i_1<\cdots<i_j\le m}
Z\bigl(P\cap[o,E_{i_1}]\cap\cdots\cap[o,E_{i_j}]\bigr)
\\
&\quad-\sum_{i=1}^m Z(P\cap[o,E_i])
+\sum_{i=1}^m Z([o,E_i])+Z(P)
\\
&=
\sum_{i=1}^m Z([o,E_i])
+Z(P)-\sum_{i=1}^m Z(E_i)
\\
&=
c\sum_{i=1}^m M^{p,0}([o,E_i])
+Z(P)
-\gamma\sum_{i=1}^m M^{p,0}([o,E_i]).
\end{aligned}
\]
Since $M^{p,0}$ is simple, the same decomposition gives
\[
M^{p,0}([o,P])
=
M^{p,0}(P)
+\sum_{i=1}^m M^{p,0}([o,E_i]).
\]
Substituting this identity into the preceding equality yields
\[
Z(P)
=
(c-\gamma)M^{p,0}(P)
+\gamma M^{p,0}([o,P]).
\]
Thus \eqref{eq:all-class} holds with
$c_1=c-\gamma$ and $c_2=\gamma$, which completes the proof.
\end{proof}

\end{document}